\definecolor{linkblue}{RGB}{1,1,190}
\definecolor{citered}{RGB}{190,1,1}
\theoremstyle{plain}
\newtheorem{theorem}{\bf Theorem}[section]
\newtheorem{proposition}[theorem]{\bf Proposition}
\newtheorem{lemma}[theorem]{\bf Lemma}
\newtheorem{conjecture}[theorem]{\bf Conjecture}
\theoremstyle{definition}
\newtheorem{definition}[theorem]{\bf Definition}
\newtheorem{question}[theorem]{\bf Question}
\theoremstyle{definition}
\theoremstyle{definition}
\newtheorem{claim}{}[theorem]
\newtheorem{clam}{}[theorem]
\newcommand{\N}{\mathbb N}
\newcommand{\Z}{\mathbb Z}
\newcommand{\Q}{\mathbb Q}
\newcommand{\C}{\mathbb C}
 \DeclareMathOperator{\ord}{ord}
\DeclareMathOperator{\lcm}{lcm} 
 \DeclareMathOperator{\supp}{supp}
\DeclareMathOperator{\Hom}{Hom}
\DeclareMathOperator{\sep}{sep}
\renewcommand{\t}{\, | \,}
\newcommand{\be}{\begin{equation}}
\newcommand{\ee}{\end{equation}}
\newcommand{\ber}{\begin{eqnarray}}
\newcommand{\eer}{\end{eqnarray}}
\newcommand{\sepbeta}{\beta_{\sep}}
\numberwithin{equation}{section}
\begin{document}
\title[On separating sets]{On separating sets of polynomial invariants of finite abelian group actions}

\author{Barna Schefler and Kevin Zhao$^\ast$ and Qinghai Zhong}

\address{School of Mathematics and Statistics, Nanning Normal University, Nanning 530100, China, and Center for Applied Mathematics of  Guangxi, Nanning Normal University, Nanning 530100, China}
\email{zhkw-hebei@163.com}

\address{E\"otv\"os Lor\'and University,
P\'azm\'any P\'eter s\'et\'any 1/C, 1117 Budapest, Hungary}
\email{scheflerbarna@yahoo.com}

\address{University of Graz, NAWI Graz\\
	Department of Mathematics and Scientific Computing\\
	Heinrichstra{\ss}e 36\\
	8010 Graz, Austria}
\email{qinghai.zhong@uni-graz.at}
\urladdr{https://imsc.uni-graz.at/zhong/}

\thanks{$\ast$ Corresponding author}
\thanks{This research was funded in part by National Science Foundation
	of China Grant \#12301425, by the Austrian Science Fund (FWF) [grant DOI:10.55776/P36852] and by the Hungarian National Research, Development and Innovation Office,  NKFIH K 138828.
}

\subjclass[2020]{13A50, 11B75, 20D60}
\keywords{Separating set, Separating Noether number, Zero-sum sequences, Inverse zero-sum problems.}

\begin{abstract}
Let $G$ be a finite group acting on a finite dimensional complex vector space $V$ via linear transformations. Let $\mathbb{C}[V]^G$ be the algebra of polynomials that are invariant under the induced $G$-action on the polynomial ring $\mathbb{C}[V]$. A subset $S\subseteq\mathbb{C}[V]^G$ is a separating set if it separates the orbits of the group action. If $G$ is abelian, then there exist finite separating sets consisting of monomials. In this paper we investigate properties of separating sets from three different points of view, including
the minimal size of separating sets consisting of monomials, the exact value of the separating Noether number $\sepbeta(G)$ of abelian groups of rank $4$, and an inverse problem of $\sepbeta(G)$ for abelian groups of rank $2$.
\end{abstract}

\maketitle

\section{Introduction}
\subsection{Invariants for finite abelian group actions}
Let $V$ be an $n$-dimensional vector space over the complex field $\mathbb{C}$ and  let $G$ be a finite group. Suppose that $V$ is endowed with an action of $G$ via linear transformations (i.e. $V$ has a $\mathbb{C}G$-module structure). The $G$-action on $V$ induces a $G$-action on the coordinate ring $\mathbb{C}[V]$ of $V$:
\begin{center}
	for $g\in G$ and $f\in\mathbb{C}[V]$, we have: $g\cdot f(v)=f(g^{-1}\cdot v)$.
\end{center}
By the famous theorem of Noether \cite{N16}, the invariant algebra
\begin{equation*}
\mathbb{C}[V]^G=\{f\in\mathbb{C}[V]\colon f=g\cdot f \mbox{ for any }g\in G\}
\end{equation*}
is finitely generated by homogeneous polynomials that have degree at most $|G|$.

Studying properties of separating invariants became a popular topic within invariant theory in recent years \cite{Do17a, domokos-schefler, draisma-kemper-wehlau, dufresne, Ko-Kr10a, SZZ25a}. Recall that a subset $S\subseteq \mathbb{C}[V]^G$ is called a \emph{separating set} if the following holds:
\begin{center}
	 for any two distinct $v_1, v_2\in V$, there exists $f\in S$ such that $f(v_1)\neq f(v_2)$,\\ whenever
	 there is $h\in\mathbb{C}[V]^G$ such that $h(v_1)\neq h(v_2)$.
\end{center}
If $G$ is a \textit{finite} group, then  $S\subseteq \mathbb{C}[V]^G$ is a separating set if and only if it separates the orbits of the group action, in other words,
\begin{center}
	$Gv_1\neq Gv_2$ implies the existence of an $f\in S$ such that $f(v_1)\neq f(v_2)$.
\end{center}
Since generating sets are separating sets, Noether's theorem implies  that a finite separating set also exists. Therefore, it is natural to ask the following questions:
\begin{question}\label{q2}
    What is a sharp lower bound for the size of a separating set?
\end{question}
\begin{question}\label{q1}
    What is a sharp upper bound for the degrees of the polynomials appearing in a separating set?
\end{question}

One concept of main interest of this paper is the \emph{separating Noether number} $\sepbeta(G)$. It was introduced in \cite{Ko-Kr10a} in order to deal with Question \ref{q1}.
\begin{definition}\label{sepnoetherdef}
	Let $G$ be a finite group and let $V$ be a finite dimensional $\mathbb{C}G$-module.
Denote by ${\sepbeta}(G,V)$ the minimal positive integer $d$ such that $\mathbb{C}[V]^G$ contains a separating set consisting of homogeneous polynomials of degree at most $d$. The separating Noether number of the group $G$ is
\[
\sepbeta(G):=\sup\{{\sepbeta}(G,V) \colon V \text{ is a finite dimensional }\mathbb{C}G\text{-module}\}. \]
\end{definition}
Definition \ref{sepnoetherdef} was inspired by the definition of the \emph{Noether number} $\beta(G)$ introduced in \cite{Sc91a} as follows:
\begin{align*}
    \beta(G,V):&=\min\{d\in \mathbb{N}_0\colon \mathbb{C}[V]^G \text{ is generated by  polynomials of degree }\leq d\},\\
 \text{ and }\quad   \beta(G):&=\sup\{\beta(G,V)\colon V\text{ is a finite dimensional }\mathbb{C}G\text{-module}\}.
\end{align*}

Let $G$ be a finite abelian group and let $\widehat{G}_{add}=\Hom (G, \Q/\Z)$. Then $\widehat{G}_{add}\cong G$ is a finite abelian group and the action of $G$ on $V$ is diagonalizable, so there exists a basis $(x_1, \ldots, x_n)$ of the dual space $V^*$ of $V$  such that for every $i\in [1,n]$, we have  $g\cdot x_i=e^{2\pi i\chi_i(g)}x_i$ for some $\chi_i\in \widehat{G}_{add}$. Denote $\widehat{G}_V=\{\chi_1,\ldots, \chi_n\}$. Clearly, the assignment $V\mapsto \widehat{G}_V$ induces a bijection between the set of isomorphism classes of multiplicity-free $\mathbb{C}G$-modules and the power set $\mathscr{P}(\widehat{G}_{add})$ of $\widehat{G}_{add}$. Under this bijection, the regular representation is mapped to $\widehat{G}_{add}$.

For every finite abelian group $(H,+,0)$,  we denote by $\mathcal{F}(H)$ the free abelian monoid with basis $H$ and by $\mathcal B(H)$ the set of all zero-sum sequences over $H$. Write $\psi$ for the map from the set $\mathcal{M}[V]$ of monomials into $\mathcal{F}(\widehat{G}_{add})$ assigning to the monomial $x_1^{m_1}\ldots x_n^{m_n}$ the sequence containing $\chi_i$ with multiplicity $m_i$, that is $\psi(x_1^{m_1}\cdot\ldots\cdot x_n^{m_n})=\chi_1^{m_1}\cdot\ldots\cdot \chi_n^{m_n}\in \mathcal{F}(\widehat{G}_{add})$. It is well known that a monomial $x_1^{m_1}\cdot\ldots\cdot x_n^{m_n}$ is $G$-invariant if and only $\psi(x_1^{m_1}\cdot\ldots\cdot x_n^{m_n})\in\mathcal{B}(\widehat{G}_{add})$, and $\mathbb{C}[V]^G$ is spanned by the set $\mathcal{M}[V]^G$ of $G$-invariant monomials. Moreover, if $V$ is multiplicity-free, then $\psi$ is injective, so it gives an isomorphism between the monoids $\mathcal{M}[V]^G$ and $\mathcal{B}(\widehat{G}_{add})$, when $V$ is the regular $\C G$-module. As a consequence, $\beta(G) = \mathsf{D}(\widehat{G}_{add}) = \mathsf{D}(G)$, where the \textit{Davenport constant} $\mathsf{D}(G)$ is the maximal length of a minimal zero-sum sequence over $G$. One can check \cite{Cz-Do-Ge16} for more details.

\subsection{Main results}

The main aim of this paper is to analyze separating sets consisting of monomials, which we call \emph{monomial separating sets},  through Questions \ref{q2} and \ref{q1}.

In recent years, the separating Noether number has been studied a lot \cite{Do17a, domokos-schefler, domokos-schefler2, Sc24a, Sc25a, SZZ25a}.  Among others, the exact value of $\sepbeta(G)$ was given for groups of small order \cite{domokos-schefler,domokos-schefler2}, for non-commutative groups containing cyclic subgroups of index $2$ \cite{domokos-schefler2}, for direct sum of several copies of the cyclic group $C_n$ \cite{Sc24a}, and for $p$-groups \cite{SZZ25a}. Moreover, in \cite{SZZ25a} the authors gave the value of $\sepbeta(G)$ for finite abelian groups $G$ that have rank $2,3$ or $5$. Our result fills this missing gap between $3$ and $5$:
\begin{theorem}\label{thm:rank4}
	Let $G=C_{n_1}\oplus C_{n_2}\oplus C_{n_3}\oplus C_{n_4}$ be a finite abelian group of rank $4$ with $1<n_1\mid n_2\mid n_3\mid n_4$. Then
	$${\sepbeta}(G)=\frac{n_{2}}{p}+n_{3}+n_4,$$
	where $p$ is the minimal prime divisor of  $n_1$.
\end{theorem}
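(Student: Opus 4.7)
The plan is to establish the two inequalities $\sepbeta(G) \geq \frac{n_2}{p} + n_3 + n_4$ and $\sepbeta(G) \leq \frac{n_2}{p} + n_3 + n_4$ separately, in the spirit of the rank $2$, $3$ and $5$ cases treated in \cite{SZZ25a}. By Theorem \ref{sepmonoid}, for a multiplicity-free $\mathbb{C}G$-module $V$ the study of monomial separating sets transfers via the degree-preserving homomorphism $\psi$ to an equivalent combinatorial question about zero-sum sequences over $\widehat{G}\cong G$, so both bounds can be reformulated purely inside $\mathcal{B}(\widehat{G})$.

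For the lower bound I would exhibit a multiplicity-free $\mathbb{C}G$-module $V$ together with vectors $v_1\neq v_2$ in distinct $G$-orbits that no $G$-invariant monomial of degree less than $\frac{n_2}{p}+n_3+n_4$ distinguishes. Through $\psi$, this amounts to constructing a minimal zero-sum sequence in $\mathcal{B}(\widehat{G})$ of length exactly $\frac{n_2}{p}+n_3+n_4$ whose proper subsequences cannot be matched by zero-sum sequences built from a strictly smaller portion of its support. Extrapolating from the known rank $2$ witness, a natural candidate combines long repetitions of generators of $C_{n_3}$ and $C_{n_4}$ with a truncated contribution from $C_{n_2}$, the truncation being compensated by a single mixing element drawn from the unique order-$p$ subgroup of $C_{n_1}$; one then has to verify both atomicity and non-separability at lower degrees.

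For the upper bound I would show that any two distinct $G$-orbits in any finite-dimensional $\mathbb{C}G$-module are separated by some monomial invariant of degree at most $\frac{n_2}{p}+n_3+n_4$. After applying $\psi$, this reduces to proving that every atom in $\mathcal{B}(\widehat{G})$ of length at least $\frac{n_2}{p}+n_3+n_4+1$ decomposes, for separation purposes, into strictly shorter zero-sums. The strategy I would pursue is a descent to the subgroup $pG \leq G$ with invariants $n_1/p, n_2/p, n_3/p, n_4/p$, invoking the rank $2$ and rank $3$ cases of \cite{SZZ25a} inductively, combined with a separate analysis of the contribution of the quotient $G/pG$, which is elementary abelian of exponent $p$ and rank $4$.

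The main obstacle will be the tight bookkeeping in this upper bound. In ranks $\leq 3$ and in rank $5$, either a direct decomposition or a clean inductive reduction is available, whereas rank $4$ lies in a transitional regime where both the elementary abelian contribution from $G/pG$ and the reduced contribution from $pG$ must be controlled simultaneously without slack, so as to land exactly at $\frac{n_2}{p}+n_3+n_4$ rather than one more. I anticipate that the most delicate subcase will be $n_1=p$ prime, where $pG$ drops in rank and the straightforward induction breaks down; this subcase will likely require a separate counting argument on atoms meeting every maximal subgroup of $G$, tuned carefully to the asymmetry created by the minimal prime divisor $p$ of $n_1$.
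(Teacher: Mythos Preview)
Your outline has two concrete problems.

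First, the descent goes to the wrong subgroup. When $n_1>p$ the group $pG\cong C_{n_1/p}\oplus C_{n_2/p}\oplus C_{n_3/p}\oplus C_{n_4/p}$ still has rank $4$, so the rank-$2$ and rank-$3$ results from \cite{SZZ25a} are simply not available and your induction never starts outside the special case $n_1=p$ that you already flag as delicate. The paper instead descends to $n_sG=n_2G$, which has rank at most~$2$; Lemma~\ref{Davenport} then gives $\mathsf D(n_iG)=\mathsf D^*(n_iG)$ for every $i\in[2,4]$, which is exactly the hypothesis of Proposition~\ref{main}.2, and Theorem~\ref{thm:rank4} follows from that proposition in one line.

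Second, you misidentify the actual obstacle. Lemma~\ref{lemain} already supplies the upper bound $\sepbeta(G)\le \frac{n_2}{q}+n_3+n_4$ with $q$ the smallest prime of $n_2$, while Lemma~\ref{lowerbound} gives the lower bound with $p$ the smallest prime of $n_1$; since $n_1\mid n_2$ one has $q\le p$, and the rank-$4$ gap is precisely the possibility $q<p$. The proof of Proposition~\ref{main} closes it not through the quotient $G/pG$ but by a B\'ezout argument: assuming a separating atom $A$ with $|A|$ in the interval $\bigl(\frac{n_2}{p}+n_3+n_4,\,\frac{n_2}{q}+n_3+n_4\bigr]$, the auxiliary zero-sums $W^{(l)},V^{(l)}$ together with a congruence analysis modulo $n_2$ produce an integer $m<p$ (hence $\gcd(m,n_1)=1$) with $A^{m}\in\mathsf q(\mathcal B(G_0)_{|A|-1})$; a separate descent to $n_1G$ (rank $\le 3$, so Lemma~\ref{lemain} applies) yields $A^{n_1}\in\mathsf q(\mathcal B(G_0)_{|A|-1})$; writing $A=(A^{n_1})^{\lambda_1}(A^{m})^{\lambda_2}$ then gives the contradiction. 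Your plan contains none of these ingredients. As a minor point, the lower bound you sketch is already Lemma~\ref{lowerbound}, so no new witness construction is needed, and the defining condition for a separating atom is $A\notin\mathsf q(\mathcal B(G_0)_{|A|-1})$, a statement about shorter \emph{lengths}, not about smaller support as you wrote.
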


Inverse problems of zero-sum theory are classical topics in additive combinatorics \cite{bhs-p, GG03, GGG10, GS20, W10, S11}. The problem of this type asks for the structure of zero-sum sequences having some specific properties. For example, the inverse problem of $\beta(G)=\mathsf D(G)$ is to characterize the structure of minimal zero-sum sequences of length $\mathsf D(G)$ over $G$. Note that the inverse problem of $\beta(G)$ for rank $2$ groups was a giant task, that was solved in a series of five articles \cite{bhs-p, GG03, GGG10, Re07, W10}.
 In fact, the separating Noether number $\beta_{sep}(G)$ is the maximal length of a separating atom (which is a special minimal zero-sum sequence, see Definition \ref{groupatomdef}) over some subset $G_0\subseteq G$ (see Lemma \ref{md2}).
In Section \ref{sec:invprob}, we obtain an inverse result for $\beta_{sep}(G)$ that is a step towards the full characterization of separating atoms of maximal length over a rank $2$ abelian group. More precisely, we prove that:
\begin{theorem}\label{max2}
	Let $G=C_{n_1}\oplus C_{n_2}$ with $1<n_1\t n_2$ and
	let $A$ be a separating atom  with $|A|={\sepbeta}(G)$ and $|\supp(A)|\le 3$. Then there exist three elements $g_1,g_2,g_3\in G$ with $\ord(g_1)=\ord(g_2)=n_2$ and $\ord(g_3)=n_1$ such that  $\supp(A)=\{g_1,g_2,g_3\}$ and
 $g_i\not \in \langle g_j\rangle$ for any two distinct indexes $i,j\in [1,3]$.
\end{theorem}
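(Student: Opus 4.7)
My plan is to rule out the cases $|\supp(A)|\le 2$, determine the orders of the three support elements when $|\supp(A)|=3$, and finally verify the pairwise non-containment condition. Throughout I would use the explicit value of $\sepbeta(C_{n_1}\oplus C_{n_2})$ from the rank-$2$ calculation in \cite{SZZ25a}, in particular the fact that $\sepbeta(G)>n_2$ whenever $n_1>1$, together with standard atom-length bounds in the block monoid $\mathcal{B}(G)$.

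For the first phase, if $|\supp(A)|=1$, then $A=g^{\ord(g)}$ has length $\ord(g)\le n_2<\sepbeta(G)$, a contradiction. For $|\supp(A)|=2$ with $\supp(A)=\{h_1,h_2\}$, the case where $\langle h_1,h_2\rangle$ is cyclic gives $|A|\le n_2$ immediately. Otherwise $\langle h_1,h_2\rangle$ has rank $2$, and the support-$2$ hypothesis forces $d:=|\langle h_1\rangle\cap\langle h_2\rangle|\ge 2$; parameterizing the minimal atom $h_1^a h_2^b$ via a generator of $\langle h_1\rangle\cap\langle h_2\rangle$ and choosing the smallest admissible exponents, a direct calculation yields $|A|\le \max(\ord(h_1),\ord(h_2))\le n_2$, again contradicting $|A|=\sepbeta(G)$.

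For the second phase, write $\supp(A)=\{g_1,g_2,g_3\}$. The orders are determined by comparing $|A|=\sepbeta(G)$ with the atom-length bound in proper subgroups of $G$: if all three $g_i$ had order strictly less than $n_2$, then each order divides $n_2/p$, so $\supp(A)$ would lie in a subgroup of $G$ of exponent at most $n_2/p$ whose Davenport constant is bounded by $n_1+n_2/p-1<\sepbeta(G)$, a contradiction. Hence at least one $g_i$ (say $g_1$) has order $n_2$. A similar but finer argument, analyzing the zero-sum condition modulo $\langle g_1\rangle$, forces a second $g_i$ (say $g_2$) also to have order $n_2$, after which the extremal length $|A|=\sepbeta(G)$ pins down $\ord(g_3)=n_1$ by direct comparison with the explicit construction achieving $\sepbeta(G)$ from \cite{SZZ25a}.

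For the third phase, suppose for contradiction that $g_i\in\langle g_j\rangle$ for some distinct $i,j$. Then $\supp(A)\subseteq\langle g_j,g_k\rangle$ where $k$ is the remaining index, and $A$ is an atom in the block monoid of this rank-at-most-$2$ subgroup; I would show, by rewriting the $g_i$-block in terms of $g_j$ and analyzing the resulting zero-sum relation on $\{g_j,g_k\}$, that the atomicity of $A$ forces its length to be controlled by the phase-one $2$-support bound, yielding $|A|\le n_2<\sepbeta(G)$, the desired contradiction. The main obstacle I foresee lies in phase two, where the precise order-determination requires delicate combinatorial analysis based on the atomicity of $A$ and the length maximality $|A|=\sepbeta(G)$; this will likely require invoking detailed structural results from \cite{SZZ25a} on $\sepbeta$-extremal atoms in the rank-$2$ setting, particularly the interplay between the divisibility $n_1\mid n_2$ and the shape of the extremal exponent vector $(a_1,a_2,a_3)$.
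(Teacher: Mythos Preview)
Your proposal never uses the defining property of a separating atom, namely that $A\notin\mathsf{q}(\mathcal{B}(G_0)_{|A|-1})$ for $G_0=\supp(A)$; you treat $A$ throughout merely as a minimal zero-sum sequence of length $\sepbeta(G)$. This is the fundamental gap. It surfaces most clearly in Phase~3: if $g_i\in\langle g_j\rangle$, the sequence $A$ still has three-element support, and ``rewriting the $g_i$-block in terms of $g_j$'' does not produce a sequence over $\{g_j,g_k\}$ of the same length, nor does it preserve atomicity, so your reduction to the support-$2$ bound is not valid. The paper instead exploits the separating condition directly (Proposition~\ref{max}.2): one writes
\[
A=\bigl((xg_j)^{m_i}g_j^{\ord(g_j)-x_i}\bigr)\cdot\bigl(Bg_j^{m_j+x_i}\bigr)\cdot\bigl(g_j^{\ord(g_j)}\bigr)^{-1}
\]
in $\mathsf{q}(\mathcal{B}(G_0))$ and shows each factor lies in $\mathsf{q}(\mathcal{B}(G_0)_{|A|-1})$. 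For the middle factor this requires Proposition~\ref{main}.1, which establishes (via the machinery of the auxiliary sequences $W^{(l)},V^{(l)}$ and Claims~\ref{cl1}--\ref{WV}) that every zero-sum sequence over $G_0$ with support of size $\le r$ already lies in $\mathsf{q}(\mathcal{B}(G_0)_{|A|-1})$.

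Phase~2 also contains a concrete error: the implication ``order strictly less than $n_2$ $\Rightarrow$ order divides $n_2/p$'' is false (in $C_6\oplus C_{12}$ the element $3e_2$ has order $4$, which does not divide $n_2/p=6$), and the elements of order $<n_2$ do not form a subgroup in general. The order determination you flag as the main obstacle is exactly where the paper's effort lies: Lemma~\ref{invsh}.2 (from prior work) narrows the possibilities to either $\ord(g_i)=n_2$ for all $i$ or $\ord(g_1)=\ord(g_2)=n_2$, $\ord(g_3)=n_1$; and Proposition~\ref{max}.1, proved via a chain of claims (\ref{cl3}--\ref{cl:hzs}) that again use the separating property to force $|W^{(l)}|,|V^{(l)}|\ge|A|$, shows that in the all-$n_2$ case one must have $\tfrac{n_2}{n_1}\mid r-1=1$, a contradiction when $n_1<n_2$. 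None of this mechanism is present in your sketch, and the ``direct comparison with the explicit construction'' you propose for pinning down $\ord(g_3)$ cannot work: the extremal construction merely gives a lower bound for $\sepbeta(G)$ and says nothing about an arbitrary separating atom achieving that length.
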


\medskip
 \emph{For the remainder of the manuscript, let $G$ be a finite abelian group written additively, let $\widehat{G}_{add}=\Hom (G, \Q/\Z)$ be the dual group of $G$ written additively,  let $V$ be the regular $\C G$-module, and let $\psi$ be the isomorphism between the monoids $\mathcal M[V]^G$ and $\mathcal B(\widehat{G}_{add})$.}

\section{Preliminaries}\label{sec:separating-Davenport}

Let $\mathbb N$ denote the set of positive integers and $\mathbb{N}_0=\mathbb{N}\cup\{0\}$. For two real numbers $a, b \in \mathbb R$, we denote by  $[a, b] = \{ x \in
\mathbb Z \colon a \le x \le b\}$ the finite discrete interval. For $n \in \N$ we denote by $C_n$ the cyclic group of order $n$. Since $G$ is abelian, we have $G \cong C_{n_1} \oplus \ldots \oplus C_{n_r}$, where $r \in \N_0$ and $n_1, \ldots, n_r \in \N$ with $1 < n_1 \t \ldots \t n_r$. We call $r = \mathsf r (G)$ the {\it rank} of $G$, $n_r=\exp(G)$ the {\it exponent} of $G$, and a tuple $(e_1, \ldots, e_r)$ of nonzero elements of $G$ is said to be a {\it basis} if $G = \langle e_1 \rangle \oplus \ldots \oplus \langle e_r \rangle$ with $\langle e_i \rangle\cong C_{n_i}$.

By a   {\it monoid},  we  mean a commutative
semigroup with identity which satisfies the cancellation law (that
is, if $a,b ,c$ are elements of the monoid with $ab = ac$, then $b =
c$ follows). The multiplicative semigroup of non-zero elements of an
integral domain is a monoid.
Let $H$ be a monoid. We
denote by $H^{\times}$ the group of invertible elements of $H$, by  $\mathcal A (H)$  the   set of atoms (irreducible elements) of $H$, and
by $\mathsf q(H)$ the quotient group of $H$. If $H^{\times}=\{1\}$, we say $H$ is \emph{reduced}.

\subsection{Zero-sum sequences}\label{subsec:zss}
Let $G_0\subseteq G$ be a nonempty subset. We denote by $\langle G_0\rangle$ the group generated by $G_0$. In additive combinatorics, a \emph{sequence} over $G_0$ means a finite unordered sequence
with terms from $G_0$, where repetition is allowed.
Let \[S=g_1\ldots g_{\ell}=\prod_{g\in G_0}g^{\mathsf v_g(S)}\] be a sequence over $G_0$, where $\mathsf v_g(S)$ denotes the multiplicity of $g$ in $S$. In other words, sequences are elements of the multiplicatively written free abelian monoid $\mathcal F(G_0)$ with basis $G_0$. Let $T$ be another sequence over $G_0$. If $\mathsf v_g(T)\le \mathsf v_g(S)$ for every $g\in G_0$, then we say $T$ is a \emph{subsequence} of $S$ and denote it by $T\t S$. We also denote  $T^{-1}S=\prod_{g\in G_0}g^{\mathsf v_g(S)-\mathsf v_g(T)}$ the remaining sequence.
We call
\begin{align} \label{length}
  |S|  &= \ell = \sum_{g \in G} \mathsf v_g (S) \in \mathbb N_0 \text{ the {\it length} of $S$},\\ \notag
     \supp(S)&=\{g\in G_0\colon \mathsf v_g(S)\neq 0\} \text{ the {\it support} of }S,\\ \notag
     \sigma (S) & = \sum_{i = 1}^{\ell} g_i=\sum_{g\in G_0}\mathsf v_g(S)g\in G \text{ the {\it sum} of }S,  \\ \notag
	\Sigma (S) &= \Big\{ \sum_{i \in I} g_i
\colon \emptyset \ne I \subseteq \{1,\ldots,\ell\} \Big\} \text{ the {\it set of
		subsequence sums} of $S$}.
\end{align}
A sequence $S$ is called a {\it zero-sum sequence} \ if \ $\sigma (S) = 0$, and {\it zero-sum free} \ if \ $0 \notin \Sigma (S)$. It is easy to see that the set of all zero-sum sequences over $G_0$ forms a submonoid
\begin{center}
    $\mathcal{B}(G_0) := \{S \in \mathcal F (G_0) \colon \sigma (S)=0\} \subseteq \mathcal F (G_0)$.
\end{center}
For any subset $B\subseteq \mathcal{B}(G_0)$, we denote by $\langle B\rangle$ the quotient subgroup generated by $B$.  Obviously, $\langle B\rangle$ is a subgroup of the quotient group $\mathsf q(\mathcal{B}(G_0))$.

A nontrivial zero-sum sequence is called  {\it a minimal zero-sum sequence } or {\it an atom}  if its every proper subsequence is zero-sum free. It is easy to see that the set of all minimal zero-sum sequences over $G_0$ are exactly the set of atoms $\mathcal A(G_0):=\mathcal A(\mathcal B(G_0))$.
 Note that  if $A\in\mathcal{A}(G_0)$,  then $g^{-1}A$ is zero-sum free for any $g\mid A$.

The {\it Davenport constant} $\mathsf D(G_0)$ of the monoid $\mathcal{B}(G_0)$ is the maximal length of atoms over $G_0$, that is,
\[
\mathsf D(G_0)=\max \{|A|\colon A\in \mathcal A(G_0)\}\,.
\]
So for every zero-sum free sequence $S$ over $G$, we have $|S|\le \mathsf D(G)-1$. Suppose $G \cong C_{n_1} \oplus \ldots \oplus C_{n_r}$, where $r \in \N_0$ and $n_1, \ldots, n_r \in \N$ with $1 < n_1 \t \ldots \t n_r$. Let $\mathsf D^*(G)=1+\sum_{i=1}^r(n_i-1)$ and let $(e_1,\ldots,e_r)$ be a basis of $G$ with $\ord(e_i)=n_i$. Then the sequence $X:=e_1^{n_1-1}\cdot\ldots\cdot e_r^{n_r-1}$ is zero-sum free and hence $$\mathsf D(G)\ge |X|+1=1+\sum_{i=1}^r(n_i-1)=\mathsf D^*(G)\,.$$


We have the following well-known result.
\begin{lemma}[{\cite[Theorem 4.2.10]{Ge09a}}]
	\label{Davenport}
	If $\mathsf r(G)\le 2$, then
\begin{equation}\label{eq:d=d*}
\mathsf D(G)=\mathsf D^*(G).
\end{equation}

\end{lemma}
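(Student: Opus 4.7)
The lower bound $\mathsf D(G) \geq \mathsf D^*(G)$ was exhibited by the zero-sum free sequence $X = e_1^{n_1-1} \cdots e_r^{n_r-1}$ constructed just before the statement, so the task is to prove the reverse bound $\mathsf D(G) \leq \mathsf D^*(G)$. I would split according to $\mathsf r(G) \in \{0,1,2\}$; the rank $0$ case is immediate since then $\mathsf D(G) = \mathsf D^*(G) = 1$.

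For $\mathsf r(G) = 1$, say $G \cong C_n$, I would run the classical pigeonhole on partial sums: for any sequence $g_1 \cdots g_n$ over $G$, the $n+1$ partial sums $0,\, g_1,\, g_1 + g_2,\, \ldots,\, g_1 + \cdots + g_n$ cannot be pairwise distinct in $C_n$, so two of them coincide and their difference is a non-empty zero-sum subsequence. Hence every sequence of length $n$ contains a non-empty zero-sum subsequence, giving $\mathsf D(C_n) \leq n = \mathsf D^*(C_n)$.

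For $\mathsf r(G) = 2$, i.e.\ $G \cong C_{n_1} \oplus C_{n_2}$ with $1 < n_1 \mid n_2$, I would argue by induction on $|G|$, with the rank-$1$ case as the base. For the inductive step, fix a prime $p \mid n_1$ and choose a subgroup $H \leq G$ of order $p$ so that $G/H$ retains rank at most $2$ with invariant factors $n'_1 \mid n'_2$; the inductive hypothesis then yields $\mathsf D(G/H) \leq \mathsf D^*(G/H)$. Given any sequence $S$ over $G$ with $|S| \geq \mathsf D^*(G)$, one iteratively extracts subsequences $T_1, T_2, \ldots \mid S$ whose images in $G/H$ are minimal zero-sum sequences, so that each residue $\sigma(T_j)$ lies in $H \cong C_p$. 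Once sufficiently many residues have been collected, the rank-$1$ bound $\mathsf D(H) = p$ furnishes a non-empty zero-sum subsum of the residues, which lifts to a non-empty zero-sum subsequence of $S$ in $G$, contradicting zero-sum freeness.

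The principal obstacle is the length bookkeeping inside the inductive step: the subgroup $H$ must be chosen so that the drop $\mathsf D^*(G) - \mathsf D^*(G/H)$ matches the total length consumed by the extractions $T_1, T_2, \ldots$ plus the $p$ residues needed to invoke the cyclic case. This drop depends subtly on how the invariant factors of $G/H$ differ from those of $G$ --- in particular on whether $p$ divides $n_1$ only or also higher powers of $p$ divide $n_2$ --- so several sub-cases of the $p$-adic arithmetic of $(n_1, n_2)$ have to be analyzed to make the iteration close. This arithmetic of invariant factors under prime-order quotients, combined with the verification that the prescribed extractions fit within $|S| = \mathsf D^*(G)$, forms the technical core of \cite[Theorem 4.2.10]{Ge09a}.
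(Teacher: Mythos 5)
The paper does not prove this lemma at all: it is quoted verbatim from the literature (\cite[Theorem 4.2.10]{Ge09a}, going back to Olson and to van Emde Boas--Kruyswijk), so there is no internal argument to compare against. Your lower bound and your treatment of the cases $\mathsf r(G)\le 1$ are correct and complete: the zero-sum free sequence $e_1^{n_1-1}\cdot\ldots\cdot e_r^{n_r-1}$ gives $\mathsf D(G)\ge \mathsf D^*(G)$, and the pigeonhole on the $n+1$ partial sums gives $\mathsf D(C_n)\le n=\mathsf D^*(C_n)$.

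The rank-$2$ step, however, contains a genuine gap that cannot be repaired by the ``length bookkeeping'' you defer to. If $H\le G$ has order $p$ and you extract subsequences $T_1,T_2,\ldots$ of $S$ whose images in $G/H$ are minimal zero-sum sequences, the only a priori bound on $|T_j|$ is $\mathsf D(G/H)$, you need $p$ residues $\sigma(T_j)\in H\cong C_p$ before $\mathsf D(C_p)=p$ applies, and each new extraction requires the remaining sequence to still have length at least $\mathsf D(G/H)$; so in the worst case the scheme needs $|S|\ge p\cdot \mathsf D^*(G/H)$. For $G=C_{n_1}\oplus C_{n_2}$ the two possible quotients by an order-$p$ subgroup give $p\cdot \mathsf D^*(G/H)=n_1+pn_2-p$ or $pn_1+n_2-p$, and both exceed $\mathsf D^*(G)=n_1+n_2-1$ whenever $n_1\ge 2$; no choice of $p$ or of $H$ closes the count, so this is not a matter of analyzing sub-cases of the $p$-adic arithmetic of $(n_1,n_2)$. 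The classical proof is of a different shape: it first establishes $\mathsf D(G)=\mathsf D^*(G)$ for $p$-groups by the group-algebra (polynomial) method, and for rank $2$ it additionally needs control of \emph{short} zero-sum subsequences, e.g.\ the fact that every sequence of length $3p-2$ over $C_p\oplus C_p$ has a nonempty zero-sum subsequence of length at most $p$ (the constant $\eta(C_p\oplus C_p)=3p-2$). Without importing ingredients of this kind the inductive step does not close, so the rank-$2$ case remains unproved in your proposal; since the paper itself only cites the result, the honest options are to do likewise or to reproduce Olson's argument in full.
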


In fact, equality \ref{eq:d=d*} also holds  for $p$-groups and some other special groups. However it is still not known for groups of rank $3$ and for groups of the form $C_n^r$ whether equality \ref{eq:d=d*} holds. On the other hand, there are infinitely many groups $G$ with rank larger than $3$ such that $\mathsf D(G)>\mathsf D^*(G)$ (\cite{C20,B69, GS92, M92}). For more on the Davenport constant, one can see \cite{GG06,Ge09a,Ge-HK06a,[G], Gi18, GS19, GS20,PS11,CS14,S11}.

\subsection{The separating Noether number and separating atoms}\label{subsec:bsep}

Let $H$ be a submonoid of $\mathcal M[V]^G$. If $H$ itself is a separating set of $\C[V]^G$, we say $H$ is a \emph{separating} monoid. Since $\mathcal M[V]^G$ is isomorphic to $\mathcal B(\widehat{G}_{add})$, we consider submonoids $\mathcal M$ of $\mathcal B(\widehat{G}_{add})$ and have the following results.

\begin{lemma}\label{lem:dommain}
Let $G_0$ be a subset of $\widehat{G}_{add}$. Then the following conditions are equivalent for a submonoid $\mathcal{M}$ of $\mathcal{B}(G_0)$:
\begin{itemize}
    \item [(i)] For all $G_1 \subseteq G_0$ we have $\mathsf{q}(\mathcal{M \cap B}(G_1)) \supseteq \mathcal{B}(G_1)$.
    \item [(ii)] For all $G_1 \subseteq G_0$ with $|G_1| \leq \mathsf{r}(G) + 1$ we have $\mathsf{q}(\mathcal{M \cap B}(G_1)) \supseteq \mathcal{B}(G_1)$.
    \item[(iii)] $\mathcal{M}=\psi(S)$ for a subset $S\subseteq\mathcal{M}[V]^G$ that is a separating set for $\mathbb{C}[V]^G$.
 \end{itemize}
\end{lemma}
\begin{proof}
This is a reformulation of \cite[Theorem 2.1]{Do17a}, using also \cite[Corollary 2.3]{domokos-szabo}
\end{proof}

Motivated by Lemma \ref{lem:dommain}, we have the following  definition.

\begin{definition}\label{def:sepmonoid}
    For a subset $G_0\subseteq G$, we call a submonoid $\mathcal{M}$ of $\mathcal{B}(G_0)$ \textit{separating} if for all $G_1 \subseteq G_0$, we have $\mathsf{q}(\mathcal{M \cap B}(G_1)) \supseteq \mathcal{B}(G_1)$.
\end{definition}

Let $\mathcal M$ be a submonoid of $\mathcal B(\widehat{G}_{add})$. Then by Lemma \ref{lem:dommain}, we see that $\mathcal M$ is a separating monoid if and only if there exists a separating submonoid $H$ of $\mathcal{M}[V]^G$ such that $\mathcal M=\psi(H)$. Next we collect some monoid theoretical properties of separating monoids.

\begin{proposition}\label{sepmonoid} For a separating monoid $H\subseteq \mathcal{M}[V]^G$, the following hold.
	\begin{enumerate}
		\item $\mathsf{q}(H)=\mathsf{q}(\mathcal{M}[V]^G)\cong\mathbb{Z}^n$.
		\item  The integral closure $\Tilde{H}$ of $H$ contains $\mathcal{M}[V]^G$.
		\item $H$ is a reduced $C$-monoid (see \cite[Chapters 2 and 3]{Ge-HK06a} for $C$-monoid)
		\item $H$ is Krull if and only if $H=\mathcal{M}[V]^G$.
	\end{enumerate}
\end{proposition}
\begin{proof}
	1. See \cite[Theorem 2.1]{Do17a}.
	
	2.  The normalization of a finitely generated graded separating
subalgebra $\mathbb{C}[V]^G$ (where $G$ is any reductive group) is $\mathbb{C}[V]^G$ (see e.g. \cite[Theorem 2.3.12]{derksen-kemper}). On the other hand,
the normalization of $\mathbb{C}[H]$ is $\mathbb{C}[\Tilde{H}]$ (see \cite[Theorem 4.39]{bruns-gubeladze}).

	3. Since $\mathcal{M}[V]^G$ is reduced by \cite[Proposition 4.7.2]{Cz-Do-Ge16}, we obtain that $H\subseteq \mathcal{M}[V]^G$ is reduced. Since $\mathscr{C}(\widehat{H})=\mathscr{C}(\mathcal{M}[V]^G)$ is finite by \cite[Proposition 4.8.4]{Cz-Do-Ge16}, it follows from \cite[Proposition 4.8]{Ge-Ha08a} that $H$ is a $C$-monoid.
	
	4. The assertion follows from Part 2.
\end{proof}

Note that $\beta(G)=\mathsf D(G)$. Since $\mathcal M[V]^G$ is isomorphic to $\mathcal B(\widehat{G}_{add})$ and hence isomorphic to $\mathcal B(G)$, it is natural to seek a combinatorial characterization of the separating Noether number by  Lemma \ref{lem:dommain}.  Here we follow a reformulated version of the original approach (the original version can be found in \cite[Section 2]{Do17a}, and the reformulated one in \cite[Section 2]{Sc24a}).

Given a monoid $H$ and any subset $H_0\subseteq H$, denote by $[H_0]$ the submonoid generated by $H_0$. Let $G_0\subseteq G$. For any $\ell\in\mathbb{N}$ we define the submonoid
\[\mathcal{B}(G_0)_{\ell}:=[A\in\mathcal{A}(G_0):|A|\leq \ell]\subseteq\mathcal{B}(G_0).\]

\begin{definition}\label{groupatomdef}
For a subset $G_0$ of $G$, we set
\[\mathcal{A}_{\sep}(G_0):=\{A\in\mathcal{A}(G_0):A\notin \mathsf{q}(\mathcal{B}(G_0)_{|A|-1})\}\subseteq \mathcal{A}(G_0).\]
The elements of $\mathcal A_{\sep}(G_0)$ are called \emph{separating atoms} over $G_0$. In particular, we simply say $A$ is a separating atom if $A$ is a separating atom over $\supp(A)$.
\end{definition}
\noindent
Note that the atoms of the monoid $[\mathcal{A}_{\sep}(G_0)]$ are exactly the separating atoms $\mathcal{A}_{\sep}(G_0)$ over $G_0$. Moreover,
\begin{equation}\label{generatingsubset}
	\text{ $\mathsf{q}(\mathcal{B}(G_0))$ is generated as a group by the elements of $\mathcal{A}_{\sep}(G_0)$.  }
\end{equation}
We let
\[
\mathsf D([\mathcal{A}_{\sep}(G_0)])=\max\{|A|\colon A\in \mathcal{A}_{\sep}(G_0)\}\,.
\]
One can express $\sepbeta(G)$ in terms of zero-sum sequences in the following way:

\begin{lemma}[{\cite[Corollary 2.6.]{Do17a}}]\label{md2}
The number ${\sepbeta}(G)$ is the maximal length of an element in $\mathcal{A}_{\sep}(G_0)$, where $G_0$ ranges over all subsets of size $k\leq {\mathsf r}(G)+1$ of the abelian group $G$:
\[{\sepbeta}(G)=\underset{\underset{|G_0|\leq {\mathsf r}(G)+1}{G_0\subseteq G}}{\mathrm{max}}\mathsf{D}([\mathcal{A}_{\sep}(G_0)]).\]
\end{lemma}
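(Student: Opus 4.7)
The plan rests on the known evaluation $\sepbeta(C_{n_1}\oplus C_{n_2}) = n_2 + n_1/p$ (the rank-$2$ case addressed in \cite{SZZ25a}), so throughout $|A|=n_2+n_1/p > n_2 = \exp(G)$. First, I would rule out small support sizes and show $\langle\supp(A)\rangle = G$. If $|\supp(A)|=1$ then $A = g^{\ord(g)}$ has $|A|\le n_2$, a contradiction. If $|\supp(A)|=2$, a folklore bound (derived by analyzing the minimal positive relation $mg_1=sg_2$ with $m=\ord(g_1\bmod\langle g_2\rangle)$ and $0<s<\ord(g_2)$, so that $|A|=m+\ord(g_2)-s$) shows that any support-$2$ atom has length at most $\exp(G)=n_2$, again a contradiction. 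Hence $|\supp(A)|=3$; write $\supp(A)=\{g_1,g_2,g_3\}$ and $H:=\langle g_1,g_2,g_3\rangle$. If $H$ is cyclic, $|A|\le\mathsf D(H)\le n_2<|A|$, impossible, so $\mathsf r(H)=2$ and $H\cong C_{m_1}\oplus C_{m_2}$ with $1<m_1\mid m_2$, $m_1\mid n_1$, $m_2\mid n_2$. Since $A$ is a separating atom over $\supp(A)\subseteq H$, Lemma \ref{md2} applied to $H$ gives $|A|\le\sepbeta(H)=m_2+m_1/p'$, where $p'$ is the smallest prime divisor of $m_1$. Every prime of $m_1$ divides $n_1$, so $p'\ge p$, whence $m_1/p'\le n_1/p$; together with $m_2\le n_2$ this yields $\sepbeta(H)\le\sepbeta(G)=|A|$, and equality forces $m_1=n_1$, $m_2=n_2$, i.e.\ $H=G$.

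Next, I prove that $g_i\notin\langle g_j\rangle$ for any $i\ne j$. Suppose, for contradiction, $g_3 = kg_1$ with $1\le k<\ord(g_1)$. Set $D:=g_1^{a_1+ka_3}g_2^{a_2}$: its sum equals $\sigma(A)=0$, so $D\in\mathcal B(\{g_1,g_2\})$, and since every atomic factor of $D$ has support of size $\le 2$ and hence length $\le n_2<|A|$ by the support-$2$ bound, we have $D\in\mathcal B(\supp(A))_{|A|-1}$. The sequence $g_1^{\ord(g_1)-k}\cdot g_3$ has sum $(\ord(g_1)-k)g_1+kg_1=0$ and length at most $\ord(g_1)\le n_2<|A|$, and $g_1^{\ord(g_1)}$ is an atom of length $\le n_2<|A|$, so
\[
 g_1^{-k}g_3 \;=\; (g_1^{\ord(g_1)-k}g_3)\cdot (g_1^{\ord(g_1)})^{-1}\;\in\;\mathsf q(\mathcal B(\supp(A))_{|A|-1}).
\]
Combining, $A = D\cdot(g_1^{-k}g_3)^{a_3}\in\mathsf q(\mathcal B(\supp(A))_{|A|-1})$, contradicting $A\in\mathcal A_{\sep}(\supp(A))$.

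For the orders: since $G=\langle g_1,g_2,g_3\rangle$ has exponent $n_2$, at least one $g_i$ has order $n_2$, say $\ord(g_1)=n_2$. I would then proceed by a case analysis on the orders of $g_2,g_3$. If only $g_1$ has order $n_2$, both $g_2,g_3$ lie in the proper subgroup $G[n_2/q]\cong C_{n_1}\oplus C_{n_2/q}$ for some prime $q\mid n_2$; combining the minimality bounds $a_i<\ord(g_i)$ with the zero-sum relation $\sum a_ig_i=0$ and the pairwise cyclic independence from the previous step yields $|A|<n_2+n_1/p$, a contradiction. If $\ord(g_1)=\ord(g_2)=n_2$ but $\ord(g_3)=d$ is a proper divisor of $n_1$, then $a_3<d$ combined with $a_1,a_2\le n_2-1$ and $|A|=n_2+n_1/p$ forces a lower bound on $d$ which, combined with $d\mid n_1$ and the fact that $p$ is the \emph{smallest} prime divisor of $n_1$, rules out all proper divisors of $n_1$ strictly less than $n_1$. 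Hence $\ord(g_3)=n_1$ and, after relabeling, $\ord(g_1)=\ord(g_2)=n_2$.

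The main obstacle will be the last step (the order analysis), which requires a delicate case-by-case argument using both the precise formula $\sepbeta(G)=n_2+n_1/p$ and the minimality of $p$ as a prime divisor of $n_1$; the first two steps are quick consequences of the known rank-$2$ formula for $\sepbeta$ together with general properties of the Davenport constant, while the cyclic-independence step has the clear underlying idea that a dependence $g_3\in\langle g_1\rangle$ lets one reroute exponents through the short atoms $g_1^{\ord(g_1)}$ and $g_1^{\ord(g_1)-k}g_3$, thereby expressing $A$ in $\mathsf q(\mathcal B(\supp(A))_{|A|-1})$.
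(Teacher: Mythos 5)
Your proposal does not prove the statement it is attached to. Lemma \ref{md2} is the general combinatorial characterization of $\sepbeta(G)$ for an \emph{arbitrary} finite abelian group $G$: one direction requires showing that the preimages under $\psi$ of the separating atoms over all subsets $G_0\subseteq G$ with $|G_0|\le \mathsf r(G)+1$ yield a separating set of $\mathbb{C}[V]^G$ for every module $V$ (this rests on the reduction to multiplicity-free modules, the identification of $\mathcal M[V]^G$ with $\mathcal B(\widehat{G}_V)$ from Proposition \ref{cdg}, and the quotient-group criterion of Proposition \ref{domokos}, including the reduction to submodules of dimension at most $\mathsf r(G)+1$); the other direction requires showing that a separating atom $A\notin\mathsf q(\mathcal B(G_0)_{|A|-1})$ forces $\sepbeta(G,V)\ge |A|$ for the multiplicity-free module $V$ with $\widehat{G}_V=\rho(G_0)$, since the invariants of degree less than $|A|$ then generate a proper subgroup of $\mathsf q(\mathcal B(\widehat{G}_V))$ and so cannot separate. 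None of these ingredients appears in your write-up. (In the present paper the lemma is not proved at all but quoted from \cite[Corollary 2.6]{Do17a}.)

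What you have actually sketched is a proof of Theorem \ref{max2}, the inverse result describing the support of a maximal-length separating atom over a rank-$2$ group $C_{n_1}\oplus C_{n_2}$. Moreover, your first step explicitly invokes ``Lemma \ref{md2} applied to $H$'' to bound $|A|$ by $\sepbeta(H)$, so read as a proof of Lemma \ref{md2} the argument is circular in addition to being aimed at the wrong target. You need to restart from the correct statement; whatever merits your structural analysis of rank-$2$ separating atoms may have, it cannot be salvaged as a proof of this lemma.
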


The sharpest known lower bound for the separating Noether number of a finite abelian group $G$ was set in \cite[Lemmas 5.2 and 5.5]{Sc25a}.
\begin{lemma}\label{lowerbound}
	Let $G=C_{n_1}\oplus \ldots\oplus C_{n_r}$ with $1<n_1\t \ldots \t n_r$.
	Then
	\[{\sepbeta}(G)\ge \begin{cases}
	n_s+n_{s+1}+\ldots+n_r,&\mbox{ if }r \mbox{ is odd}, \\
	\frac{n_{s}}{p_1}+n_{s+1}+\ldots+n_r,&\mbox{ if }r\mbox{ is even},\\
	\end{cases}\]
	where $s=\lfloor\frac{r+1}{2}\rfloor$ and $p_1$ is the minimal prime divisor of $n_1$.
In particular, we have that ${\sepbeta}(G)>n_{s+1}+\ldots+n_r$.
\end{lemma}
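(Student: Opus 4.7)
The lower bound $\sepbeta(G) \geq n_2/p + n_3 + n_4$ is immediate from Lemma \ref{lowerbound}, applied with $r = 4$ (even) and $s = \lfloor 5/2 \rfloor = 2$, so the task reduces to proving the matching upper bound $\sepbeta(G) \leq n_2/p + n_3 + n_4$. By Lemma \ref{md2}, it suffices to show that for every subset $G_0 \subseteq G$ with $|G_0| \leq \mathsf r(G) + 1 = 5$ and every separating atom $A \in \mathcal A_{\sep}(G_0)$, one has $|A| \leq n_2/p + n_3 + n_4$.

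I would split the analysis by the pair $(|G_0|, \mathsf r(\langle G_0\rangle))$. Writing $H := \langle G_0 \rangle \cong C_{m_1} \oplus \cdots \oplus C_{m_s}$ with $1 < m_1 \mid \cdots \mid m_s$ and $s = \mathsf r(H) \leq 4$, the classical divisibility inequalities for subgroups of $G = C_{n_1} \oplus \cdots \oplus C_{n_4}$ yield $m_i \mid n_{4-s+i}$, so $m_s \leq n_4$, $m_{s-1} \leq n_3$, and so on. When $|G_0| \leq \mathsf r(H) + 1 \leq 4$, Lemma \ref{md2} applied to $H$ gives $|A| \leq \sepbeta(H)$, and the known rank-$\leq 3$ formulas from \cite{SZZ25a} combined with the divisibility bounds place this strictly below the target. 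The case $|G_0| = 5$ with $\mathsf r(H) \leq 3$ is subtler because $G_0$ is then no longer a valid candidate for the max in Lemma \ref{md2} applied to $H$; here the extra generator produces additional relations that the separating property must exploit to avoid the naive (and insufficient) bound $\mathsf D(H)$. The essential and most delicate case is therefore $|G_0| = 5$ with $\mathsf r(H) = 4$.

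In this main case, write $G_0 = \{g_1,\ldots,g_5\}$ and $H = C_{m_1}\oplus \cdots \oplus C_{m_4}$. Since $|G_0|$ exceeds $\mathsf r(H)$ by exactly one, the relation lattice $L := \ker(\mathbb Z^5 \twoheadrightarrow H)$ contains, beyond the order relations $\ord(g_i)\mathbf e_i$, at least one extra primitive relation $(c_1,\ldots,c_5)$ not derivable from the orders alone. I would fix a maximum-length separating atom $A = g_1^{a_1}\cdots g_5^{a_5}$ and argue that, whenever $|A| > n_2/p + n_3 + n_4$, a suitable choice of primitive relation together with the order atoms $g_i^{\ord(g_i)}$ produces an explicit decomposition of $A$ in $\mathsf q(\mathcal B(G_0))$ as a product of strictly shorter atoms (with formal inverses permitted), contradicting the defining property of $\mathcal A_{\sep}(G_0)$. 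The factor $1/p$ emerges because when the primitive relation has a coefficient divisible by $p$ in the appropriate slot, one of the full-length order atoms $g_i^{\ord(g_i)}$ can be refined into $p$ shorter atoms of length $\ord(g_i)/p$, which is precisely the source of the $(1-1/p)\,n_2$ saving in the final bound.

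The main obstacle is the combinatorial case analysis of the primitive relation $(c_1,\ldots,c_5)$: its support, its residues modulo $p$, and its interaction with the elementary divisors $m_1, \ldots, m_4$. The cases must be organized so that the $p$-fold refinement always lands in the summand corresponding to $n_2$ (never $n_3$ or $n_4$), mirroring on the upper-bound side the odd/even-rank dichotomy exhibited by Lemma \ref{lowerbound}. Since $r=4$ is even, the improvement from the naive bound $n_2 + n_3 + n_4$ down to $n_2/p + n_3 + n_4$ requires removing a block of size $n_2 - n_2/p$, and certifying — across all shapes of primitive relation, and under the constraint that every replacement atom is strictly shorter than $A$ — that exactly one such block can always be removed is the delicate heart of the argument.
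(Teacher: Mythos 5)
Your proposal does not prove the statement at hand. The statement is the \emph{lower} bound
$\sepbeta(G)\ge n_s+n_{s+1}+\ldots+n_r$ (for $r$ odd) resp.\ $\sepbeta(G)\ge \frac{n_s}{p_1}+n_{s+1}+\ldots+n_r$ (for $r$ even), valid for arbitrary rank $r$. Your first sentence declares this bound ``immediate from Lemma \ref{lowerbound}'' --- i.e.\ you assume the very lemma you were asked to prove --- and then you devote the entire argument to the \emph{upper} bound $\sepbeta(G)\le \frac{n_2}{p}+n_3+n_4$ in the special case $r=4$. That is a sketch of Theorem \ref{thm:rank4} (equivalently Proposition \ref{main}.2 restricted to rank $4$), not of Lemma \ref{lowerbound}. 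The two directions are logically independent and require entirely different techniques: an upper bound is obtained by showing every putative long separating atom decomposes in $\mathsf q(\mathcal B(G_0)_{|A|-1})$, whereas a lower bound requires an explicit \emph{construction} --- one must exhibit a concrete subset $G_0\subseteq G$ with $|G_0|\le \mathsf r(G)+1$ and a concrete atom $A\in\mathcal A(G_0)$ of the claimed length, and then verify that $A\notin\mathsf q(\mathcal B(G_0)_{|A|-1})$, typically by producing a group homomorphism or congruence obstruction that separates $A$ from the subgroup generated by the shorter atoms. Nothing in your write-up supplies such a construction, nor such an obstruction.

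For the record, the paper itself does not reprove this lemma either; it imports it from \cite[Lemmas 5.2 and 5.5]{Sc25a}, where the proof proceeds by the constructive route just described (pairing up basis elements of $G$ to build a support of size $r+1$ whose unique long atom cannot be generated by shorter ones, with the factor $\frac{n_s}{p_1}$ in the even-rank case arising from the order of a carefully chosen element in the ``middle'' coordinate). If you intended to prove Theorem \ref{thm:rank4}, your outline is at least aimed at the right kind of argument --- though even there it remains a programme rather than a proof: the ``combinatorial case analysis of the primitive relation'' that you defer is precisely where the paper's proof of Proposition \ref{main} does its work, via the reduction map $\varphi$ to $\mathcal F(n_sG)$ and the length estimates on the sequences $W^{(l)}$ and $V^{(l)}$, none of which appears in your sketch.
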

Until now there is no known example, where $\sepbeta(G)$ is strictly larger than the lower bound set in the above Lemma. On the other hand, there are some  families of finite abelian groups, for which $\sepbeta(G)$ is equal to this lower bound. Almost all of these is covered by the following result:
\begin{lemma}[{\cite[Theorem 1.1]{SZZ25a} and \cite[Theroem 1.2]{Sc25a}}]\label{lemain}
	Let $G=C_{n_1}\oplus \ldots\oplus C_{n_r}$ with $1<n_1\t \ldots \t n_r$ and $r\ge 2$, and let $s=\lfloor\frac{r+1}{2}\rfloor$. Suppose either $n_1=n_r$ or $\mathsf D(n_sG)=\mathsf D^*(n_sG)$.
	Then
	\[\begin{cases}
		{\sepbeta}(G)= n_s+n_{s+1}+\ldots+n_r,&\mbox{ if }r \mbox{ is odd} \\
		{\sepbeta}(G)\le \frac{n_{s}}{p}+n_{s+1}+\ldots+n_r,&\mbox{ if }r\mbox{ is even},\\
	\end{cases}\]
	where $p$ is the minimal prime divisor of $n_s$.
\end{lemma}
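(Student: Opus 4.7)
The plan is to establish the upper bounds; the matching lower bound in the odd case is precisely the content of Lemma \ref{lowerbound}. By the characterization in Lemma \ref{md2}, the task reduces to showing that for every subset $G_0 \subseteq G$ with $|G_0| \leq r+1$ and every separating atom $A \in \mathcal A_{\sep}(G_0)$, the length $|A|$ does not exceed $n_s + n_{s+1} + \ldots + n_r$ when $r$ is odd, respectively $n_s/p + n_{s+1} + \ldots + n_r$ when $r$ is even. Fix such a $G_0$ and write $A = \prod_{g \in G_0} g^{[a_g]}$.

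I would first record the elementary input that $a_g \leq \ord(g) - 1 \leq n_r - 1$ for every $g \in G_0$, since otherwise $g^{[\ord(g)]}$ is an atom properly dividing $A$. Combined with $|G_0| \leq r+1$ this already bounds $|A|$ by a multiple of $n_r$, but falls short of the sharp target. The sharper bound proceeds via the subgroup filtration $G \supseteq n_1 G \supseteq \ldots \supseteq n_s G$, whose top piece $n_s G \cong C_{n_{s+1}/n_s} \oplus \ldots \oplus C_{n_r/n_s}$ is the arena where the hypothesis $\mathsf D(n_s G) = \mathsf D^*(n_s G)$ applies. Partition $G_0 = G_0' \sqcup G_0''$ with $G_0' = G_0 \cap n_s G$. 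For each $g \in G_0''$ I would argue that $a_g < n_s$ as follows: using $\mathsf D(n_s G) = \mathsf D^*(n_s G)$, produce a short sequence $T \in \mathcal F(G_0')$ with $\sigma(T) = -n_s g$ and $|T| \leq \mathsf D(n_s G) - 1$; then the atom $g^{[n_s]} \cdot T$ has length well below $|A|$ whenever $|A|$ exceeds the target, and the separating condition $A \notin \mathsf q(\mathcal B(G_0)_{|A|-1})$ must be shown to fail, forcing $a_g < n_s$. The contribution from $G_0'$ is then bounded by working entirely inside $n_s G$, where the same Davenport hypothesis grants the standard bound; the alternative case $n_1 = n_r$ is the homocyclic shortcut where the filtration degenerates and the counting simplifies drastically. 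Combining the bounds on $G_0'$ and $G_0''$ with the definition $s = \lfloor (r+1)/2 \rfloor$ and $|G_0| \leq r+1$ yields the stated ceiling; for even $r$ a congruence modulo the minimal prime $p \mid n_s$ extracts the factor $n_s/p$ in place of $n_s$.

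The main obstacle is converting the abstract separating condition $A \notin \mathsf q(\mathcal B(G_0)_{|A|-1})$ into an explicit decomposition that contradicts $|A| > B$. One must exhibit witnesses $B_1, \ldots, B_m \in \mathcal B(G_0)$ of length strictly less than $|A|$ together with integers $c_1, \ldots, c_m$ such that $A = \sum c_i B_i$ in $\mathsf q(\mathcal B(G_0)) \subseteq \mathbb Z^{G_0}$, and this is where the hypothesis $\mathsf D(n_s G) = \mathsf D^*(n_s G)$ is used most heavily to secure atoms with prescribed support and multiplicities. I expect the argument to split into several subcases distinguished by $|G_0|$, by the intersection pattern of $G_0$ with the filtration, and by the parity of $r$; the boundary case $|G_0| = r+1$, where the elements of $G_0$ are forced to be $\mathbb Z$-linearly dependent and thus admit a nontrivial short relation that one can turn into the desired witness atoms, should carry the bulk of the technical work.
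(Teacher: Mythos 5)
The paper does not actually prove this lemma: it is quoted from \cite{SZZ25a} and \cite{Sc25a}. The method behind it is, however, reproduced almost verbatim in the proof of Proposition \ref{main}, which is the natural benchmark. Measured against that, your proposal gets the reduction right (via Lemma \ref{md2}, it suffices to bound $|A|$ for separating atoms over $G_0$ with $|G_0|\le r+1$) and correctly identifies $\mathsf D(n_sG)=\mathsf D^*(n_sG)$ as the operative hypothesis, but the core mechanism is missing and the route you sketch cannot reach the stated bound.

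Two concrete problems. First, the step where you force $a_g<n_s$ for $g\in G_0''$ by producing $T\in\mathcal F(G_0')$ with $\sigma(T)=-n_sg$ and $|T|\le\mathsf D(n_sG)-1$ is unjustified: $G_0'=G_0\cap n_sG$ may be empty or may generate a proper subgroup of $n_sG$ not containing $-n_sg$, and the hypothesis $\mathsf D(n_sG)=\mathsf D^*(n_sG)$ carries no representability statement over a prescribed support. Second, and more fatally, even if every multiplicity were bounded by $n_s-1$, summing over $|G_0|\le r+1$ support elements gives roughly $(r+1)n_s$ in the regime where all $n_i$ are comparable, whereas the target $n_s+n_{s+1}+\ldots+n_r$ is only about $\lceil\tfrac{r+1}{2}\rceil n_s$ there; bounding multiplicities one at a time is off by a factor of two exactly where the bound is sharp (e.g.\ for $G=C_n^r$). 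The actual argument closes this gap by a doubling device absent from your sketch: from $A$ one builds \emph{two} zero-sum sequences $W^{(l)}$ and $V^{(l)}$ (the ``fractional part'' of $A^l$ modulo $n_s$ padded by a zero-sum free remainder over $n_sG$, and a minimal complementary sequence), shows that \emph{each} has length at least $|A|$ because $A$ differs from each by elements of $\mathsf q(\mathcal B(G_0)_{|A|-1})$ (this is where factorizations through $\varphi$ and the Davenport hypothesis enter, as in \ref{cl1}), and bounds $|W^{(l)}|+|V^{(l)}|\le n_s(|G_0|+2s-2r)+2\sum_{j=s+1}^r n_j$ from above. The inequality $2|A|\le|W^{(l)}|+|V^{(l)}|$ is what produces the halving encoded in $s=\lfloor\frac{r+1}{2}\rfloor$; for even $r$ a further congruence step (choosing $l$ coprime to $n_s$ via Lemma \ref{ba} so that $|W^{(l)}|\equiv\gcd(|A|,n_s)\pmod{n_s}$) extracts the proper divisor $n_s/p$, rather than a generic ``congruence modulo $p$.'' Without this two-sided count your estimate cannot be pushed below about $(r+1)n_s$, so the proposal has a genuine gap rather than a repairable imprecision.
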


 Lemma \ref{lemain} and Lemma \ref{lowerbound} imply  that the exact value of $\sepbeta(G)$ is known for the following  finite abelian groups:
\begin{itemize}
    \item the direct sum of $r$ copies of the cyclic group $C_n$ (i.e. if $G=C_n^r$);
    \item finite abelian groups of rank $2,3$ and $5$;
    \item finite abelian $p$-groups.
\end{itemize}
 For more results on ${\sepbeta}(H)$ for general finite groups $H$,
\begin{itemize}
\item see \cite[Theorem 3.10]{Do17a} for the only example of  finite abelian groups for which $\sepbeta(H)$ is known but  not covered by Lemma \ref{lemain};
\item see \cite{domokos-schefler} for results on $\sepbeta(H)$ for finite non-abelian groups;
\item see \cite{Ko-Kr10a} for results on $\sepbeta(H)$ in positive characteristic.
\end{itemize}

\section{Minimal size of monomial separating sets}\label{sec:size}

In this section, we calculate the minimal size of  monomial separating sets of $\mathbb{C}[V]^G$ in some special cases. Results on this topic can be found in \cite{cahillcontreras, neusel-sezer}.

\begin{proposition}\cite[Proposition 3.2]{cahillcontreras}\label{prop:cch}
	Suppose $n=|G|$. There exists a monomial separating set $S$ of $\mathbb{C}[V]^G$ such that for each $L\subseteq[1,n]$ with $|L|\leq{\mathsf r}(G)+1$ there is at most one monomial $\prod_{i=1}^nx_i^{m_i}\in S$ such that $m_i\neq 0$ if and only if $i\in L$ (and no monomials when $|L|>{\mathsf r}(G)+1$).
\end{proposition}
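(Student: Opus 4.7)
The strategy is to invoke Proposition \ref{domokos}.3 and construct $S$ greedily by processing the subsets of $[1,n]$ in order of increasing size. Because $V_{\mathrm{reg}}$ is multiplicity-free with basis $(x_1,\ldots,x_n)$, each $L\subseteq [1,n]$ with $|L|\le \mathsf r(G)+1$ corresponds to the submodule $U_L$ with eigenbasis $\{x_i:i\in L\}$, and Proposition \ref{domokos}.3 says that $S\subseteq \mathcal M[V_{\mathrm{reg}}]^G$ is a monomial separating set iff for every such $L$ the group
$$H_L:=\mathsf q(\mathcal{B}(\widehat{G}_{U_L}))\subseteq \mathsf q(\mathcal F(\widehat{G}_{U_L}))$$
is generated by $\{\psi(s)\colon s\in S\cap \mathcal{M}[U_L]\}$. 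Thus what I need is to pick, for each $L$ of size at most $\mathsf r(G)+1$, at most one monomial with support exactly $L$ so that the cumulative collection generates every $H_L$.

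I would process the subsets in order of $|L|$, halting as soon as $|L|>\mathsf r(G)+1$, so that the second clause of the statement is automatic. At step $L$, let $K_L$ be the subgroup of $H_L$ generated by the $\psi$-images of the monomials already placed in $S$ whose support is a proper subset of $L$; by induction $K_L\supseteq H_{L'}$ for every $L'\subsetneq L$. If $K_L=H_L$, add nothing for $L$. Otherwise choose one monomial with support exactly $L$ whose $\psi$-image represents a generator of $H_L/K_L$. At the end, the generation hypothesis of Proposition \ref{domokos}.3 holds for every relevant $L$, and by construction each $L$ contributes at most one monomial, yielding the desired separating set.

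The heart of the argument is the claim that $H_L/K_L$ is cyclic, so that a single new monomial always suffices. To see this, fix any $i\in L$ and consider the coordinate projection $\pi_i\colon H_L\to \mathbb Z$ sending $(a_j)_{j\in L}\mapsto a_i$. Its kernel is precisely $H_{L\setminus\{i\}}$ embedded in $H_L$ via extension by zero in the $i$-th coordinate, and its image contains $\ord(\chi_i)$, hence is a nonzero subgroup of $\mathbb Z$. Consequently $H_L/H_{L\setminus\{i\}}$ is infinite cyclic, and since $H_{L\setminus\{i\}}\subseteq K_L$ the further quotient $H_L/K_L$ is cyclic as well. Given any lift $(a_j)_{j\in L}\in H_L$ of a generator, adding sufficiently large positive multiples of the elements $\ord(\chi_j)\cdot e_j\in H_{\{j\}}\subseteq K_L$ for each $j\in L$ does not change its coset in $H_L/K_L$ but allows me to arrange $a_j>0$ for all $j\in L$, producing a genuine monomial $\prod_{j\in L}x_j^{a_j}$ with support exactly $L$. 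The cyclicity of $H_L/K_L$ is the one nontrivial step; the remaining pieces are routine induction and lattice bookkeeping.
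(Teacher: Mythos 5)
Your argument is correct. Note that the paper does not prove this proposition at all --- it is imported verbatim from \cite{cahillcontreras} --- so the only in-paper comparison point is the proof of the sharper Theorem \ref{nehez}, and your mechanism is essentially the same one used there: choosing, for each support set, a single zero-sum sequence of full support whose distinguished coordinate $\mathsf v_{g_0}$ is minimal is exactly choosing a generator of your cyclic quotient $H_L/K_L$, since that minimal value generates the image of the coordinate projection modulo the sublattice coming from proper subsets. Your greedy induction over $|L|$, the identification of $\ker\pi_i\cap H_L$ with $H_{L\setminus\{i\}}$, and the adjustment by $\ord(\chi_j)\cdot e_j$ to force strictly positive exponents are all sound; the only step left implicit is the (routine) identification of $\mathsf q(\mathcal B(\widehat G_{U_L}))$ with the full relation lattice $\{(a_j)_{j\in L}\in\mathbb Z^L:\prod_{j\in L}\chi_j^{a_j}=\mathbf 1\}$, which is what makes the kernel computation and the positivity adjustment legitimate. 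The extra content of Theorem \ref{nehez} over your construction is only the converse direction, namely that a monomial is genuinely needed exactly for the subsets satisfying Property (P).
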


In order to improve Proposition \ref{prop:cch} we characterize those subsets $L\subseteq[1, n]$ for which one monomial is indeed needed in the separating set. Let $G_0\subseteq G$ be a subset with $|G_0|\leq\mathsf{r}(G)+1$. We say $G_0$ has \emph{Property (P)} if
\begin{equation}\tag{P}\label{eq:p}
\mathcal{B}(G_0)\nsubseteq\left\langle\bigcup_{G_1\subsetneq G_0}\mathcal{B}(G_1)\right\rangle\,.
\end{equation}
\begin{proposition}\label{nehez}
For $G_0\subseteq G$ let $\mathcal{M}\subseteq\mathcal{B}(G_0)$ be a separating submonoid. Then
\begin{enumerate}
    \item any generating set of $\mathcal{M}$ contains an element $m_C$ with support $C$ for each $C\subseteq G_0$ where $C$ has Property (P).
    \item  $\mathcal{M}$ does have a generating set of the form $\{m_C\colon C \subseteq G_0$, $C$ has Property (P), where the support of $m_C$ is $C$.
\end{enumerate}
In particular,  the minimal size of a monomial separating set of $\mathbb{C}[V]^G$ is $|\{C\subseteq G \colon C$ has Property (P)$\}|$.
\end{proposition}

\begin{proof}Note that $\widehat{G}_{add}$ is the dual group of $G$. We can view $G_0$ as a subset of $\widehat{G}_{add}$ and apply Lemma \ref{lem:dommain}.

1. The assertion is an immediate consequence of Lemma \ref{lem:dommain} and the definition of the Property (P).

2. Note that by Proposition \ref{prop:cch}, a separating submonoid of $\mathcal{B}(G_0)$ has a generating set with at most one generator $m_C$ for each $C\subseteq G_0$, such that the support of $m_C$ is $C$. By definition of Property (P) (and Lemma \ref{lem:dommain}), the generators $m_C$ where $C$ does not have Property (P) can be omitted.
\end{proof}

Next we establish some conditions, under which Property (P) does not hold.

\begin{lemma}\label{le4.2}
	Let $G_0$ be a subset of $G$.
\begin{enumerate}
    \item If $G_0=\{g_1,g_2\}$ with $g_1\neq g_2$, then it has Property (P) if and only if $\langle g_1\rangle\cap\langle g_2\rangle\neq\{0\}$.
    \item  If $|G_0|\geq 3$ and there exist $g\in G_0$ and $t\in [2, \ord(g)]$ such that $tg\in G_0$, then $G_0$ does not have Property (P).
    \item If $|G_0|\geq \mathsf{r}(\langle G_0\rangle)+2$, then it does not have Property (P).
    \item  If there exists $g\in G_0$ such that $|G_0|\geq \mathsf{r}(\langle G_0\setminus\{g\}\rangle)+2$, then $G_0$ does not have Property (P).
\end{enumerate}
\end{lemma}
\begin{proof}
1. We have $\langle g_1\rangle\cap\langle g_2\rangle\neq\{0\}$  if and only if there exists $A:=g_1^{m_1}g_2^{m_2}\in \mathcal A(\{g_1,g_2\})$ with $m_i\in [1,\ord(g_i)-1]$ for each $i\in \{1,2\}$ (so $A\in \mathcal B(\{g_1,g_2\})$, but $A\not\in \langle g_1^{\ord(g_1)}, g_2^{\ord(g_2)}\rangle$).

2. Let $B\in \mathcal B(G_0)$.
	Then $B=g^{m_1}(tg)^{m_2}W$ for some $m_1,m_2\in \N_0$ and $W\in \mathcal F(G_0\setminus \{g,tg\})$. Let $m_0\in \N$ be minimal such that $m_0\ord(g)>tm_2$. Here $g^{m_1+tm_2}W$, $g^{m_0\ord(g)-tm_2}(tg)^{m_2}$, $g^{m_0\ord(g)}$ are zero-sum sequences, whence
	\begin{center}
	    $B=(g^{m_1+tm_2}W)\cdot (g^{m_0\ord(g)-tm_2}(tg)^{m_2})\cdot (g^{m_0\ord(g)})^{-1}\in \langle \mathcal B(G_0\setminus \{tg\}), \mathcal B(\{g,tg\}) \rangle\,.$
	\end{center}

3. It follows immediatelly from Lemma \ref{lem:dommain}.	

4. By Lemma \ref{le4.2}.3, we may assume that $\mathsf{r}(\langle G_0\rangle)\geq |G_0|-1$. Let $g\in G_0$ such that
	$\mathsf{r}(\langle G_0\setminus\{g\}\rangle)\leq |G_0|-2$ and
	let $H=\langle G_0\setminus\{g\}\rangle$. Since $\mathsf{r}(\langle G_0\rangle)\ge |G_0|-1>\mathsf{r}(H)$, we have $g\notin H$. Let $s$ be the order of the element $g+H\in G/H$. Then $sg\in H$ and for each $M\in\mathcal{B}(G_0)$ we have that $s$ divides $\mathsf v_g(M)$. Let $G'_0=\{sg\}\cup G_0\setminus\{g\}\subseteq H$.
	If $sg\in G_0$, then the assertion follows from Lemma \ref{le4.2}.2. Suppose $sg\not\in G_0$. Then there is a monoid isomorphism between $\mathcal{B}(G_0)$ and $\mathcal{B}(G'_0)$. Since  $|G_0'|=|G_0|\ge \mathsf r(H)+2$, it follows from Lemma \ref{le4.2}.3 that
	\[
	\mathcal{B}(G'_0)\subseteq\left\langle\bigcup_{G'_1\subsetneq G'_0}\mathcal{B}(G'_1)\right\rangle \quad \text{ and hence }\quad  \mathcal{B}(G_0)\subseteq\left\langle\bigcup_{G_1\subsetneq G_0}\mathcal{B}(G_1)\right\rangle\,.
	\]
\end{proof}

In the remaining part of this Section, we apply Proposition \ref{nehez} for some specific abelian groups to obtain precise formulas for the size of a minimal monomial separating set.

Denote by $\phi$ the Euler totient function, and by $\omega(d)$ the number of distinct prime divisors of $d\in\mathbb{N}$.

\begin{proposition}\label{prop:cn}
	Suppose $G$ is a cyclic group of order $n$. Then
the minimal size of  a monomial separating set of $\mathbb{C}[V]^{G}$  is
\[
 n+{n\choose 2}-\sum_{1<d\t n}2^{\omega(d)-1}\phi(d).
\]
\end{proposition}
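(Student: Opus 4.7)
The plan is to apply Theorem \ref{nehez} to $G = C_n$, which has rank $1$, so the minimal size equals the number of subsets $G_0 \subseteq C_n$ with $|G_0| \in \{1, 2\}$ having Property (P). By Lemma \ref{lem:1-2}.1, every singleton has Property (P), contributing $|P_1| = n$. By Lemma \ref{lem:1-2}.2, a $2$-subset $\{g_1, g_2\}$ has Property (P) precisely when $\langle g_1 \rangle \cap \langle g_2 \rangle \neq \{0\}$, and by Lemma \ref{easy} this fails iff $\gcd(\ord(g_1), \ord(g_2)) = 1$. Thus
\[
|P_2| = \binom{n}{2} - \#\bigl\{\{g_1,g_2\} \subseteq C_n : g_1 \ne g_2,\ \gcd(\ord(g_1),\ord(g_2)) = 1\bigr\},
\]
and the whole task reduces to evaluating the subtracted count.

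To count those coprime-order pairs, I would split by whether $0 \in \{g_1,g_2\}$. Since $\ord(0) = 1$ is coprime to any order, the pairs $\{0, g\}$ with $g \neq 0$ contribute exactly $n - 1$. For pairs of nonzero elements with coprime orders $d_1, d_2 > 1$, I would group by the product $d = d_1 d_2$, which must divide $n$ since $\gcd(d_1,d_2)=1$ forces $\mathrm{lcm}(d_1,d_2) = d_1 d_2$. For each such divisor $d > 1$, Lemma \ref{ezisaz} furnishes $2^{\omega(d)-1}$ unordered decompositions $d = d_1 d_2$ with $d_1 \le d_2$ and $\gcd(d_1, d_2) = 1$; removing the trivial decomposition $d_1 = 1$ leaves $2^{\omega(d)-1} - 1$ decompositions with $d_1, d_2 > 1$. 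Each contributes $\phi(d_1)\phi(d_2) = \phi(d)$ (by multiplicativity of $\phi$ on coprime arguments) unordered pairs of elements of $C_n$. Hence the coprime-order pairs of nonzero elements number
\[
\sum_{\substack{d \mid n \\ d > 1}} \bigl(2^{\omega(d)-1} - 1\bigr)\,\phi(d).
\]

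Combining both cases and using the identity $\sum_{d \mid n,\,d>1} \phi(d) = n - 1$, the total number of coprime-order pairs equals
\[
(n-1) + \sum_{\substack{d\mid n \\ d>1}} 2^{\omega(d)-1}\phi(d) - (n-1) \;=\; \sum_{\substack{d\mid n \\ d>1}} 2^{\omega(d)-1}\phi(d).
\]
Therefore $|P_1| + |P_2| = n + \binom{n}{2} - \sum_{1 < d \mid n} 2^{\omega(d)-1}\phi(d)$, which by Theorem \ref{nehez} is the claimed minimal size. The one mildly delicate point is ensuring the bookkeeping around $d_1 = 1$ (i.e.\ the zero element) matches up with the Case~1 count; this is what forces the cancellation of the $(n-1)$ term and produces the clean closed form.
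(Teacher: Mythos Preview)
Your proof is correct and follows essentially the same approach as the paper: both reduce via Theorem~\ref{nehez} and Lemmas~\ref{lem:1-2} and~\ref{easy} to counting unordered pairs $\{g_1,g_2\}\subseteq C_n$ with $\gcd(\ord(g_1),\ord(g_2))=1$, then organize by the product $d=\ord(g_1)\ord(g_2)\mid n$ and invoke Lemma~\ref{ezisaz} together with multiplicativity of~$\phi$. The only difference is cosmetic: the paper folds the $d_1=1$ (i.e.\ zero-element) case directly into the factorization count, whereas you peel off the $\{0,g\}$ pairs first and then observe the resulting $(n-1)$ terms cancel via $\sum_{1<d\mid n}\phi(d)=n-1$.
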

\begin{proof}
Each subset of $G$ with cardinality $1$ has Property (P). Lemma \ref{le4.2}.1 and \ref{le4.2}.3 implies that it suffices to count the number of possibilities for $G_0=\{g_1,g_2\}$ with $g_1\neq g_2$, such that $\langle g_1\rangle\cap\langle g_2\rangle\neq\{0\}$. Instead of this we subtract from ${n\choose 2}$ the cardinality of
\begin{center}
    $P^*:=\{\{g_1,g_2\}\subseteq G$: $g_1\neq g_2$, $\langle g_1\rangle\cap\langle g_2\rangle=\{0\}\}.$
\end{center}

Observe that $\langle g_1\rangle\cap\langle g_2\rangle=\{0\}$ if and only if $\gcd\left(\ord(g_1),\ord(g_2)\right)=1$. Then
\[
\ord(g_1+g_2)=\lcm (\ord(g_1),\ord(g_2))=\frac{\ord(g_1)\ord(g_2)}{\gcd(\ord(g_1),\ord(g_2))}\,=\ord(g_1)\ord(g_2).
\]
The number of elements of order $d$ is $\phi(d)$; and for every $1<d\in\mathbb{N}$, there are $2^{\omega(d)-1}$ different decompositions $d=d_1d_2$, such that $\mathrm{gcd}(d_1,d_2)=1$ and $d_1<d_2$. It follows that
\begin{align*}
 |P^*|&=\sum_{1<d\t n}\left|\big\{\{g_1,g_2\}\in P^*\colon   d=\ord(g_1+g_2)\big\}\right|\\
 &=\sum_{1<d\t n}\sum_{\ord(h)=d}\left|\big\{\{g_1,g_2\}\in P^* \colon g_1+g_2=h, d=\ord(g_1)\ord(g_2)\big\}\right|\\
&=\sum_{1<d\t n}\phi(d)\left|\big\{\{d_1,d_2\}\subseteq [1,n-1]\colon d_1<d_2, \gcd(d_1,d_2)=1, d=d_1d_2\big\}\right|\\
&=\sum_{1<d\t n}2^{\omega(d)-1}\phi(d)\,.
\end{align*}
\end{proof}

The converse of Lemma \ref{le4.2}.4 is true for finite abelian $p$-groups with an extra condition.

\begin{lemma}\label{kell}Suppose $G$ is a finite abelian $p$-group. Let $G_0$ be a subset of $G$ such that $|G_0|=\mathsf{r}(\langle G_0\rangle)+1$. Then
\begin{center}
    $G_0$ has Property (P) if and only if $\mathsf{r}(\langle G_0\setminus\{g\}\rangle)=\mathsf{r}(\langle G_0\rangle)$ for each $g\in G_0$.
\end{center}
\end{lemma}
\begin{proof}
We start with the sufficiency.
Since $G$ is a $p$-group and $|G_0|=\mathsf{r}(\langle G_0\rangle)+1$, there exists a $g\in G_0$ such that $\langle G_0\rangle=\langle G_0\setminus\{g\}\rangle$. Thus $g\in \langle G_0\setminus\{g\}\rangle$ and hence there exists $A\in \mathcal B(G_0)$ with $\mathsf v_g(A)=1$.

To show that $G_0$ has Property (P), it suffices to prove that for every $h\in G_0\setminus \{g\}$, $p$ divides $\mathsf v_g(M)$ for each $M\in \mathcal B(G_0\setminus \{h\})$.
Let $h\in G_0\setminus\{g\}$.
If $g\in \langle G_0\setminus \{g,h\} \rangle $, then
\[
\mathsf r(\langle G_0\setminus \{h\}\rangle)=\mathsf r(\langle G_0\setminus \{g,h\} \rangle)\le | G_0\setminus \{g,h\} |=\mathsf r(\langle G_0\rangle)-1\,,
\]
a contradiction to our assumption. Thus $g\notin \langle G_0\setminus \{g,h\} \rangle $. Let $H=\langle G_0\setminus \{g,h\} \rangle $, and denote by $t$ the order of the element $g+H\in G/H$. Since $G/H$ is still a $p$-group, $p$ divides $t$ and so for every $M\in \mathcal B(G_0\setminus \{h\})$ we have that $p$ divides $\mathsf v_g(M)$. The assertion follows.

The necessity is a direct consequence of Lemma \ref{le4.2}.4.
\end{proof}

\begin{proposition}\label{prop:eapg}
Let $G$ be an elementary abelian $p$-group of rank $r\geq 2$.
Then the minimal size of a monomial separating set of $\mathbb{C}[V]^{G}$ is
\[
  \lambda(p,r):=p^r+\frac{(p^r-1)(p-2)}{2}+\sum_{i=3}^{r+1}\frac{(p^r-1)(p^r-p)\cdot\ldots\cdot(p^r-p^{i-2})(p-1)^{i-1}}{i!}.
\]

\end{proposition}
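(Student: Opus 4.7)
The plan is to apply Theorem \ref{nehez}: the minimum size equals $\sum_{i=1}^{r+1}|P_i|$. The summands for $i=1,2$ are handled directly. By Lemma \ref{lem:1-2}.1 we have $|P_1|=|G|=p^r$. Lemma \ref{lem:1-2}.2 characterizes $P_2$ as the unordered pairs $\{g_1,g_2\}$ with $g_1\neq g_2$ and $\langle g_1\rangle\cap\langle g_2\rangle\neq\{0\}$, which in an elementary abelian $p$-group means that $g_1,g_2$ are two distinct nonzero elements of a common one-dimensional subspace. Counting the $\frac{p^r-1}{p-1}$ such subspaces, each contributing $\binom{p-1}{2}$ pairs, yields $|P_2|=\frac{(p^r-1)(p-2)}{2}$.

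For $3\le i\le r+1$, I would first establish the characterization: $G_0\in P_i$ if and only if $0\notin G_0$, $\mathsf r(\langle G_0\rangle)=i-1$, and every $(i-1)$-element subset of $G_0$ is linearly independent. Three configurations must be ruled out. First, if $0\in G_0$, then each $B\in\mathcal B(G_0)$ factors as $B'\cdot 0^k$ with $B'\in\mathcal B(G_0\setminus\{0\})$ and $0^k\in\mathcal B(\{0\})$; both factors lie in $\mathcal B(G_1)$ for proper subsets $G_1\subsetneq G_0$, so Property (P) fails. Second, Lemma \ref{kisrang} excludes $\mathsf r(\langle G_0\rangle)\le i-2$. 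Third, if $G_0\subseteq G\setminus\{0\}$ is linearly independent, then $\mathcal B(G_0)$ is generated by $\{g^p\colon g\in G_0\}$, with each $g^p\in\mathcal B(\{g\})\subseteq\mathcal B(G_1)$ for some $G_1\subsetneq G_0$, so Property (P) again fails. The remaining case $|G_0|=\mathsf r(\langle G_0\rangle)+1$ with $0\notin G_0$ is exactly the situation governed by Corollary \ref{ezkell}, which yields the stated equivalence.

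The counting step is then a routine linear-algebra enumeration. Ordered tuples $(g_1,\ldots,g_{i-1},g_i)$ in which $g_1,\ldots,g_{i-1}$ are linearly independent and $g_i=\alpha_1 g_1+\cdots+\alpha_{i-1}g_{i-1}$ with every $\alpha_j\in\mathbb F_p^\times$ correspond bijectively to orderings of the elements of $P_i$: a short check using linear independence of $g_1,\ldots,g_{i-1}$ shows that $g_i$ is automatically nonzero and distinct from $g_1,\ldots,g_{i-1}$, and every ordering of a given $G_0\in P_i$ is of this form, since every $(i-1)$-subset of $G_0$ is linearly independent. The ordered count is $(p^r-1)(p^r-p)\cdots(p^r-p^{i-2})\cdot(p-1)^{i-1}$, so dividing by $i!$ gives $|P_i|=\frac{(p^r-1)(p^r-p)\cdots(p^r-p^{i-2})(p-1)^{i-1}}{i!}$, matching the corresponding summand in $\lambda_1(p,r)$.

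The main obstacle is the characterization step for $i\ge 3$, especially excluding the linearly independent case, which hinges on the elementary-abelian-$p$-group-specific fact that all atoms of $\mathcal B(G_0)$ in that case are of the form $g^p$, together with the careful verification via Corollary \ref{ezkell} that the surviving configuration is exactly the one stated. Once the characterization is secured, summing $|P_1|+|P_2|+\sum_{i=3}^{r+1}|P_i|$ produces exactly $\lambda_1(p,r)$.
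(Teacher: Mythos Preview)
Your proposal is correct and follows essentially the same route as the paper: apply Theorem~\ref{nehez}, use Lemma~\ref{lem:1-2} for $i=1,2$, and for $i\ge 3$ eliminate the cases $\mathsf r(\langle G_0\rangle)\le i-2$ (via Lemma~\ref{kisrang}) and $\mathsf r(\langle G_0\rangle)=i$ (independent case, atoms are $g^p$), then invoke Corollary~\ref{ezkell} to characterize $P_i$ and count via ordered tuples divided by $i!$. Your separate exclusion of $0\in G_0$ is harmless but redundant, since that case is already absorbed by Corollary~\ref{ezkell} (with $g_1=0$ the coefficient condition fails); otherwise your argument matches the paper's, with the counting step spelled out in slightly more detail.
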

\begin{proof}
Each subset of $G$ with cardinality $1$ has Property (P).
Since $G$ is an elementary abelian $p$-group, we have $\langle g_1\rangle\cap\langle g_2\rangle\neq\{0\}$ if only if $\langle g_1\rangle=\langle g_2\rangle\neq\{0\}$. There are $\frac{p^r-1}{p-1}$ cyclic subgroups of $G$, so by Lemma \ref{le4.2}.1, there are $\frac{p^r-1}{p-1}{p-1\choose 2}=\frac{(p^r-1)(p-2)}{2}$ subsets of $G$ with cardinality $2$ having Property (P).

Let $G_0=\{g_1,\ldots,g_{|G_0|}\}\subseteq G$ with $|G_0|\geq 3$ be a subset that has Property (P). Then $|G_0|<\mathsf{r}(\langle G_0\rangle)+2$, since otherwise $G_0$ would not have Property (P) by Lemma \ref{le4.2}.3. If we had $|G_0|=\mathsf{r}(\langle G_0\rangle)$,  then the elements of $G_0$ would form a basis of $\langle G_0\rangle$, since $G$ is an elementary abelian $p$-group. It would follow that $\mathcal{B}(G_0)=[g^p\colon g\in G_0]$ and hence $G_0$ would not have Property (P), a contradiction. Therefore
 $|G_0|=\mathsf{r}(\langle G_0\rangle)+1$ and hence we may assume that $G_0\setminus\{g_1\}$ is a basis of $\langle G_0\rangle$. By Lemma \ref{kell}, $G_0$ has Property (P) if and only if $g_1=\sum_{i=2}^{|G_0|}\alpha_ig_i$  for some $\alpha_2,\ldots,\alpha_{|G_0|}\in [1,p-1]$.
Thus for each $i\in[3,\mathsf{r}(G)+1]$, there are $\frac{(p^r-1)(p^r-p)\cdot\ldots\cdot(p^r-p^{i-2})(p-1)^{i-1}}{i!}$ subsets of $G$ with cardinality $i$ having Property (P). The assertion follows.
\end{proof}

\noindent{\it Remark.}
Comparing our exact value $\lambda(p,r)$ to the upper bound $\mu(p,r)=\sum_{i=1}^{r+1}{p^r\choose i}$ obtained from Proposition \ref{prop:cch}, gives the following.
\begin{enumerate}
    \item For fixed $r$, we have $\underset{p\rightarrow\infty}{\mathrm{lim}}\frac{\lambda(p,r)}{\mu(p,r)}=1$.
    \item For fixed $p$, we have $\underset{r\rightarrow\infty}{\mathrm{lim}}\frac{\lambda(p,r)}{\mu(p,r)}=0$.
\end{enumerate}

\section{Separating Noether number of abelian groups of rank $4$}\label{sec:snrank4}

In this section, we study the exact value of the separating Noether number for finite abelian groups. Our main result is the following proposition.

\begin{proposition}\label{main}
Let $G=C_{n_1}\oplus \ldots\oplus C_{n_r}$ with $1<n_1\t \ldots \t n_r$ and $r\ge 2$. Suppose $\mathsf D(n_sG)=\mathsf D^*(n_sG)$, where $s=\lfloor \frac{r+1}{2}\rfloor$.
\begin{enumerate}
    \item If $G_0$ is a subset of $G$  with $|G_0|\leq r+1$ such that there exists a separating atom $A$ over $G_0$ with $|A|={\sepbeta}(G)$, then $| \supp(A)|=|G_0|=r+1$.
    \item If $r$ is even and $\mathsf D(n_iG)=\mathsf D^*(n_iG)$ for every $i\in [s,r]$, then
    \begin{center}
        ${\sepbeta}(G)=\frac{n_{s}}{p_1}+n_{s+1}+\ldots+n_r$,
    \end{center}
    where $p_1$ is the minimal prime divisor of  $n_1$.
\end{enumerate}
\end{proposition}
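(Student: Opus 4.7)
The plan is to establish Part (1) first, a structural rigidity result on separating atoms of maximal length, and then leverage this rigidity in Part (2) to sharpen the upper bound of Lemma \ref{lemain} (which only yields $\sepbeta(G)\le \frac{n_s}{p}+n_{s+1}+\ldots+n_r$ with $p$ the minimal prime divisor of $n_s$) down to $\frac{n_s}{p_1}+n_{s+1}+\ldots+n_r$.

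For Part (1), I would argue by contradiction. Fix $G_0\subseteq G$ with $|G_0|\le r+1$ and a separating atom $A\in \mathcal{A}_{\sep}(G_0)$ with $|A|=\sepbeta(G)$. The first reduction is cheap: if $\supp(A)\subsetneq G_0$, then because $\mathcal{B}(\supp(A))_{|A|-1}\subseteq \mathcal{B}(G_0)_{|A|-1}$ one sees that $A$ remains a separating atom over $\supp(A)$ with the same length, so I may assume $\supp(A)=G_0$ and it remains to rule out $|G_0|\le r$. Combining $|A|=\sepbeta(G)>n_{s+1}+\ldots+n_r$ from Lemma \ref{lowerbound} with the hypothesis $\mathsf{D}(n_sG)=\mathsf{D}^*(n_sG)$, I would analyze $A$ through the canonical epimorphism $G\twoheadrightarrow G/n_sG$: split $A$ into a part contributing to a zero-sum modulo $n_sG$ and a residual zero-sum sequence supported in $n_sG$, and apply the Davenport equality to factor the latter through $\mathsf D^*(n_sG)$-many short atoms. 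Because $|\supp(A)|=|G_0|\le r$, this factorization will have all factors of length strictly less than $|A|$, exhibiting $A\in \mathsf q(\mathcal{B}(G_0)_{|A|-1})$ and contradicting $A\in \mathcal{A}_{\sep}(G_0)$.

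For Part (2), the lower bound is Lemma \ref{lowerbound}. For the upper bound, pick any $G_0\subseteq G$ with $|G_0|\le r+1$ realising $\sepbeta(G)$ via some separating atom $A$. Part (1) forces $|\supp(A)|=|G_0|=r+1$, so $A=\prod_{i=1}^{r+1}g_i^{k_i}$ with $k_i\ge 1$ and a nontrivial $\mathbb{Z}$-linear relation $\sum a_i g_i=0$ among the $g_i$, which may be normalised to be ``primitive.'' The idea is to iterate the Lemma \ref{lemain} bound along the chain $G\supseteq n_sG\supseteq n_{s+1}G\supseteq \ldots \supseteq n_rG$: at each stage $i\in[s+1,r]$ the hypothesis $\mathsf{D}(n_iG)=\mathsf{D}^*(n_iG)$ lets one replace a potential contribution of size $n_{i-1}$ by $n_i$ exactly, while at the bottom stage ($i=s$) the stronger rigidity from Part (1) --- together with primitivity of the relation --- forbids the $n_s$-contribution from reducing only by the minimal prime $p$ of $n_s$; the contribution must use an element whose order already divides $n_1$, forcing the divisor to lie among primes of $n_1$, whence $\frac{n_s}{p_1}$ appears. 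Summing gives the claimed upper bound.

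The hardest step is Part (1), and specifically the case $|G_0|=r$. One needs to produce the length-reducing factorization of $A$ so delicately that every resulting factor really has length $\le |A|-1$; mere existence of a factorization is not enough. The equality $\mathsf D(n_sG)=\mathsf D^*(n_sG)$ supplies a structural basis-like description of long zero-sums in $n_sG$, and the constraint $|\supp(A)|\le r$ keeps the associated class group small enough that this basis can be used explicitly. Organising this combinatorics without losing track of the length budget $|A|=\sepbeta(G)>n_{s+1}+\ldots+n_r$ is the core technical challenge, after which Part (2) follows by the iterative descent described above.
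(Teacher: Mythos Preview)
Your proposal has genuine gaps in both parts.

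\textbf{Part (1).} The plan ``split $A$ into a part contributing to a zero-sum modulo $n_sG$ and a residual zero-sum sequence supported in $n_sG$, then factor the latter into short atoms'' does not work as stated: $A$ is a minimal zero-sum sequence over $G_0$, so it has no nontrivial zero-sum subsequences to peel off, and the terms of $A$ lie in $G$, not in $n_sG$. Passing to $G/n_sG$ produces a zero-sum image, but lifting a factorization of that image back to $G_0$ yields subsequences whose sums lie in $n_sG$, not zero-sums in $\mathcal{B}(G_0)$; you never exhibit $A$ as a quotient of elements of $\mathcal{B}(G_0)_{|A|-1}$. The paper's mechanism is different and more delicate: writing $m_i = k_i n_s + x_i$ with $x_i\in[0,n_s-1]$, one builds \emph{two} auxiliary zero-sums $W,V\in\mathcal{B}(G_0)$ (roughly, $W$ combines $\prod g_i^{x_i}$ with a lift of a zero-sum-free part of $\prod(n_sg_i)^{k_i}$, while $V$ is a minimal ``complement'' making $\prod g_i^{n_s t_i - x_i}$ zero-sum). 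Because $A$ is separating, each of $W,V$ must have length $\ge |A|$, while one bounds $|W|+|V|\le n_s(|G_0|+2s-2r)+2\sum_{j>s}n_j$ using $\mathsf D(n_sG)=\mathsf D^*(n_sG)$ twice. Comparing $2|A|\le|W|+|V|$ with Lemma~\ref{lowerbound} forces $|G_0|=r+1$. This two-sided squeeze, not a single factorization of $A$, is the missing idea.

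\textbf{Part (2).} The ``iterate along the chain $G\supseteq n_sG\supseteq\ldots$'' and ``primitive relation among the $g_i$'' outline does not match any working argument I can reconstruct, and it is not how the paper proceeds. The paper assumes for contradiction that $|A|$ lies in the gap $(\frac{n_s}{p_1}+\sum_{j>s}n_j,\ \frac{n_s}{p}+\sum_{j>s}n_j]$, then uses the $W^{(l)},V^{(l)}$ machinery (with a carefully chosen $l$ coprime to $n_s$, via Lemma~\ref{ba}) to pin down $d=\gcd(|A|,n_s)$ and show that $m:=n_s/d<p_1$, hence $\gcd(m,n_1)=1$. One then proves $A^m\in\mathsf q(\mathcal{B}(G_0)_{|A|-1})$ directly and $A^{n_1}\in\mathsf q(\mathcal{B}(G_0)_{|A|-1})$ by bounding $n_1\sepbeta(n_1G)\le |A|-1$ (this is the \emph{only} place the hypotheses $\mathsf D(n_iG)=\mathsf D^*(n_iG)$ for $i>s$ enter, via Lemma~\ref{lemain} applied to $n_1G$). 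B\'ezout on $m$ and $n_1$ then forces $A\in\mathsf q(\mathcal{B}(G_0)_{|A|-1})$, a contradiction. Your descent picture with contributions $n_{i-1}\to n_i$ and an order-divides-$n_1$ constraint does not correspond to this, and I do not see how to make it precise.
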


The proof of Proposition \ref{main} will follow the ideas of \cite[Theorem 1.1]{SZZ25a}. We need the following lemmas.

\begin{lemma}[{\cite[Lemma 2.2]{SZZ25a}}]\label{separatingatomlength}
Let $G$ be a finite abelian group and let $G_0\subseteq  G$ be a nonempty subset. If $A$ is a separating atom over $G_0$, then $|A|\le \mathsf D^*(G)$.
\end{lemma}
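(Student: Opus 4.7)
The plan is to prove the contrapositive: if $A\in\mathcal A(G_0)$ satisfies $|A|\ge \mathsf D^*(G)+1$, then $A\in\mathsf q(\mathcal B(G_0)_{|A|-1})$, contradicting $A$ being a separating atom. I aim at the stronger structural reduction
$$\mathsf q(\mathcal B(G_0))=\mathsf q(\mathcal B(G_0)_{\mathsf D^*(G)}),$$
i.e., atoms of length at most $\mathsf D^*(G)$ already generate $\mathsf q(\mathcal B(G_0))$ as an abelian group. This suffices, because then $A\in\mathsf q(\mathcal B(G_0))=\mathsf q(\mathcal B(G_0)_{\mathsf D^*(G)})\subseteq \mathsf q(\mathcal B(G_0)_{|A|-1})$.

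Setup and easy part. Writing $G\cong C_{n_1}\oplus\cdots\oplus C_{n_r}$ with $1<n_1\mid\cdots\mid n_r$, we have $\mathsf D^*(G)=1+\sum_{i=1}^r(n_i-1)\ge \exp(G)$. Identify $\mathsf q(\mathcal F(G_0))$ with $\mathbb Z^{G_0}$, and let $\sigma\colon \mathbb Z^{G_0}\to \langle G_0\rangle\subseteq G$ be the sum map; then $\mathsf q(\mathcal B(G_0))=\ker\sigma$ is free abelian of rank $|G_0|$. The $|G_0|$ diagonal atoms $g^{\ord(g)}$ for $g\in G_0$ are mutually independent in $\ker\sigma$ and have length $\ord(g)\le\exp(G)\le\mathsf D^*(G)$, so they generate a full-rank sublattice $L_0\subseteq\ker\sigma$ of finite index $\bigl(\prod_{g\in G_0}\ord(g)\bigr)/|\langle G_0\rangle|$.

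Core step. One then proves by induction on $|B|$ that every $B\in\mathcal B(G_0)$ lies in the subgroup of $\ker\sigma$ generated by atoms of length at most $\mathsf D^*(G)$. The only non-trivial case is $|B|\ge \mathsf D^*(G)+1$, where the goal is to locate a proper zero-sum subsequence $B'\mid B$ with $|B'|\le \mathsf D^*(G)$; then $B'\in\mathcal B(G_0)_{\mathsf D^*(G)}$ and $B\cdot (B')^{-1}\in\mathcal B(G_0)$ has length strictly less than $|B|$, so the inductive hypothesis closes the argument. To produce such a $B'$, fix a basis $(e_1,\ldots,e_r)$ of $G$ with $\ord(e_i)=n_i$, expand each term of $B$ in basis coordinates, and run a coordinate-refined partial-sum/pigeonhole argument, processing the $r$ cyclic factors $C_{n_i}$ separately, using that $|B|>1+\sum(n_i-1)$ is sharp with respect to $\mathsf D^*(G)$.

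Main obstacle. The naive pigeonhole on partial sums in $G$ only produces a zero-sum subsequence of length at most $|G|$, which is far too weak; producing one of length at most $\mathsf D^*(G)=1+\sum(n_i-1)$ requires a multi-dimensional analogue that exploits the direct-sum decomposition of $G$ coordinate by coordinate. The extra subtlety is that the basis $(e_i)$ need not lie in $G_0$, so the extracted subsequence must be carved entirely out of the terms of $B$, hence built only from elements of $G_0$. Making these two constraints compatible is the technical heart of the proof and is precisely the content of \cite[Lemma 2.2]{SZZ25a}.
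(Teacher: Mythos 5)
The paper does not prove this lemma itself; it is quoted from \cite[Lemma 2.2]{SZZ25a}, so your proposal has to stand on its own. Your reduction of the statement to the group-theoretic identity $\mathsf q(\mathcal B(G_0))=\mathsf q(\mathcal B(G_0)_{\mathsf D^*(G)})$ is the right target, but the core step you propose to reach it is false. You claim that every $B\in\mathcal B(G_0)$ with $|B|\ge \mathsf D^*(G)+1$ admits a proper nontrivial zero-sum subsequence $B'$ with $|B'|\le \mathsf D^*(G)$. A zero-sum sequence has a proper nontrivial zero-sum subsequence only when it fails to be a minimal zero-sum sequence, i.e.\ only when $|B|>\mathsf D(G)$ or $B$ is not an atom; and as Section 2 of the paper itself records, there are infinitely many groups (of rank $\ge 4$) with $\mathsf D(G)>\mathsf D^*(G)$. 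For such a $G$, take an atom $B$ with $|B|=\mathsf D(G)>\mathsf D^*(G)$ and $G_0=\supp(B)$: your induction has nothing to split off, and the argument halts exactly on the sequences the lemma is about.

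The failure is structural, not just a missing pigeonhole refinement. Your induction only ever produces monoid factorizations $B=B'\cdot\bigl((B')^{-1}B\bigr)$ and never uses inverses in $\mathsf q(\mathcal B(G_0))$; carried through, it could at best show $\mathcal B(G_0)=\mathcal B(G_0)_{\mathsf D^*(G)}$ as monoids, which is equivalent to $\mathsf D(G_0)\le \mathsf D^*(G)$ and is false in general. The entire content of the lemma is that the \emph{quotient group} is generated by short atoms even when the monoid is not, so any proof must genuinely subtract zero-sum sequences --- for instance via the complementary sequence $A'=\prod_{g\in G_0}g^{\ord(g)-\mathsf v_g(A)}$ and the identity $A=\bigl(\prod_{g\in G_0}g^{\ord(g)}\bigr)\cdot (A')^{-1}$, or via reductions modulo subgroups of the kind used for Claims \ref{cl1}--\ref{cl:n1} in Section 5. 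Finally, you defer ``the technical heart'' to \cite[Lemma 2.2]{SZZ25a}, which is the statement being proved; as written the proposal is circular and, where it is concrete, it proves the weaker bound $|A|\le\mathsf D(G)$ rather than $|A|\le\mathsf D^*(G)$.
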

\begin{lemma}[{\cite[Lemma 4.2]{Sc25a}}]\label{ba}
Let $\alpha,\beta,\gamma$ be positive integers with $\gcd(\alpha,\beta) = 1$. Then there exists an $\ell\in \{1,2,\ldots,\alpha\gamma-1\}$, for which $\gcd(\ell,\alpha\gamma) = 1$ and $\ell \beta\equiv 1$ mod $\alpha$ hold.
\end{lemma}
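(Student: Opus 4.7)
The plan is to construct $\ell$ by first fixing its residue class modulo $\alpha$ so that the congruence $\ell\beta\equiv 1\pmod{\alpha}$ is forced, and then exploiting the remaining freedom modulo $\gamma$ to secure coprimality with $\alpha\gamma$. First, since $\gcd(\alpha,\beta)=1$, I would pick the unique $\ell_{0}\in\{1,\ldots,\alpha-1\}$ with $\ell_{0}\beta\equiv 1\pmod{\alpha}$ when $\alpha>1$ (taking $\ell_{0}=1$ if $\alpha=1$); the congruence immediately yields $\gcd(\ell_{0},\alpha)=1$. The candidates for $\ell$ are then parametrised as
\begin{equation*}
\ell \,=\, \ell_{0}+k\alpha, \qquad k\in\{0,1,\ldots,\gamma-1\}.
\end{equation*}
Each such $\ell$ lies in $\{1,\ldots,\alpha\gamma-1\}$ (the strict inequality $\ell_{0}<\alpha$ is exactly what forces the top of the range to stay below $\alpha\gamma$), automatically satisfies $\ell\beta\equiv 1\pmod{\alpha}$, and has $\gcd(\ell,\alpha)=\gcd(\ell_{0},\alpha)=1$. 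Hence the problem reduces to choosing $k$ so that additionally $\gcd(\ell,\gamma)=1$, which together with $\gcd(\ell,\alpha)=1$ gives $\gcd(\ell,\alpha\gamma)=1$.

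Next, I would split the prime divisors of $\gamma$ into two groups. For every prime $p$ dividing both $\alpha$ and $\gamma$, coprimality is automatic: $\gcd(\ell_{0}+k\alpha,p)=\gcd(\ell_{0},p)=1$ for every $k$. So the only genuine constraints come from the primes $p_{1},\ldots,p_{j}$ dividing $\gamma$ but not $\alpha$. For each such $p_{i}$, the element $\alpha$ is a unit modulo $p_{i}$, so the bad condition $p_{i}\mid\ell_{0}+k\alpha$ translates into a single forbidden residue class $c_{i}\pmod{p_{i}}$ for $k$.

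Finally, I would run a Chinese-remainder/inclusion-exclusion sieve on $k\in\{0,1,\ldots,\gamma-1\}$. Since the $p_{i}$ are distinct primes all dividing $\gamma$, for any subset $S\subseteq\{1,\ldots,j\}$ the number of $k$ in this interval satisfying $k\equiv c_{i}\pmod{p_{i}}$ for every $i\in S$ equals $\gamma/\prod_{i\in S}p_{i}$. Inclusion-exclusion then gives
\begin{equation*}
\#\{k : \ell_{0}+k\alpha \text{ is coprime to } \gamma\} \,=\, \gamma\prod_{i=1}^{j}\Bigl(1-\tfrac{1}{p_{i}}\Bigr) \,>\, 0,
\end{equation*}
so a legal $k$ exists, and the corresponding $\ell$ satisfies every requirement of the lemma. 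The main (and essentially only) obstacle is purely bookkeeping: one must dispose of the degenerate cases $\alpha=1$ and $j=0$ (empty product $=1$), and one must justify $\ell_{0}\le\alpha-1$ (rather than $\ell_{0}=\alpha$) whenever $\alpha>1$, in order to keep $\ell_{0}+(\gamma-1)\alpha$ strictly below $\alpha\gamma$; both are handled by direct inspection.
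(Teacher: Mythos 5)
The paper does not prove this lemma; it is imported by citation from \cite[Lemma 4.2]{Sc25a}, so there is no in-text proof to compare against. Your argument is correct and complete: taking $\ell_0\in[1,\alpha-1]$ with $\ell_0\beta\equiv 1 \pmod{\alpha}$ forces both the congruence and $\gcd(\ell,\alpha)=1$ for every $\ell=\ell_0+k\alpha$, the primes of $\gamma$ dividing $\alpha$ are handled automatically, and the CRT/inclusion--exclusion count $\gamma\prod_{i}\bigl(1-\tfrac{1}{p_i}\bigr)>0$ over the remaining primes $p_i\mid\gamma$ with $p_i\nmid\alpha$ produces a $k\in[0,\gamma-1]$ making $\ell$ coprime to $\gamma$ while keeping $\ell\le\alpha\gamma-1$. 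The only caveat, which you already flag, is the degenerate case $\alpha=\gamma=1$, where the interval $\{1,\ldots,\alpha\gamma-1\}$ is empty and the statement fails as literally written; in the paper's application one has $\alpha\gamma=n_s>1$, so this does not arise.
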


By Lemma \ref{md2} there exists	a subset $G_0\subseteq G$ with $|G_0|\le r+1$ and a separating atom $A$ over $G_0$ with $|A|={\sepbeta}(G)$. Set $G_1=\{n_sg\colon g\in G_0\}$. Define the map
\begin{center}
    $\varphi\colon \{S\in \mathcal F(G_0)\colon n_s\t \mathsf v_g(S) \text{ for each }g\in G_0\}\rightarrow \mathcal F(G_1),$\\
by $\varphi(\prod_{g\in G_0}g^{n_sy_g})=\prod_{g\in G_0}(n_sg)^{y_g},$
\end{center}
where $y_g\in\mathbb{N}$ for each $g\in G_0$. For a sequence $T$ over $G_1$, let $\varphi^{-1}(T)$ denotes the set of all sequences $S$ with $n_s\t \mathsf v_g(S)$ for each $g\in G_0$ such that $\varphi(S)=T$.

Now we are ready to prove Proposition \ref{main}.

\begin{proof}[Proof of Proposition \ref{main}] Let $G_0=\{g_1,\ldots, g_{|G_0|}\}\subseteq G$ be a subset  with $|G_0|\le r+1$ and let
	$$A=\prod_{i=1}^{|G_0|}g_i^{m_i}, \text{ where }m_i\in \N \text{ for each }i\in [1,|G_0|]\,,$$
	be a separating atom  over $G_0$ with $|A|={\sepbeta}(G)$.

We proceed to prove several claims.
\begin{claim}\label{cl1}
If $S\in \mathcal B(G_0)$ with $n_s\t \mathsf v_g(S)$ for each $g\in G_0$, then $S\in \mathsf q(\mathcal B(G_0)_{|A|-1})$.
\end{claim}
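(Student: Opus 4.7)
The plan is to transport $S$ through the monoid map $\varphi$ into the smaller group $n_sG$, use the Davenport-constant hypothesis there to split $\varphi(S)$ into short atoms, and then pull the factorization back to $\mathcal B(G_0)$ in a way that still multiplies to $S$, verifying that every lifted factor has length strictly less than $|A|={\sepbeta}(G)$. This will force $S$ into the submonoid $\mathcal B(G_0)_{|A|-1}$, and in particular into its quotient group.

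Concretely, write $S=\prod_{g\in G_0}g^{n_sy_g}$ with $y_g\in\mathbb N_0$, and let $T:=\varphi(S)$. Since $\sigma(T)=\sum_gy_g(n_sg)=n_s\sum_gy_gg=\sigma(S)=0$, we have $T\in\mathcal B(G_1)\subseteq\mathcal B(n_sG)$. Factor $T=T_1\cdots T_k$ into atoms of $\mathcal B(G_1)$; because any zero-sum refinement of $T_j$ inside $\mathcal B(n_sG)$ remains supported on $G_1$, each $T_j$ is also an atom of $\mathcal B(n_sG)$, so the hypothesis yields $|T_j|\le \mathsf D(n_sG)=\mathsf D^*(n_sG)=1+\sum_{i=s+1}^{r}(n_i/n_s-1)$. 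For the lift, observe that for each $h\in G_1$ one has $\sum_{g\in G_0,\,n_sg=h}y_g=\mathsf v_h(T)=\sum_{j=1}^k\mathsf v_h(T_j)$, so we may choose $y_{j,g}\in\mathbb N_0$ with $\sum_{j=1}^ky_{j,g}=y_g$ for each $g\in G_0$ and $\sum_{g\in G_0,\,n_sg=h}y_{j,g}=\mathsf v_h(T_j)$ for every $j$ and $h$. Setting $T'_j:=\prod_{g\in G_0}g^{n_sy_{j,g}}$ gives $\varphi(T'_j)=T_j$, hence $\sigma(T'_j)=\sigma(T_j)=0$ in $G$ and $T'_j\in\mathcal B(G_0)$; by construction $T'_1\cdots T'_k=S$.

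It remains to bound each $|T'_j|$. We have
\[
|T'_j|=n_s|T_j|\le n_s+\sum_{i=s+1}^{r}(n_i-n_s)=(1-r+s)n_s+\sum_{i=s+1}^{r}n_i\le \sum_{i=s+1}^{r}n_i,
\]
where the last inequality uses $r-s\ge 1$, which holds because $r\ge 2$ forces $s=\lfloor(r+1)/2\rfloor\le r-1$. Combined with the strict inequality ${\sepbeta}(G)>n_{s+1}+\ldots+n_r$ from Lemma~\ref{lowerbound}, this gives $|T'_j|<{\sepbeta}(G)=|A|$. Hence every atom appearing in any factorization of $T'_j\in\mathcal B(G_0)$ has length at most $|T'_j|\le |A|-1$, so $T'_j\in\mathcal B(G_0)_{|A|-1}$ for each $j$. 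Therefore $S=T'_1\cdots T'_k\in\mathcal B(G_0)_{|A|-1}\subseteq\mathsf q(\mathcal B(G_0)_{|A|-1})$, as claimed.

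The main delicate point is the lifting step: since several distinct elements of $G_0$ may share the same image in $G_1$ under $g\mapsto n_sg$, the fibre $\varphi^{-1}(T_j)$ is generally large, and one has to choose pre-images $T'_j$ simultaneously for all $j$ so that their product equals $S$ rather than merely another element of $\varphi^{-1}(T)$. The multiplicity-partitioning prescription above is exactly what makes this simultaneous choice coherent; everything else reduces to a clean numerical comparison between $n_s\mathsf D^*(n_sG)$ and the lower bound for ${\sepbeta}(G)$.
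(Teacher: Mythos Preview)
Your proof is correct and takes a somewhat different, more elementary route than the paper's. The paper factors $\varphi(S)$ in the \emph{quotient group} $\mathsf q(\mathcal B(G_1))$ using \emph{separating} atoms (via \eqref{generatingsubset}), obtaining $\varphi(S)=U_1\cdots U_\ell\cdot U_{\ell+1}^{-1}\cdots U_k^{-1}$ with each $U_i\in\mathcal A_{\sep}(G_1)$, and then bounds $|U_i|\le\mathsf D^*(n_sG)$ by Lemma~\ref{separatingatomlength}; the lift back to $G_0$ therefore involves inverses. You instead factor $\varphi(S)$ inside the \emph{monoid} $\mathcal B(G_1)$ into ordinary atoms and invoke the standing hypothesis $\mathsf D(n_sG)=\mathsf D^*(n_sG)$ directly for the length bound. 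Your approach avoids separating atoms altogether, keeps the factorization inverse-free, and in fact yields the slightly stronger conclusion $S\in\mathcal B(G_0)_{|A|-1}$ rather than just membership in its quotient group. The trade-off is that the paper's argument, via Lemma~\ref{separatingatomlength}, gives the $\mathsf D^*$ bound for separating atoms unconditionally, so \ref{cl1} as proved there does not itself consume the hypothesis $\mathsf D(n_sG)=\mathsf D^*(n_sG)$; your version does. For the present claim this is immaterial since the hypothesis is in force throughout Proposition~\ref{main}. Your treatment of the lifting step---partitioning the multiplicities $y_g$ over the fibres of $g\mapsto n_sg$ so that the preimages multiply back to $S$---is handled carefully and correctly.
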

\begin{proof}[Proof of \ref{cl1}]
	Since $\varphi(S)$ is a zero-sum sequence over $G_1$, it follows from  \eqref{generatingsubset} that we may factor $\varphi(S)=U_1\cdot\ldots\cdot U_{\ell}\cdot U_{\ell+1}^{-1}\cdot\ldots\cdot U_k^{-1}$, where $U_1,\ldots, U_{k}\in \mathcal A_{\mathrm{sep}}(G_1)$. Therefore by choosing suitable subsequences $\varphi^*(U_i)$ from the set $\varphi^{-1}(U_i)$ for each $i\in [1,k]$, we have
	\[
	S=\varphi^*(U_1)\cdot\ldots\cdot \varphi^*(U_{\ell})\cdot \varphi^*(U_{\ell+1})^{-1}\cdot\ldots\cdot \varphi^*(U_k)^{-1}.
	\]
	In view of  Lemmas \ref{separatingatomlength} and \ref{lowerbound}, for every $i\in [1,k]$, we have
 $$|\varphi^*(U_i)|=n_s|U_i|\le n_s\mathsf D^*(n_sG)= \sum_{j=s+1}^rn_j-(r-s-1)n_s\le \beta_{\mathrm{sep}}(G)-1=|A|-1\,.$$ Now the assertion follows.
	\qedhere(\ref{cl1})	
	\end{proof}

For each $l\in\mathbb{N}_{>0}$ and each $i\in [1, |G_0|]$, there exist $k_i^{(l)}\in \N_0$ and $x_i^{(l)}\in [0, n_s-1]$ such that
\begin{center}
    $lm_i=k_i^{(l)}n_s+x_i^{(l)}$.
\end{center}

\begin{claim}\label{cl2}
There exists some $i_0\in [1, |G_0|]$ such that $x_{i_0}^{(l)}\neq 0$ for any $l\in\mathbb{N}$ with $\gcd(l,n_s)=1$.
\end{claim}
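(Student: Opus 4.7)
The plan is to reduce Claim A2 to a statement about the multiplicities $m_i$ modulo $n_s$, and then derive a contradiction using Claim A1 together with the defining property of a separating atom.

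First, I would observe that for any fixed $i \in [1,|G_0|]$ and any $l \in \mathbb N$ with $\gcd(l,n_s) = 1$, the remainder $x_i^{(l)}$ vanishes precisely when $n_s \mid lm_i$, which, since $l$ is invertible modulo $n_s$, is equivalent to $n_s \mid m_i$. In particular, the condition ``$x_i^{(l)} \neq 0$ for every $l$ coprime to $n_s$'' is simply the condition $n_s \nmid m_i$, and it does not depend on $l$ at all. So the claim reduces to showing that at least one of the multiplicities $m_1,\ldots,m_{|G_0|}$ is not divisible by $n_s$.

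Next, I would argue by contradiction. Suppose that $n_s \mid m_i$ for every $i \in [1,|G_0|]$. Then the sequence $A = \prod_{i=1}^{|G_0|} g_i^{m_i}$ satisfies the hypothesis of Claim A1, namely $n_s \mid \mathsf v_g(A)$ for each $g \in G_0$. Applying Claim A1 yields $A \in \mathsf q(\mathcal B(G_0)_{|A|-1})$. But $A$ is a separating atom over $G_0$, so by the definition of $\mathcal A_{\sep}(G_0)$ in Definition \ref{groupatomdef} we must have $A \notin \mathsf q(\mathcal B(G_0)_{|A|-1})$, a contradiction. Consequently there exists some $i_0 \in [1,|G_0|]$ with $n_s \nmid m_{i_0}$, and by the reduction above this $i_0$ has the desired property.

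There is no real obstacle here beyond recognizing the reduction; the heart of the argument was already packaged into Claim A1, whose proof uses the bound from Lemmas \ref{separatingatomlength} and \ref{lowerbound} together with the hypothesis $\mathsf D(n_s G) = \mathsf D^*(n_s G)$. Everything in the present claim is a clean consequence: the arithmetic observation makes $l$ irrelevant, and Claim A1 together with the atomic-separating condition forces at least one $m_i$ to be non-divisible by $n_s$.
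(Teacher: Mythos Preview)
Your proof is correct and follows essentially the same approach as the paper: both reduce to showing that not every $m_i$ is divisible by $n_s$, invoke Claim~A1 to derive $A\in\mathsf q(\mathcal B(G_0)_{|A|-1})$ otherwise, and contradict the separating-atom property. The only cosmetic difference is that the paper first finds $i_0$ with $x_{i_0}^{(1)}\neq 0$ and then observes $x_{i_0}^{(l)}\equiv l\,x_{i_0}^{(1)}\pmod{n_s}$, whereas you phrase the reduction directly as $n_s\nmid m_{i_0}$.
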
	
\begin{proof}[Proof of \ref{cl2}]
If $x_i^{(1)}=0$ for every $i\in [1,|G_0|]$, then $A\in \mathcal B(G_0)$ with $n_s\t \mathsf v_g(S)$ for every $g\in G_0$. Then by \ref{cl1} $A\in \mathsf q(\mathcal B(G_0)_{|A|-1})$, contradicting that $A$ is a separating atom. So there exists some $i_0\in [1, |G_0|]$ with $x_{i_0}^{(1)}\neq 0$. If $\gcd(l,n_s)=1$, then since $x_{i_0}^{(l)}\equiv lx_{i_0}^{(1)}\pmod {n_s}$, we get that $x_{i_0}^{(l)}\neq 0$.
\qedhere[\ref{cl2}]
\end{proof}
	
Set
\begin{center}
    $A^{(l)}:=\prod_{i=1}^{|G_0|}g_i^{k_i^{(l)}n_s}$.
\end{center}
Then $\varphi(A^{(l)})\in\mathcal{F}(n_sG)$, and we can write $\varphi(A^{(l)})=X_0^{(l)}\cdot X_1^{(l)}$, where $X_1^{(l)}\in\mathcal{B}(n_sG)$ and $X_0^{(l)}$ is a zero-sum free sequence over $n_sG$. For suitable $\varphi^*(X_0^{(l)})\in \varphi^{-1}(X_0^{(l)})$ and $\varphi^*(X_1^{(l)})\in \varphi^{-1}(X_1^{(l)})$ we have $A^{(l)}=\varphi^*(X_0^{(l)})\varphi^*(X_1^{(l)})$. It follows that
\begin{equation}\label{X}
|\varphi^*(X_0^{(l)})|=n_s|X_0^{(l)}|\le n_s(\mathsf D(n_sG)-1)=n_s(\mathsf D^*(n_sG)-1)=\sum_{j=s+1}^rn_j-(r-s)n_s\,.
\end{equation}	

Set
\begin{center}
    $W^{(l)}=:\varphi^*(X_0^{(l)})\prod_{i=1}^{|G_0|}g_i^{x_i^{(l)}}$.
\end{center}
\begin{claim}\label{W}
    For any positive integer $l$ with $\gcd(l,n_s)=1$, we have $|W^{(l)}|\ge |A|$.
\end{claim}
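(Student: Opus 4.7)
The plan is to argue by contradiction. Assume $|W^{(l)}|<|A|$; the strategy is to deduce that $A\in\mathsf{q}(\mathcal{B}(G_0)_{|A|-1})$, contradicting the hypothesis that $A$ is a separating atom of length $\sepbeta(G)$.

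First, I would unwind the definitions to obtain the key identity
\[
A^l=W^{(l)}\cdot\varphi^*(X_1^{(l)})
\]
in $\mathcal{F}(G_0)$. Indeed, $A^l=\prod_{i=1}^{|G_0|}g_i^{k_i^{(l)}n_s+x_i^{(l)}}=A^{(l)}\cdot\prod_{i=1}^{|G_0|}g_i^{x_i^{(l)}}$, and substituting $A^{(l)}=\varphi^*(X_0^{(l)})\cdot\varphi^*(X_1^{(l)})$ yields the identity. Since $A^l$ and $\varphi^*(X_1^{(l)})$ are both zero-sum (the former as a power of $A$, the latter because $X_1^{(l)}\in\mathcal{B}(n_sG)$), so is $W^{(l)}$; thus $W^{(l)}\in\mathcal{B}(G_0)$.

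Under the assumption $|W^{(l)}|<|A|$, the zero-sum sequence $W^{(l)}$ factors into atoms each of length at most $|W^{(l)}|\le|A|-1$, whence $W^{(l)}\in\mathcal{B}(G_0)_{|A|-1}\subseteq\mathsf{q}(\mathcal{B}(G_0)_{|A|-1})$. Meanwhile, $\varphi^*(X_1^{(l)})$ has every multiplicity divisible by $n_s$ by construction, so \ref{cl1} gives $\varphi^*(X_1^{(l)})\in\mathsf{q}(\mathcal{B}(G_0)_{|A|-1})$. Multiplying the two memberships shows $A^l\in\mathsf{q}(\mathcal{B}(G_0)_{|A|-1})$. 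Applying \ref{cl1} a second time, this time to $A^{n_s}=\prod_{i=1}^{|G_0|}g_i^{n_sm_i}$, whose multiplicities are all divisible by $n_s$, one also obtains $A^{n_s}\in\mathsf{q}(\mathcal{B}(G_0)_{|A|-1})$.

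Finally, I would invoke $\gcd(l,n_s)=1$ via Bezout: there exist $a,b\in\mathbb{Z}$ with $al+bn_s=1$, and therefore $A=(A^l)^a\cdot(A^{n_s})^b\in\mathsf{q}(\mathcal{B}(G_0)_{|A|-1})$, contradicting $A\in\mathcal{A}_{\sep}(G_0)$. Hence $|W^{(l)}|\ge|A|$, as claimed. The substantive content of the argument is the twofold use of \ref{cl1}, once on the subsequence $\varphi^*(X_1^{(l)})$ of $A^l$ and once on $A^{n_s}$ itself; once these two membership facts are in place, combining them via Bezout is formal, so there is no serious obstacle beyond recognizing this double application.
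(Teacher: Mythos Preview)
Your proof is correct and follows essentially the same approach as the paper: both derive the identity $A^l=W^{(l)}\cdot\varphi^*(X_1^{(l)})$, apply \ref{cl1} to $\varphi^*(X_1^{(l)})$ and to an $n_s$-th power of $A$, and then use a B\'ezout relation for $l$ and $n_s$ to conclude that $|W^{(l)}|<|A|$ would force $A\in\mathsf q(\mathcal B(G_0)_{|A|-1})$. The paper writes the B\'ezout step as $ll'=1+h^{(l)}n_s$ and substitutes directly, whereas you first isolate $A^l,A^{n_s}\in\mathsf q(\mathcal B(G_0)_{|A|-1})$ and then combine; this is a purely cosmetic difference.
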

\begin{proof}[Proof of \ref{W}]
Note that $W^{(l)}=\varphi^*(X_1^{(l)})^{-1}A^{(l)}\prod_{i=1}^{|G_0|}g_i^{x_i^{(l)}}=\varphi^*(X_1^{(l)})^{-1}A^l\in\mathcal{B}(G_0)$. If $\gcd(l,n_s)=1$, then there exist some $l'\in [1,n_s-1]$ and $h^{(l)}\in \N_0$ such that  $ll'=1+h^{(l)}n_s$, so
\[
A=A^{ll'-h^{(l)}n_s}=(A^{h^{(l)}n_s})^{-1}(A^l)^{l'}=(A^{h^{(l)}n_s})^{-1}(\varphi^*(X_1^{(l)})\cdot W^{(l)})^{l'}\,.
\]
By \ref{cl1}, we have  $A^{h^{(l)}n_s}\in \mathsf q(\mathcal B(G_0)_{|A|-1})$ and $\varphi^*(X_1^{(l)})\in \mathsf q(\mathcal B(G_0)_{|A|-1})$. Since $A$ is a separating atom, we obtain the result.
\qedhere[\ref{W}]
\end{proof}	

Since $m_i\le \ord(g_i)-1$, we have that $k_i^{(l)}\le l\ord(g_i)-1$. Note that
\begin{center}
$\sigma\left(\prod_{i=1}^{|G_0|}g_i^{(l\ord(g_i)-k_i^{(l)})n_s-x_i^{(l)}}\right)=\sigma\left(\prod_{i=1}^{|G_0|}g_i^{l\ord(g_i)n_s}\right)-\sigma(A^l)=0$.
\end{center}
Therefore there exist $t_1,\ldots, t_{|G_0|}\in \N$ with $\sigma\left(\prod_{i=1}^{|G_0|}g_i^{t_in_s-x_i^{(l)}}\right)=0$. Choose a tuple $(t_1^{(l)},\ldots, t_{|G_0|}^{(l)})\in \N^{|G_0|}$ with $\sigma\left(\prod_{i=1}^{|G_0|}g_i^{t_i^{(l)}n_s-x_i^{(l)}}\right)=0$ such that $\sum_{j=1}^{|G_0|}t_j^{(l)}$ is minimal. Set
\begin{center}
    $V^{(l)}=\prod_{i=1}^{|G_0|}g_i^{t_i^{(l)}n_s-x_i^{(l)}} \quad \text{ and }\quad   Y^{(l)}=\prod_{i=1}^{|G_0|}g_i^{(t_i^{(l)}-1)n_s}.$
\end{center}
\begin{claim}\label{V}
$|V^{(l)}|\ge |A|$ for any positive integer $l$ with $\gcd(l,n_s)=1$.
\end{claim}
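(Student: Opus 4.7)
The plan is to mirror the argument used for Claim \ref{W}. First I would compute
\[
A^l\cdot V^{(l)}=\prod_{i=1}^{|G_0|}g_i^{(k_i^{(l)}+t_i^{(l)})n_s},
\]
using $lm_i=k_i^{(l)}n_s+x_i^{(l)}$ together with the definition of $V^{(l)}$. This is a zero-sum sequence over $G_0$ all of whose multiplicities are divisible by $n_s$, hence Claim \ref{cl1} places $A^l\cdot V^{(l)}\in \mathsf q(\mathcal B(G_0)_{|A|-1})$.

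Next, since $\gcd(l,n_s)=1$, I would pick $l'\in [1,n_s-1]$ and $h^{(l)}\in\N_0$ with $ll'=1+h^{(l)}n_s$, which gives the identity
\[
A=(A^l)^{l'}\cdot (A^{n_s})^{-h^{(l)}}
\]
in $\mathsf q(\mathcal B(G_0))$. Since $A^{n_s}$ has all multiplicities divisible by $n_s$, Claim \ref{cl1} yields $A^{n_s}\in \mathsf q(\mathcal B(G_0)_{|A|-1})$ as well.

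For the contradiction step, suppose $|V^{(l)}|<|A|$. Since $V^{(l)}\in\mathcal B(G_0)$, every atom appearing in a factorization of $V^{(l)}$ has length at most $|V^{(l)}|<|A|$, so $V^{(l)}\in\mathcal B(G_0)_{|A|-1}\subseteq\mathsf q(\mathcal B(G_0)_{|A|-1})$. Combining this with the first observation gives $A^l\in \mathsf q(\mathcal B(G_0)_{|A|-1})$, and then the displayed identity for $A$ forces $A\in \mathsf q(\mathcal B(G_0)_{|A|-1})$, contradicting the fact that $A$ is a separating atom. No substantial obstacle arises here; the structure is strictly parallel to Claim \ref{W}, with $V^{(l)}$ playing the role occupied there by $W^{(l)}$, and the only inputs required are Claim \ref{cl1} together with the coprimality trick that converts a power of $A$ into $A$ itself modulo $\mathsf q(\mathcal B(G_0)_{|A|-1})$. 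The minimality condition built into the choice of $(t_1^{(l)},\ldots,t_{|G_0|}^{(l)})$ is not used in the present claim; it is presumably needed only in later claims of the ongoing proof.
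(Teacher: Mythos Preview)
Your argument is correct and is essentially the same as the paper's: both compute $A^lV^{(l)}=\prod_i g_i^{(k_i^{(l)}+t_i^{(l)})n_s}$ and $A^{h^{(l)}n_s}$, apply Claim~\ref{cl1} to put these into $\mathsf q(\mathcal B(G_0)_{|A|-1})$, and use $ll'=1+h^{(l)}n_s$ to express $A$ in terms of these factors and $V^{(l)}$, forcing $|V^{(l)}|\ge |A|$. Your observation that the minimality of $\sum_j t_j^{(l)}$ is not used here (only later, in Claim~\ref{WV}) is also correct.
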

\begin{proof}[Proof of \ref{V}]
If $\gcd(l,n_s)=1$, then there exist some $l'\in [1,n_s-1]$ and $h^{(l)}\in \N_0$ such that $ll'=1+h^{(l)}n_s$, implying that
\begin{center}
$A=A^{ll'-h^{(l)}n_s}=(A^{h^{(l)}n_s})^{-1}(A^l)^{l'}=(A^{h^{(l)}n_s})^{-1}(V^{(l)})^{-l'}(A^lV^{(l)})^{l'}\,.$
\end{center}
By \ref{cl1}, we have  $A^{h^{(l)}n_s}\in \mathsf q(\mathcal B(G_0)_{|A|-1})$ and $A^lV^{(l)}=\prod_{i=1}^{|G_0|}g_i^{(k_i^{(l)}+t_i^{(l)})n_s}\in \mathsf q(\mathcal B(G_0)_{|A|-1})$. Since $A$ is a separating atom, we obtain the result. \qedhere[\ref{V}]
\end{proof}

\begin{claim}\label{WV}
For any $l\in\mathbb{N}_{>0}$ we have $|W^{(l)}|+|V^{(l)}|\le n_s(|G_0|+2s-2r)+2\sum_{j=s+1}^rn_j.$
\end{claim}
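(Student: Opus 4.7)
The plan is to compute $|W^{(l)}|+|V^{(l)}|$ directly from the definitions, obtaining a quantity that splits naturally into two parts: one governed by $|\varphi^*(X_0^{(l)})|$ (already bounded by equation \eqref{X}) and one governed by $n_s\sum_{i=1}^{|G_0|} t_i^{(l)}$, which I will control via the minimality built into the definition of the tuple $(t_1^{(l)},\ldots,t_{|G_0|}^{(l)})$ together with the hypothesis $\mathsf D(n_sG)=\mathsf D^*(n_sG)$.

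First I would observe that
\[
|W^{(l)}|=|\varphi^*(X_0^{(l)})|+\sum_{i=1}^{|G_0|}x_i^{(l)},\qquad |V^{(l)}|=n_s\sum_{i=1}^{|G_0|}t_i^{(l)}-\sum_{i=1}^{|G_0|}x_i^{(l)},
\]
so the cross-terms cancel and we get $|W^{(l)}|+|V^{(l)}|=|\varphi^*(X_0^{(l)})|+n_s\sum_{i=1}^{|G_0|}t_i^{(l)}$. By \eqref{X}, the first summand is at most $\sum_{j=s+1}^r n_j-(r-s)n_s$.

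The key step is bounding $\sum_i t_i^{(l)}$, and this is where I expect the main work (though not a serious obstacle). The plan is to show that $\varphi(Y^{(l)})=\prod_{i=1}^{|G_0|}(n_s g_i)^{t_i^{(l)}-1}$ is zero-sum free over $n_s G$. Assume for contradiction that some subsequence $\prod_i(n_s g_i)^{a_i}$ of $\varphi(Y^{(l)})$ with $0\le a_i\le t_i^{(l)}-1$ and $\sum_i a_i\ge 1$ is zero-sum. Then $\sum_i a_i n_s g_i=0$, hence the tuple $(t_i^{(l)}-a_i)_{i=1}^{|G_0|}\in\mathbb N^{|G_0|}$ still satisfies $\sigma\big(\prod_i g_i^{(t_i^{(l)}-a_i)n_s-x_i^{(l)}}\big)=0$, and the multiplicities $(t_i^{(l)}-a_i)n_s-x_i^{(l)}\ge n_s-(n_s-1)=1$ remain non-negative since $t_i^{(l)}-a_i\ge 1$ and $x_i^{(l)}\le n_s-1$. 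This contradicts the minimality of $\sum_j t_j^{(l)}$, so $\varphi(Y^{(l)})$ is indeed zero-sum free.

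Finally, using the standing hypothesis $\mathsf D(n_sG)=\mathsf D^*(n_sG)$ and the decomposition $n_sG\cong C_{n_{s+1}/n_s}\oplus\cdots\oplus C_{n_r/n_s}$, I would conclude
\[
\sum_{i=1}^{|G_0|}(t_i^{(l)}-1)=|\varphi(Y^{(l)})|\le \mathsf D(n_sG)-1=\mathsf D^*(n_sG)-1=\frac{1}{n_s}\sum_{j=s+1}^r n_j-(r-s),
\]
whence $n_s\sum_i t_i^{(l)}\le \sum_{j=s+1}^r n_j-(r-s)n_s+n_s|G_0|$. Adding the bound on $|\varphi^*(X_0^{(l)})|$ gives exactly
\[
|W^{(l)}|+|V^{(l)}|\le 2\sum_{j=s+1}^r n_j-2(r-s)n_s+n_s|G_0|=n_s(|G_0|+2s-2r)+2\sum_{j=s+1}^r n_j,
\]
as claimed. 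The only subtle point is checking that the multiplicities remain non-negative after the proposed reduction, which is why the choice $x_i^{(l)}\in[0,n_s-1]$ (forcing $t_i^{(l)}\ge 1$ to suffice) is essential.
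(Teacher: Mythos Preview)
Your proposal is correct and matches the paper's proof essentially step for step: the paper also shows $\varphi(Y^{(l)})$ is zero-sum free by the same minimality contradiction, then uses $\mathsf D(n_sG)=\mathsf D^*(n_sG)$ to bound $\sum_i(t_i^{(l)}-1)$, and combines with \eqref{X} after the identical cancellation of $\sum_i x_i^{(l)}$.
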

\begin{proof}[Proof of \ref{WV}]
First, we show that $\varphi(Y^{(l)})$ is zero-sum free. Assume for contradiction that $\prod_{i=1}^{|G_0|}(n_sg_i)^{t_i'}$ (with $t_i'\in [0, t_i^{(l)}-1]$ for each $i\in [1,|G_0|]$) is a nontrivial zero-sum subsequence of $\varphi(Y^{(l)})$. So $\sum_{i=1}^{|G_0|}t_i'>0$, $t_i^{(l)}-t_i'\ge 1$, and
\begin{center}
    $\sigma\left(\prod_{i=1}^{|G_0|}g_i^{(t_i^{(l)}-t_i')n_s-x_i^{(l)}}\right)=\sigma(V^{(l)})-\sigma\left(\prod_{i=1}^{|G_0|}(n_sg_i)^{t_i'}\right)=0\,.$
\end{center}
We obtain a contradiction to the minimality of $\sum_{i=1}^{|G_0|}t_i^{(l)}$, so $\varphi(Y^{(l)})$ is zero-sum free. Therefore it follows from our assumption that
\begin{equation}\label{eq2}
n_s(\sum_{i=1}^{|G_0|}(t_i^{(l)}-1))=|Y^{(l)}|=n_s|\varphi(Y^{(l)})|\le n_s(\mathsf D(n_sG)-1)=\sum_{j=s+1}^{r}n_j-(r-s)n_s\,.
\end{equation}
Combining \eqref{X} and \eqref{eq2} implies that for any positive $l$
\begin{center}
$|W^{(l)}|+|V^{(l)}|=\left(|\varphi^*(X_0^{(l)})|+\sum_{i=1}^{|G_0|}x_i^{(l)}\right)+\left(n_s\sum_{i=1}^{|G_0|}t_i^{(l)}-\sum_{i=1}^{|G_0|}x_i^{(l)}\right)\le$\\
$\le \sum_{j=s+1}^rn_j-(r-s)n_s+\sum_{j=s+1}^rn_j-(r-s)n_s+|G_0|n_s=n_s(|G_0|+2s-2r)+2\sum_{j=s+1}^rn_j.$
\end{center}
\qedhere[\ref{WV}]
\end{proof}
Suppose that the minimal prime divisor $p_1$ of $n_1$ is strictly greater than the minimal prime divisor $p$ of $n_s$. For \ref{clm:m} and \ref{cl:n1} we make the following assumptions:
\begin{equation}\label{asptn}
    {\mathsf r}(G)={\mathsf r}(G)=r\text{ is even}\quad\text{and}\quad\frac{n_{s}}{p_1}+n_{s+1}+\ldots+n_r<|A|\leq  \frac{n_{s}}{p}+n_{s+1}+\ldots+n_r.
\end{equation}

\begin{claim}\label{clm:m}
If \eqref{asptn} holds, then there exists $m\in\mathbb{N}$ with $\gcd(n_1,m)=1$ such that $A^m\in \mathsf q(\mathcal B(G_0)_{|A|-1}))$.
\end{claim}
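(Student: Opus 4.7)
The goal is to produce an integer $m$ coprime to $n_1$ with $A^m\in \mathsf q(\mathcal B(G_0)_{|A|-1})$. The first observation is that this can be recast as a statement about the order of $A$ in the quotient group $\mathcal Q:=\mathsf q(\mathcal B(G_0))/\mathsf q(\mathcal B(G_0)_{|A|-1})$. Indeed, $A^{n_s}=\prod_{i=1}^{|G_0|} g_i^{n_s m_i}$ has every multiplicity divisible by $n_s$ and is zero-sum over $G_0$, so \ref{cl1} gives $A^{n_s}\in \mathsf q(\mathcal B(G_0)_{|A|-1})$. Thus the image of $A$ in $\mathcal Q$ has a well-defined order $N$ with $N\mid n_s$, and the set $\{m\in\mathbb Z\colon A^m\in \mathsf q(\mathcal B(G_0)_{|A|-1})\}=N\mathbb Z$. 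Since $kN$ is coprime to $n_1$ for some $k$ if and only if $\gcd(N,n_1)=1$, the claim reduces to showing $\gcd(N,n_1)=1$; then one can simply take $m=N$.

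Assume for contradiction that some prime $q$ divides both $N$ and $n_1$, so in particular $q\geq p_1$ and $q\mid n_s$. I would apply Lemma~\ref{ba} with $\alpha=q$, $\gamma=n_s/q$, and an appropriate $\beta$ coprime to $q$, to produce an integer $l\in[1,n_s-1]$ with $\gcd(l,n_s)=1$ and $l$ satisfying a prescribed congruence modulo $q$. For such an $l$, \ref{W} and \ref{V} give $|W^{(l)}|,|V^{(l)}|\geq |A|$, and, together with Proposition~\ref{main}.1 (which forces $|G_0|=r+1$ so that the factor $|G_0|+2s-2r$ in \ref{WV} equals $1$), \ref{WV} gives $|W^{(l)}|+|V^{(l)}|\leq n_s+2\sum_{j>s}n_j$. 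The plan is to use the assumption $q\mid N$ to sharpen one of the two lower bounds on $|W^{(l)}|,|V^{(l)}|$: the divisibility of $N$ by $q$ should let one exhibit an element of $\mathsf q(\mathcal B(G_0)_{|A|-1})$ that is congruent to a proper power of a zero-sum "fractional" version of $W^{(l)}$ or $V^{(l)}$, thereby producing a strictly shorter representative in the quotient than either sequence would otherwise allow.

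Combining the sharpened lower bound with \ref{WV} should then yield $|A|\leq \tfrac{n_s}{q}+\sum_{j>s}n_j\leq \tfrac{n_s}{p_1}+\sum_{j>s}n_j$, contradicting \eqref{asptn}. The main obstacle I anticipate is precisely the sharpening in the previous paragraph: we have only the abstract divisibility $q\mid N$ at our disposal, and we must convert this into a concrete constructive statement about length. Concretely, I expect this requires revisiting the decomposition $A^l=\varphi^\ast(X_1^{(l)})\cdot W^{(l)}$ and modifying $\varphi^\ast(X_1^{(l)})$ by subtracting off a zero-sum combination of $q$-fold copies coming from the $q$-torsion subgroup of $\langle [A]\rangle\leq \mathcal Q$, so that roughly $n_s/q$ terms are absorbed into the error. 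Making this construction rigorous—especially verifying that it respects the constraint $\sigma=0$ while staying inside $\mathcal F(G_0)$—will be the technical heart of the proof.
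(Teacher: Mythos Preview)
Your reformulation in terms of the order $N$ of $[A]$ in $\mathcal Q=\mathsf q(\mathcal B(G_0))/\mathsf q(\mathcal B(G_0)_{|A|-1})$ is correct and natural, and the observation that $N\mid n_s$ via \ref{cl1} is fine. However, the proof has a genuine gap: you never carry out the ``sharpening'' of the lower bound on $|W^{(l)}|$ or $|V^{(l)}|$, and the sketch you give (modify $\varphi^\ast(X_1^{(l)})$ using ``$q$-fold copies coming from the $q$-torsion subgroup of $\langle[A]\rangle$'') does not translate into a concrete combinatorial statement. The hypothesis $q\mid N$ is information about the quotient group, not about lengths; nothing in the machinery of \ref{W}--\ref{WV} responds to it, and you do not specify what $\beta$ to feed into Lemma~\ref{ba} with $\alpha=q$. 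As written the argument stops exactly where the real content begins, and you acknowledge this yourself.

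The paper's proof is quite different and does not pass through $N$ at all. It writes $|A|=bd+\sum_{j>s}n_j$ with $d=\gcd(|A|,n_s)$ and $\gcd(b,n_s/d)=1$, then applies Lemma~\ref{ba} with $\alpha=n_s/d$, $\beta=b$, $\gamma=d$ to get $l$ coprime to $n_s$ with $lb\equiv 1\pmod{n_s/d}$; this forces $|W^{(l)}|\equiv d\pmod{n_s}$. The range inequality from \ref{W}, \ref{V}, \ref{WV} and \eqref{asptn} then pins $|W^{(l)}|$ down \emph{exactly} as $d+\sum_{j>s}n_j$, whence $b=1$ and $n_s/p_1<d\le n_s/p$. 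One now sets $m=n_s/d<p_1$, so $\gcd(m,n_1)=1$ automatically. Finally, for $l=m$ one has $|W^{(m)}|\equiv|V^{(m)}|\equiv 0\pmod{n_s}$, and \ref{WV} forces $\min\{|W^{(m)}|,|V^{(m)}|\}\le\sum_{j>s}n_j<|A|$, which gives $A^m\in\mathsf q(\mathcal B(G_0)_{|A|-1})$ directly. The decisive idea you are missing is to work with $d=\gcd(|A|,n_s)$ rather than with a hypothetical prime divisor of $N$; this makes the congruence $|W^{(l)}|\equiv d\pmod{n_s}$ available and lets the length bounds do all the work.
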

\begin{proof}[Proof of \ref{clm:m}]
Let $d=\gcd(|A|,n_{s})$. Then $|A|=n_{s+1}+\ldots+n_r+bd$ for some $b\in \N$ with $\gcd(b,\frac{n_{s}}{d})=1$. Applying Lemma \ref{ba} with $\alpha =\frac{n_{s}}{d}$, $\beta =b$, and $\gamma=d$, there exists $l\in [1, n_s-1]$ such that $\gcd(l,n_s) = 1$ and $\ell b \equiv 1 \mod \frac{n_{s}}{d}$, whence
\begin{align}\label{ns}
    \ell bd\equiv d \mod n_s\ \text{ and }\ |W^{(l)}|\equiv l|A|\equiv lbd\equiv d  \mod n_s.
\end{align}
Since $r$ is even and $|G_0|\leq r+1$, it follows from \ref{W}, \ref{V} and \ref{WV} that
\begin{center}
    $\frac{n_s}{p_1}+n_{s+1}+\ldots+n_r< |A|\le |W^{(l)}|= (|W^{(l)}| + |V^{(l)}|)-|V^{(l)}|\leq$\\ $\leq2(n_{s+1}+\ldots+n_r)+n_s-|A|< n_{s+1}+\ldots+n_r+n_s-\frac{n_s}{p_1}$.
\end{center}
Thus \eqref{ns} implies  that
\begin{center}
    $|W^{(l)}|=n_{s+1}+\ldots+n_r+d\geq |A|=n_{s+1}+\ldots+n_r+bd.$
\end{center}
It follows that $b=1$ and
\begin{center}
    $\frac{n_{s}}{p_1}+n_{s+1}+\ldots+n_r<n_{s+1}+\ldots+n_r+d=|A|\leq  \frac{n_{s}}{p}+n_{s+1}+\ldots+n_r,$ so
\end{center}
\begin{equation}\label{eq:d}
    \frac{n_{s}}{p_1}<d\leq  \frac{n_{s}}{p}.
\end{equation}
Introduce the notation $m:=\frac{n_s}{d}$. Then $m<p_1$ by \eqref{eq:d}, so $\gcd(n_1,m)=1$.

By (\ref{WV}) we have
\begin{center}
    $\min\{|W^{(m)}|,|V^{(m)}|\}\leq \frac{|W^{(m)}V^{(m)}|}{2}\leq n_{s+1}+\ldots+n_r+\frac{n_s}{2}\,.$
\end{center}
Note that
\begin{center}
$|W^{(m)}|\equiv |A^{m}|=m|A|=m(n_{s+1}+\ldots+n_r+\frac{n_s}{m})\equiv 0  \ \mod n_s\,.$
\end{center}
Since $|W^{(m)}V^{(m)}|\equiv 0  \ \mod n_s$, we have $|V^{(m)}|\equiv 0  \ \mod n_s$. Therefore
\begin{center}
$\min\{|W^{(m)}|,|V^{(m)}|\}\leq n_{s+1}+\ldots+n_r<\frac{n_s}{p_1}+n_{s+1}+\ldots+n_r< |A|,$
\end{center}
so $W^{(m)}\in \mathsf q(\mathcal B(G_0)_{|A|-1}) \text{ or}\ V^{(m)}\in \mathsf q(\mathcal B(G_0)_{|A|-1}).$ Moreover, $\varphi^*(X_1^{(m)})\in \mathsf q(\mathcal B(G_0)_{|A|-1})$ and $(W^{(m)}V^{(m)})\in \mathsf q(\mathcal B(G_0)_{|A|-1})$ by \ref{cl1}. So the equality
\begin{center}
    $A^{m}=W^{(m)}\varphi^*(X_1^{(m)})
	=(W^{(m)}V^{(m)})(V^{(m)})^{-1}\varphi^*(X_1^{(m)})$
\end{center}
shows that $A^{m}\in \mathsf q(\mathcal B(G_0)_{|A|-1}).$
\qedhere[\ref{clm:m}]
\end{proof}

\begin{claim}\label{cl:n1}
If \eqref{asptn} holds, then $A^{n_1}\in \mathsf q(\mathcal B(G_0)_{|A|-1})$.
\end{claim}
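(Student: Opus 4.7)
The plan is to adapt the strategy of Claim \ref{clm:m} to the choice $l = n_1$. Starting from the decomposition $A^{n_1} = W^{(n_1)} \cdot \varphi^*(X_1^{(n_1)})$, and observing that $\varphi^*(X_1^{(n_1)})$ already lies in $\mathsf q(\mathcal B(G_0)_{|A|-1})$ by \ref{cl1} (all of its multiplicities are divisible by $n_s$), it suffices to place $W^{(n_1)}$ (or, by the symmetric relation $V^{(n_1)} \cdot A^{n_1} \in \mathsf q(\mathcal B(G_0)_{|A|-1})$, the sequence $V^{(n_1)}$) in $\mathsf q(\mathcal B(G_0)_{|A|-1})$. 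Note that the lower bounds \ref{W} and \ref{V} are not directly available since $\gcd(n_1, n_s) = n_1 > 1$, but \ref{cl1} and \ref{WV} remain valid at $l = n_1$.

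Next I would compute residues modulo $n_s$. Using the equality $|A| = d + \sum_{j > s} n_j$ obtained in the proof of \ref{clm:m} (with $b = 1$, $d = n_s/m$ and $p \le m < p_1$), the factorization $n_s = dm$, and the consequence $n_1 \mid d$ of $\gcd(m, n_1) = 1$, I would derive $|W^{(n_1)}| \equiv n_1 d \equiv r d \pmod{n_s}$ and $|V^{(n_1)}| \equiv (m - r)d \pmod{n_s}$, where $r := n_1 \bmod m \in [1, m - 1]$ is nonzero since $\gcd(m, n_1) = 1$ and $m \ge 2$. Combining with \ref{WV}, namely $|W^{(n_1)}| + |V^{(n_1)}| \le n_s + 2\sum_{j > s} n_j$, and writing $|W^{(n_1)}| = rd + k_W n_s$ and $|V^{(n_1)}| = (m - r)d + k_V n_s$ with nonnegative integers $k_W, k_V$ satisfying $k_W + k_V \le 2\sum_{j > s} n_j/n_s$, a direct comparison with $|A| = d + \sum_{j > s} n_j$ will show that whenever $\min(k_W, k_V) < \sum_{j > s} n_j/n_s$ the corresponding length is strictly less than $|A|$. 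In that case $W^{(n_1)}$ or $V^{(n_1)}$ already lies in $\mathcal B(G_0)_{|A|-1}$, and then $A^{n_1} \in \mathsf q(\mathcal B(G_0)_{|A|-1})$ follows exactly as at the end of the proof of \ref{clm:m}.

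The hard part will be the borderline case $k_W = k_V = \sum_{j > s} n_j/n_s$, which can only occur when $r = 1$ or $m - r = 1$ and produces one of $W^{(n_1)}, V^{(n_1)}$ (both, in the symmetric sub-case $m = 2$, $r = 1$) of length exactly $|A|$. To dispose of this I would argue that this extremal sequence cannot itself be a separating atom of length $|A|$: exploiting the rigidity coming from Proposition \ref{main}.1 (which forces $|\supp(A)| = r + 1$), the sharp equality $|A| = d + \sum_{j > s} n_j$, and the explicit form $W^{(n_1)} = \varphi^*(X_0^{(n_1)}) \prod_{i} g_i^{x_i^{(n_1)}}$ coming from the construction, one can compare the supports and multiplicity patterns of $W^{(n_1)}$ and $A$ to produce a proper atomic factorization of $W^{(n_1)}$ into atoms of length strictly less than $|A|$, placing $W^{(n_1)}$ in $\mathcal B(G_0)_{|A|-1}$ and completing the argument.
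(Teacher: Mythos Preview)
Your approach diverges sharply from the paper's and contains a real gap. The paper does not revisit the $W^{(l)}$--$V^{(l)}$ machinery with $l=n_1$ at all; instead it passes to the subgroup $n_1G$. Writing $B=\prod_i (n_1 g_i)^{m_i}\in\mathcal B(n_1G_0)$ and invoking \eqref{generatingsubset}, one has $B\in\mathsf q\big(\mathcal B(n_1G_0)_{\sepbeta(n_1G)}\big)$; each factor lifts (by multiplying multiplicities by $n_1$) to a zero-sum sequence over $G_0$ of length at most $n_1\sepbeta(n_1G)$. The standing hypothesis of Proposition~\ref{main}.2, namely $\mathsf D(n_iG)=\mathsf D^*(n_iG)$ for every $i\in[s,r]$, then lets Lemma~\ref{lemain} be applied to $n_1G$, yielding $n_1\sepbeta(n_1G)\le n_{s+1}+\ldots+n_r<|A|$. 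No case analysis is needed, and the extra Davenport hypothesis---which your argument never uses---is precisely what powers the step.

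Your route, by contrast, has a genuine hole in the borderline case. Write $\rho:=n_1 \bmod m\in[1,m-1]$ (your symbol $r$ collides with the rank). With $k_W=k_V=N$ one gets $|W^{(n_1)}|=\rho d+Nn_s$ and $|V^{(n_1)}|=(m-\rho)d+Nn_s$, and nothing in the inequalities forces $\rho\in\{1,m-1\}$. Whenever $2\le\rho\le m-2$ (possible once $m\ge 4$; e.g.\ $p=2$, $p_1=7$, $m=5$, $n_1=7$ gives $\rho=2$), both lengths strictly exceed $|A|$ and you have no mechanism to place either $W^{(n_1)}$ or $V^{(n_1)}$ in $\mathsf q(\mathcal B(G_0)_{|A|-1})$. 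Even in your identified sub-case $\rho\in\{1,m-1\}$, the sketch ``compare the supports and multiplicity patterns \ldots to produce a proper atomic factorization'' is not a proof: $W^{(n_1)}$ can perfectly well have full support $G_0$, and Proposition~\ref{main}.1 only says that a separating atom of maximal length must have support of size $r+1$---it does not prevent $W^{(n_1)}$ from being such an atom.
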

\begin{proof}[Proof of \ref{cl:n1}]
Since $B:=\prod_{i=1}^{|G_0|}(n_1g_i)^{m_i}\in \mathcal B(n_1G_0)$, by \eqref{generatingsubset} we have $B\in \mathsf q(\mathcal B( n_1G_0)_{{\sepbeta}(n_1G)})$. Suppose that $B=B_1\cdot\ldots\cdot B_{\ell}\cdot B_{\ell+1}^{-1}\cdot\ldots\cdot B_t^{-1}$, where each $|B_i|\leq \beta_{\mathrm {sep}}(n_1G)$. Thus there exist $A_1,\ldots, A_t$ with each $|A_i|=n_1|B_i|\leq n_1{\sepbeta}(n_1G)$ such that
\begin{center}
$A^{n_1}=A_1\cdot\ldots\cdot A_{\ell}\cdot A_{\ell+1}^{-1}\cdot\ldots\cdot A_t^{-1}\in \mathsf q(\mathcal B(G_0)_{n_1\sepbeta(n_1G)})$
\end{center}
Note that $n_1G\cong C_{n_2/n_1}\oplus \ldots\oplus C_{n_r/n_1}$. We have
\begin{center}
    $\mathsf D(\frac{n_j}{n_1}n_1G)=\mathsf D(n_jG)=\mathsf D^*(n_jG)=\mathsf D^*(\frac{n_j}{n_1}n_1G) \text{ for every }j\in [s,r]\,.$
\end{center}
In particular, it holds for $s'=\left\lfloor \frac{{\mathsf r}(n_1G)+1}{2} \right\rfloor+r-{\mathsf r}(n_1G)\ge s+1$. By Lemma \ref{lemain},
\begin{center}
    $n_1{\sepbeta}(n_1G)=n_{s'}+n_{s'+1}+\ldots+n_{r}\le  n_{s+1}+n_{s+2}+\ldots+n_{r}\le  {\sepbeta}(G)-1=|A|-1,$
\end{center}
which implies that $A^{n_1}\in \mathsf q(\mathcal B(G_0)_{|A|-1})$.
\qedhere[\ref{cl:n1}]
\end{proof}

Now we can prove our main assertions.

1.  Combining \ref{W}, \ref{V}, \ref{WV} and Lemma \ref{lowerbound} yields that
\[n_s\left(\frac{|G_0|}{2}+\left\lfloor\frac{r+1}{2}\right\rfloor-r\right)+\sum_{j=s+1}^rn_j\geq |A|={\sepbeta}(G)\\
\ge \begin{cases}
n_s+n_{s+1}+\ldots+n_r,&\mbox{ if }r \mbox{ is odd}, \\
\frac{n_{s}}{p_1}+n_{s+1}+\ldots+n_r,&\mbox{ if }r\mbox{ is even},\\
\end{cases}\]
where $p_1$ is the minimal prime divisor of $n_1$. Assume to the contrary that $|G_0|\leq r$. If  $r$ is odd, then $n_s\leq n_s(\frac{r}{2}+\frac{r+1}{2}-r)=\frac{n_s}{2}$, a contradiction. If  $r$ is even, then $\frac{n_{s}}{p_1}\leq n_s(\frac{r}{2}+\frac{r}{2}-r)=0$, a contradiction. Therefore $|G_0|=r+1$.

2. Suppose that $r$ is even. By Lemma \ref{lemain} and Lemma \ref{lowerbound} we have that
\begin{center}
    $\frac{n_{s}}{p_1}+n_{s+1}+\ldots+n_r\leq {\sepbeta}(G)=|A|\leq  \frac{n_{s}}{p}+n_{s+1}+\ldots+n_r,$
\end{center}
If $p=p_1$, then we are done. Assume that $p<p_1$ and assume for contradiction that
\begin{center}
    $\frac{n_{s}}{p_1}+n_{s+1}+\ldots+n_r<|A|\leq  \frac{n_{s}}{p}+n_{s+1}+\ldots+n_r$.
\end{center}
Then \eqref{asptn} is satisfied. Since $\gcd(m,n_1)=1$, there exist $\lambda_1,\lambda_2\in\mathbb{Z}$ such that $\lambda_1n_1+\lambda_2m=1$. Therefore $A=(A^{n_1})^{\lambda_1}(A^m)^{\lambda_2}$, so by \ref{clm:m} and \ref{cl:n1} $A\in\mathsf{q}(\mathcal{B}(G_0)_{|A|-1})$, hence $A$ is not a separating atom over $G_0$. The contradiction shows that ${\sepbeta}(G)=\frac{n_{s}}{p_1}+n_{s+1}+\ldots+n_r$.
\end{proof}
\begin{proof}[Proof of Theorem \ref{thm:rank4}]
	Since $\mathsf{r}(G)=4$, we obtain $\mathsf{r}(n_iG)\le 2$ for $i\in [2,4]$, whence $\mathsf D(n_iG)=\mathsf D^*(n_iG)$ by  Lemma \ref{Davenport}.
	The assertion now follows  from  Proposition \ref{main}.2.
\end{proof}
Finally, we mention the following conjecture.
\begin{conjecture}\label{r+1}
Let $G=C_{n_1}\oplus \ldots\oplus C_{n_r}$ with $1<n_1\t \ldots \t n_r$. Let $A$ be a separating atom over $G_0$ with $|A|={\sepbeta}(G)$, where $G_0\subseteq G$ is a subset with $|G_0|\le r+1$. Then $|\supp(A)|=|G_0|=r+1.$
\end{conjecture}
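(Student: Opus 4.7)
The first step is a straightforward reduction: I may assume that $\supp(A) = G_0$. Indeed, if $\supp(A) \subsetneq G_0$, then $A \in \mathcal A(\supp(A))$, and the inclusion $\mathcal B(\supp(A))_{|A|-1} \subseteq \mathcal B(G_0)_{|A|-1}$ forces $A \notin \mathsf q(\mathcal B(\supp(A))_{|A|-1})$, so $A$ is still a separating atom, now over $\supp(A)$. Hence we may replace $G_0$ by $\supp(A)$, and it remains to show $|G_0| = r+1$. I would attack this by contradiction, assuming $|G_0| \le r$.

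With the reduction in place, the plan is to reuse the machinery of the proof of Proposition \ref{main}.1. Set $s = \lfloor (r+1)/2 \rfloor$. For each $l \in \N$ with $\gcd(l, n_s) = 1$, one constructs the auxiliary zero-sum sequences $W^{(l)}$ and $V^{(l)}$ over $G_0$ exactly as in that proof. Claims \ref{W} and \ref{V} already give $|W^{(l)}|, |V^{(l)}| \ge |A|$ without invoking any Davenport equality. Combined with the lower bound on $\sepbeta(G)$ supplied by Lemma \ref{lowerbound}, any upper bound of the shape $|W^{(l)}| + |V^{(l)}| \le n_s(|G_0| + 2s - 2r) + 2\sum_{j=s+1}^r n_j$ would force the desired contradiction when $|G_0| \le r$, paralleling the end of the proof of Proposition \ref{main}.1. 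The real work is therefore to secure this inequality without the hypothesis $\mathsf D(n_sG) = \mathsf D^*(n_sG)$.

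This is precisely where the main obstacle lies. The bound $|\varphi^*(X_0^{(l)})| \le \sum_{j=s+1}^r n_j - (r-s)n_s$ used in Proposition \ref{main}.1 comes directly from $\mathsf D(n_sG) = \mathsf D^*(n_sG)$, a statement which is open for many higher-rank groups. To get around it I would try two strategies. First, exploit the fact that $X_0^{(l)}$ is a zero-sum free sequence whose support is contained in $n_sG_0 \subseteq n_sG$, a set of at most $|G_0| \le r$ elements; a refinement of the Davenport bound tailored to zero-sum free sequences with small support may be enough, since the general Davenport constant allows arbitrarily distributed support. Second, attempt an induction on $r$: the cases $r \in \{2,3,4,5\}$ are already settled (by Theorem \ref{max2}, by Proposition \ref{main}.1 together with Lemma \ref{Davenport}, and by Theorem \ref{thm:rank4}), and the quotient $n_sG$ has strictly smaller rank $r - s$, which suggests a descent once the right inductive statement is in place.

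I expect the core difficulty to be intrinsic: Conjecture \ref{r+1} appears tightly tied to the open equality $\mathsf D = \mathsf D^*$ for higher-rank abelian groups. If $\mathsf D(n_iG) = \mathsf D^*(n_iG)$ holds for all $i \ge s$, then an extension of Proposition \ref{main} covering both parities of $r$ should resolve the conjecture uniformly. Otherwise, one likely has to abandon the $W^{(l)}/V^{(l)}$ route altogether and analyze the combinatorial structure of the extremal sequence $A$ directly, in the spirit of Theorem \ref{max2}, producing an obstruction to $|\supp(A)| \le r$ that does not go through a general Davenport bound.
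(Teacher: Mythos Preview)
This statement is presented in the paper as an open \emph{conjecture}, not a theorem; the paper offers no proof of it. What the paper does prove is the conditional version in Proposition~\ref{main}.1: the conclusion holds whenever $\mathsf D(n_sG)=\mathsf D^*(n_sG)$ with $s=\lfloor (r+1)/2\rfloor$. Since $\mathsf r(n_sG)\le r-s\le 2$ for $r\le 5$, Lemma~\ref{Davenport} supplies that hypothesis and the conjecture is settled in those ranks; for $r\ge 6$ it remains open in the paper.

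Your proposal is therefore not to be compared against a proof in the paper---there is none---and indeed your text is not a proof either but an accurate diagnosis of why the problem is hard. The reduction to $\supp(A)=G_0$ is valid, and you correctly trace the sole obstruction in the $W^{(l)}/V^{(l)}$ machinery to the bounds~\eqref{X} and~\eqref{eq2}, both of which invoke $\mathsf D(n_sG)=\mathsf D^*(n_sG)$, whereas Claims~\ref{W} and~\ref{V} do not. The two escape routes you suggest are reasonable research directions but neither is known to succeed: a support-restricted Davenport bound sharp enough to recover $\mathsf D^*$ is itself open, and an inductive descent through $n_sG$ still lands on groups of rank $\ge 3$ once $r\ge 6$, where the same obstacle reappears. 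In short, you have correctly identified the core difficulty as an instance of the $\mathsf D=\mathsf D^*$ problem, which is precisely why the paper records this as a conjecture rather than a theorem.
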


If the above conjecture holds, then for any $M\in \mathcal B(G_0)$ with $|\supp(M)|\leq r$, we have
\begin{align}\label{MB}
M\in \mathsf q(\mathcal B(G_0)_{|A|-1}).
\end{align}

\section{Inverse problem of $\sepbeta(G)$ for abelian groups of rank $2$} \label{sec:invprob}
In this section, we consider the inverse problem concerning ${\sepbeta}(G)$, namely we investigate the structure of separating atoms with maximal length. 
In \cite{Sc24a,Sc25a}, the first author studied the inverse problem and got the following result.

\begin{lemma}[{\cite[Proposition 4.4]{Sc24a} and \cite[Theorem 6.2]{Sc25a}}] \label{invsh}
	Let $G=C_{n_1}\oplus \ldots\oplus C_{n_r}$ with $1<n_1\t \ldots \t n_r$ and let $A$ be a separating atom with $|\supp(A)|\le r+1$ and $|A|=\sepbeta(G)$.
	\begin{enumerate}
		\item If $n_s=\ldots=n_r$, where $s=\lfloor \frac{r+1}{2}\rfloor$, then $\ord(g)=n_r$ for every $g\in \supp(A)$.
		
		\item If $r=2$, then $|\supp(A)|=3$ and  either $\ord(g_1)=\ord(g_2)=\ord(g_3)=n_2$ or $\ord(g_1)=\ord(g_2)=n_2$, $\ord(g_3)=n_1$, where $\supp(A)=\{g_1,g_2,g_3\}$ with $\ord(g_1)\ge \ord(g_2)\ge \ord(g_3)$.
	\end{enumerate}
\end{lemma}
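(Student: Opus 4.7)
The plan for both parts of Lemma~\ref{invsh} is a two-stage argument, sharing a common first stage. Stage~1 applies Proposition~\ref{main}.1 to conclude $|\supp(A)|=r+1$. The hypothesis $\mathsf D(n_sG)=\mathsf D^*(n_sG)$ required by that proposition holds in both settings: in Part~1 the assumption $n_s=\cdots=n_r$ forces $n_i\mid n_s$ for every $i\in[1,r]$, so $n_sG=\{0\}$ and the equality is trivial; in Part~2 the group $n_1G\cong C_{n_2/n_1}$ is cyclic, so the equality is classical.

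Stage~2 for Part~2. Write $\supp(A)=\{g_1,g_2,g_3\}$ with $d_i:=\ord(g_i)$ ordered $d_1\ge d_2\ge d_3$. By Lemmas~\ref{lemain} and \ref{lowerbound}, $|A|=\sepbeta(G)=n_1/p+n_2>n_2=\exp(G)$, so $H:=\langle\supp(A)\rangle$ cannot be cyclic and must be isomorphic to $C_{a_1}\oplus C_{a_2}$ with $a_1\mid a_2$, $a_1\mid n_1$, $a_2\mid n_2$, $a_1\ge 2$. From $|A|\le\mathsf D(H)=a_1+a_2-1$ and $a_2\le n_2$ one obtains $a_1\ge n_1/p+1$; since every divisor of $n_1$ strictly exceeding $n_1/p$ equals $n_1$ (any proper divisor of $n_1$ is at most $n_1/p$ because the smallest prime of $n_1$ is $p$), we deduce $a_1=n_1$. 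A parallel number-theoretic step—split into the subcases $n_1=n_2$ (where the smallest-prime bound on $n_1$ controls the analysis) and $n_1<n_2\Rightarrow n_2\ge 2n_1$ (which immediately rules out $a_2<n_2$)—using $n_1\mid a_2\mid n_2$ and the implied bound $a_2\ge n_2-n_1(1-1/p)+1$ forces $a_2=n_2$. Hence $H=G$, and $\exp(G)=n_2$ gives $d_1=n_2$.

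Since $\ord(g_1)=\exp(G)$, the cyclic subgroup $\langle g_1\rangle$ is a direct summand of $G$, and the complementary summand is cyclic of order $n_1$ (removing the largest invariant factor from $G$ leaves the smaller one). Choose a complement generator $f$ so that $(g_1,f)$ is a basis and write $g_i=\alpha_ig_1+\beta_if$ for $i=2,3$. The condition $\sigma(A)=0$ then becomes
\[
m_1+m_2\alpha_2+m_3\alpha_3\equiv 0\pmod{n_2},\qquad m_2\beta_2+m_3\beta_3\equiv 0\pmod{n_1}.
\]
The plan for the remaining assertions $d_2=n_2$ and $d_3\in\{n_1,n_2\}$ is to assume the contrary and construct a proper zero-sum subsequence of $A$, contradicting atomicity. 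Since $g_1,g_2,g_3$ generate $G$ one has $\gcd(\beta_2,\beta_3,n_1)=1$, so the sublattice of $\mathbb{Z}^2$ cut out by $a\beta_2+b\beta_3\equiv 0\pmod{n_1}$ has index $n_1$; a Bezout-type argument (in the spirit of Lemma~\ref{ba}), adapted to whichever of $d_2<n_2$ or $d_3\notin\{n_1,n_2\}$ holds, yields a pair $(a,b)$ with $0\le a\le m_2$, $0\le b\le m_3$, $(a,b)\ne(0,0),(m_2,m_3)$. The first congruence then specifies a unique $c\in[0,n_2-1]$ with $c\equiv -a\alpha_2-b\alpha_3\pmod{n_2}$, and the length identity $m_1+m_2+m_3=n_1/p+n_2$ guarantees $c\le m_1$, so that $g_1^cg_2^ag_3^b$ is a proper zero-sum subsequence of $A$.

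For Part~1 the plan is a prime-by-prime analysis via the primary decomposition $G=\bigoplus_p G_p$: for each prime $p$ the $p$-Sylow $G_p\cong C_{n'_1}\oplus\cdots\oplus C_{n'_r}$ (with $n'_i$ the $p$-part of $n_i$) inherits the top-equal-invariants hypothesis $n'_s=\cdots=n'_r$, and the $p$-component $A_p$ of $A$ is a separating atom of $\mathcal B(G_p)$ whose length realises the $p$-adic share of $\sepbeta(G)$. Within each $G_p$ the equality $\mathsf D=\mathsf D^*$ is classical for $p$-groups, so Lemma~\ref{lemain} pins down $\sepbeta(G_p)$; the direct-summand technique of Part~2 generalises by the same bookkeeping, showing that any $g\in\supp(A_p)$ of order strictly less than $n'_r$ would be contained in a proper subgroup $G_p'<G_p$ obtained by decreasing one of the top invariant factors from $n'_r$ to $\ord(g)$, with $\mathsf D^*(G_p')<|A_p|$, which produces a shorter factorisation of $A_p$ inside $\mathcal B(\supp(A_p))$ and contradicts the separating-atom property. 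The main obstacle is the residue bookkeeping of Stage~2 Part~2: verifying that the Bezout-derived pair $(a,b)$ lifts to a valid $c\in[0,m_1]$ uniformly across all intermediate divisors $d_3\mid n_2$ with $n_1<d_3<n_2$ requires a delicate case analysis keyed to the prime factorisation of $n_2/n_1$ and to how $(\alpha_2,\alpha_3)$ sit modulo $n_2$.
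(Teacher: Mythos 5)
First, a remark on scope: the paper does not prove Lemma \ref{invsh} at all --- it is imported verbatim from \cite[Proposition 4.4]{Sc24a} and \cite[Theorem 6.2]{Sc25a} --- so there is no internal proof to compare against; your attempt has to be judged on its own. Your Stage 1 is fine (and matches how the paper itself invokes Proposition \ref{main}.1 in the proof of Theorem \ref{max2}), as is the deduction that $H=\langle \supp(A)\rangle$ is non-cyclic and, via $\mathsf D(H)=a_1+a_2-1$ and divisibility, that $H=G$. The first genuine gap comes immediately after: ``$H=G$ and $\exp(G)=n_2$ gives $d_1=n_2$'' is a non sequitur. A three-element generating set of a rank-two group need not contain an element of order $\exp(G)$: for example $C_{30}\oplus C_{30}$ is generated by elements of orders $15$, $10$ and $6$. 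The conclusion $d_1=n_2$ is exactly the kind of statement the lemma is asserting and cannot be extracted from the generation property alone; it needs the full weight of $|A|=\sepbeta(G)$ and the separating-atom condition.

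The remaining steps are not proofs but programs. For Part 2, the existence of the pair $(a,b)$ with $0\le a\le m_2$, $0\le b\le m_3$, $(a,b)\notin\{(0,0),(m_2,m_3)\}$ solving the second congruence is asserted, not established, and the claim that ``the length identity $m_1+m_2+m_3=\frac{n_1}{p}+n_2$ guarantees $c\le m_1$'' is unjustified: $c$ is only determined modulo $n_2$ and can be as large as $n_2-1$ while $m_1$ may be small, so the proposed proper zero-sum subsequence $g_1^cg_2^ag_3^b$ need not exist; you yourself flag this ``residue bookkeeping'' as an unresolved obstacle. For Part 1, the primary-decomposition reduction is broken at the outset: there is no well-defined ``$p$-component $A_p$ of $A$'' that is a separating atom of $\mathcal B(G_p)$ of length equal to a ``$p$-adic share'' of $\sepbeta(G)$ --- projecting the terms of $A$ to $G_p$ preserves the length $|A|$ and generally destroys minimality, and $\sepbeta(G)$ does not split as a sum over primes. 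So both parts rest on steps that are either false as stated or left unexecuted, and the attempt does not constitute a proof of the lemma. A correct argument along the lines actually used in \cite{Sc24a,Sc25a} (and mirrored in Proposition \ref{max} of this paper) works with the maps $\varphi$ onto quotient-type sequences, zero-sum free complements, and the $\mathsf q(\mathcal B(G_0)_{|A|-1})$ membership criterion, rather than with a direct Bezout construction of subsequences.
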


To prove Theorem \ref{max2}, we need the  following proposition.
\begin{proposition}\label{max}
	Let $G=C_{n_1}\oplus \ldots\oplus C_{n_r}$ with $1<n_1\t \ldots \t n_r$ and let $A$ be a separating atom with $|\supp(A)|\le r+1$ and $|A|=\sepbeta(G)$.
	\begin{enumerate}
		\item If $n_s=\ldots =n_{r-1}<n_r$ and $\ord(g)=n_r$ for every $g\in \supp(A)$, where $s=\lfloor \frac{r+1}{2}\rfloor$, then $\frac{n_r}{n_{r-1}}|(r-1)$.
		\item If Conjecture \ref{r+1} holds for $G$, then $g\not \in \langle g'\rangle$ for any two distinct elements $g,g'\in \supp(A)$.
	\end{enumerate}
\end{proposition}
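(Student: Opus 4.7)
I argue by contradiction. Suppose two distinct elements $g,g'\in\supp(A)$ satisfy $g'=cg$ for some $c\in[1,\ord(g)-1]$. Form the atom $R:=g'\cdot g^{\ord(g)-c}\in \mathcal{B}(G_0)$, of length at most $\ord(g)\le n_r<|A|$ by Lemma \ref{lowerbound}, so $R\in\mathcal{B}(G_0)_{|A|-1}$. Let $G_0':=G_0\setminus\{g'\}$ (of size $r$) and $m':=\mathsf v_{g'}(A)$. The element $B:=A\cdot R^{-m'}\in\mathsf q(\mathcal{B}(G_0))$ has vanishing $g'$-multiplicity and satisfies $\sigma(B)=\sigma(A)-m'\sigma(R)=0$, whence $B\in\mathsf q(\mathcal{B}(G_0'))$. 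Since $|G_0'|=r<r+1$, Conjecture \ref{r+1} implies that every separating atom over $G_0'$ has length strictly less than $\sepbeta(G)$; by \eqref{generatingsubset} this gives $\mathsf q(\mathcal{B}(G_0'))=\mathsf q(\mathcal{B}(G_0')_{|A|-1})\subseteq\mathsf q(\mathcal{B}(G_0)_{|A|-1})$. Combined with $R^{m'}\in\mathsf q(\mathcal{B}(G_0)_{|A|-1})$, the identity $A=B\cdot R^{m'}$ forces $A\in\mathsf q(\mathcal{B}(G_0)_{|A|-1})$, contradicting that $A$ is a separating atom.

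\textbf{Part 1 setup.} Set $d:=n_r/n_{r-1}$ and $H:=n_{r-1}G\cong C_d$. Since $\ord(g_i)=n_r$, the computation $\ord(n_{r-1}g_i)=n_r/\gcd(n_r,n_{r-1})=d$ shows $n_{r-1}g_i$ generates $H$; fixing a generator $h$ and writing $n_{r-1}g_i=c_ih$ with $c_i\in(\mathbb{Z}/d)^\times$, the identity $\sigma(A)=0$ yields $\sum_i c_im_i\equiv 0\pmod d$. Because $n_sG=H\cong C_d$ satisfies $\mathsf D=\mathsf D^*=d$, Proposition \ref{main}.1 (with $G_0:=\supp(A)$) gives $|\supp(A)|=r+1$. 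Applying the map $\varphi$ from the proof of Proposition \ref{main} (with base $n_s=n_{r-1}$) to $A^{n_{r-1}}$ produces the zero-sum sequence $\prod_i(c_ih)^{m_i}\in\mathcal{B}(H)$ of length $|A|$; factoring this in $\mathcal{B}(H)$ into atoms of length at most $\mathsf D(C_d)=d$ and lifting each via $\varphi^{-1}$ to $\mathcal{B}(G_0)$ of length at most $n_{r-1}d=n_r<|A|$ yields $A^{n_{r-1}}\in\mathsf q(\mathcal{B}(G_0)_{|A|-1})$. An argument parallel to Claim \ref{cl:n1}---applicable here since $\mathsf D(n_jG)=\mathsf D^*(n_jG)$ holds trivially for every $j\in[s,r]$ ($n_jG$ is cyclic of order $d$ or trivial)---gives the companion relation $A^{n_1}\in\mathsf q(\mathcal{B}(G_0)_{|A|-1})$.

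\textbf{Conclusion of Part 1 and main obstacle.} To deduce $d\mid(r-1)$ I argue by contradiction: assuming $d\nmid r-1$, the plan is to construct some $m$ coprime to $n_1$ with $A^m\in\mathsf q(\mathcal{B}(G_0)_{|A|-1})$; combined with $A^{n_1}$ in the same subgroup and Bézout, this would place $A$ itself there, contradicting the separating-atom property. The construction of $m$ adapts the $W^{(l)},V^{(l)}$ analysis of Proposition \ref{main}.2: the bound $|W^{(l)}|+|V^{(l)}|\le n_{r-1}(|G_0|+2s-2r)+2\sum_{j=s+1}^r n_j$, combined with the exact formula for $\sepbeta(G)$ from Lemma \ref{lemain} (pinning down $|A|\bmod n_{r-1}$ and $|A|\bmod d$) and the congruence $\sum_i c_im_i\equiv 0\pmod d$, should strictly sharpen either $|W^{(m)}|$ or $|V^{(m)}|$ below $|A|$ for a suitable $m$ coprime to $n_1$ precisely when $d\nmid r-1$. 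The main obstacle is this last arithmetic step: one must control simultaneously the residues $c_i\bmod d$, the multiplicities $m_i\bmod n_{r-1}$, and the atomicity of $A$---a bookkeeping more delicate than in Proposition \ref{main}.2 because the hypothesis $\ord(g_i)=n_r$ for every $i$ forces specific constraints on the $c_i$'s that must be exploited to produce the desired strict inequality.
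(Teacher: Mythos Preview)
Your Part~2 is correct and is essentially the paper's own argument, repackaged: where the paper decomposes $A=((xg_j)^{m_i}g_j^{\ord(g_j)-x_i})(Bg_j^{m_j+x_i})(g_j^{\ord(g_j)})^{-1}$, you write $A=B\cdot R^{m'}$ with $R=g'\cdot g^{\ord(g)-c}$ and $B$ supported on $G_0\setminus\{g'\}$. Both reductions rest on the same two facts: zero-sums over a cyclic group have atoms of length $\le n_r<|A|$, and Conjecture~\ref{r+1} forces every zero-sum over an $r$-element subset into $\mathsf q(\mathcal B(G_0)_{|A|-1})$.

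Your Part~1, however, is not a proof: you explicitly leave the decisive step---producing an $m$ coprime to $n_1$ with $A^m\in\mathsf q(\mathcal B(G_0)_{|A|-1})$ under the assumption $d\nmid(r-1)$---as an ``obstacle''. Worse, a direct transplant of Claim~\ref{clm:m} does not work here: with $n_s=n_{r-1}$ and the exact value of $|A|=\sepbeta(G)$ in hand, one computes $|A|\equiv 0\pmod{n_{r-1}}$ when $r$ is odd (so the construction yields $m=1$, useless) and $|A|\equiv n_{r-1}/p_1\pmod{n_{r-1}}$ when $r$ is even (so the construction yields $m=p_1$, not coprime to $n_1$). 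The hypothesis $d\nmid(r-1)$ never enters these residues, so there is no evident leverage for the B\'ezout strategy.

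The paper takes a completely different route for Part~1, with no contradiction argument and no B\'ezout. It sets $H=\bigcap_i\langle g_i\rangle$, $m=|H|$, $m^*=n_r/m$, and works with the map $\varphi$ of base $m^*$ (not $n_s$). Writing $m_i=u_im^*+x_i$ and choosing $(v_i)$ minimal with $\sigma\bigl(\prod g_i^{v_im^*-x_i}\bigr)=0$, it shows both $A_1=\prod(m^*g_i)^{u_i}$ and $\varphi(Y)=\prod(m^*g_i)^{v_i-1}$ are zero-sum free over the cyclic group $H$, forcing $\sum u_i=\sum(v_i-1)=m-1$ and $\varphi(Y)=h^{m-1}$ for a generator $h$ of $H$. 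A separate counting argument pins down $m=n_r/n_{r-1}$. The key structural step is then to prove that in the residual zero-sum sequence $\varphi(A')=\prod(m^*g_i)^{s_i}$ (where $\prod g_i^{n_r}=A\cdot V\cdot A'$) every term with $s_i\ge 1$ actually equals $h$: otherwise one could replace a block in $\varphi(Y)$ and violate the minimality of $\sum v_i$. Hence $\varphi(A')=h^{(|G_0|-2)(m-1)}$ is a zero-sum power of an element of order $m$, giving $m\mid(|G_0|-2)=r-1$ directly.
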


\begin{proof} Let $G_0=\{g_1,\ldots,g_{|G_0|}\}\subseteq G$ be a subset with $|G_0|\le r+1$ and let
 $$A=\prod_{i=1}^{|G_0|}g_i^{m_i}, \text{ where }m_i\in \N \text{ for each }i\in [1, |G_0|]\,,$$ be a separating atom with $|A|=\sepbeta(G)$.

Suppose that $n_s=\ldots=n_{r-1}<n_r$ and that $\ord(g)=n_r$ for every $g\in G_0$. Then $G = H\oplus \langle g^*\rangle \cong H\oplus C_{n_r}$ for some subgroup $H\subseteq G$ with $\exp(H)=n_{r-1}<n_r$ and some $g^*\in G$ with $\ord(g^*)=n_r$, which implies that $\langle n_{r-1}g_i\rangle \subseteq \langle g^*\rangle$ is a subgroup of order $\frac{n_r}{n_{r-1}}$ for each $i\in [1, |G_0|]$. It follows that $\langle n_{r-1}g_i\rangle = \langle n_{r-1}g^*\rangle$ for each $i\in [1,|G_0|]$.
	
Let $H=\langle g_{1}\rangle\cap\ldots\cap \langle g_{|G_0|}\rangle$. Then $H$ is a cyclic group with $\langle n_{r-1}g^*\rangle\subseteq H$, therefore
\begin{equation}\label{m}
\frac{n_r}{n_{r-1}} \quad \text{ divides }\quad  |H|\,.
\end{equation}
Let $m=|H|$, $m^*=\frac{n_r}{m}$, and let $h_j=m^* g_{j}$ for every $j\in [1,|G_0|]$. Then $\langle h_1\rangle=\ldots=\langle h_{|G_0|}\rangle=H$,  $G_1:=\{m^*g\colon g\in G_0\}\subseteq H$, and
\begin{equation}\label{m^*}
m^*\quad  \text{ divides }\quad  n_{r-1}\,.
\end{equation}
Define the map
\begin{center}
$\varphi\colon \{S\in \mathcal F(G_0)\colon m^*\t \mathsf v_g(S) \text{ for each }g\in G_0\}\rightarrow \mathcal F(G_1)$,\\
by $\varphi(\prod_{g\in G_0}g^{m^*y_g})=\prod_{g\in G_0}(m^*g)^{y_g}$,
\end{center}
where $y_g\in\mathbb{N}$ for each $g\in G_0$. For a sequence $T$ over $G_1$, let $\varphi^{-1}(T)$ denote the set of all sequences $S$ with $n_s\t \mathsf v_g(S)$ for each $g\in G_0$ such that $\varphi(S)=T$.
	
\begin{clam}\label{cl3}
Let $S\in \mathcal B(G_0)$ with $m^*\t \mathsf v_g(S)$ for each $g\in G_0$. Then $S\in \mathsf q(\mathcal B(G_0)_{|A|-1})$.
\end{clam}
\begin{proof}[Proof of \ref{cl3}]
This is similar to the proof of \ref{cl1}. The only difference is that now we have to use the inequality
$m^*\mathsf D^*(m^*G)=m^*\mathsf{D}(H)=n_r\leq\sepbeta(G)-1=|A|-1$.
\qedhere[\ref{cl3}]
\end{proof}
There exist $u_i\in \N_0$ and $x_i\in [0, m^*-1]$ such that $m_i=u_im^*+x_i$. Similarly to \ref{cl2}, there exists some $i_0\in [1, |G_0|]$ such that  $x_{i_0}\neq 0$. Since $A$ is a separating atom over $G_0$,
\begin{center}
$A_1:=\prod_{i=1}^{|G_0|}(m^*g_i)^{u_i}=\prod_{i=1}^{|G_0|}h_i^{u_i}$
\end{center}
is zero-sum free over $G_1$, so
\begin{align}\label{u}
|A_1|=\sum_{i=1}^{|G_0|}u_i\leq \mathsf {D}(H)-1=m-1.
\end{align}
Note that $m_i\le \ord(g_i)-1$. We have that
\begin{center}
    $\sigma\left(\prod_{i=1}^{|G_0|}g_i^{(m-u_i)m^*-x_i}\right)
	=\sigma\left(\prod_{i=1}^{|G_0|}g_i^{\ord(g_i)}\right)-\sigma(A)=0\,,$
\end{center}
so there exist $t_i\in [1,m-u_i]$ for each $i\in [1, |G_0|]$ such that $\sigma\left(\prod_{i=1}^{|G_0|}g_i^{t_im^*-x_i}\right)=0$. Choose a tuple $(v_1,\ldots, v_{|G_0|})\in [1,m-u_1]\times \ldots \times [1,m-u_{|G_0|}]$ with $\sigma\left(\prod_{i=1}^{|G_0|}g_i^{v_im^*-x_i}\right)=0$ such that $\sum_{j=1}^{|G_0|}v_j$ is minimal. Set
\begin{center}
    $V=\prod_{i=1}^{|G_0|}g_i^{v_im^*-x_i}\in\mathcal{B}(G_0) \quad \text{ and }\quad   Y=\prod_{i=1}^{|G_0|}g_i^{(v_i-1)m^*}\in\mathcal{F}(G_0)$.
\end{center}
\begin{clam}\label{lem:zsf}
$\varphi(Y)$ is zero-sum free over $H$.
\end{clam}
\begin{proof}[Proof of \ref{lem:zsf}]
Assume to the contrary that $\prod_{i=1}^{|G_0|}(m^*g_i)^{v_i'}$ with $v_i'\in [0, v_i-1]$ for every $i\in [1,|G_0|]$ is a nontrivial zero-sum subsequence $\varphi(Y)$. Therefore $\sum_{i=1}^{|G_0|}v_i'>0$, $v_i-v_i'\in [1,m-u_i]$ for every $i\in [1,|G_0|]$, and hence
\begin{center}
    $\sigma\left(\prod_{i=1}^{|G_0|}g_i^{(v_i-v_i')m^*-x_i}\right)
	=\sigma(V)-\sigma\left(\prod_{i=1}^{|G_0|}(m^*g_i)^{v_i'}\right)=0\,,$
\end{center}
a contradiction to the minimality of $\sum_{i=1}^{|G_0|}v_i$. So $\varphi(Y)$ is zero-sum free over $H$. \qedhere[\ref{lem:zsf}]
\end{proof}

It follows from \ref{lem:zsf} that
\begin{align}\label{v}
|\varphi(Y)|=\sum_{i=1}^{|G_0|}(v_i-1)\leq \mathsf {D}(H)-1=m-1.
\end{align}
Since $v_i\in [1,m-u_i]$ for every $i\in [1,|G_0|]$, we have that
\begin{align}\label{AVAt}\notag
\prod_{i=1}^{|G_0|}g_i^{\ord(g_i)}=AVA',\text{ where}\\
A'=\prod_{i=1}^{|G_0|}g_i^{m^*s_i}\in \mathcal B(G_0)
\end{align}
with each $s_i=m -u_i-v_i\in [0,m-1]$. By \ref{cl3}, we have  $AV=\prod_{i=1}^{|G_0|}g_i^{(u_i+v_i)m^*}\in \mathsf q(\mathcal B(G_0)_{|A|-1})$. Since $A$ is a separating atom, it follows that $|V|\ge |A|$, and then
\begin{align}\label{2A}
2|A|\le |A|+|V|=\sum_{i=1}^{|G_0|}(u_i+v_i)m^*.
\end{align}
\begin{clam}\label{cl4}
We have $\sum_{i=1}^{|G_0|}u_i=\sum_{i=1}^{|G_0|}(v_i-1)=m-1$.
\end{clam}

\begin{proof}[Proof of \ref{cl4}]
If $\sum_{i=1}^{|G_0|}u_i\leq m-2$ or $\sum_{i=1}^{|G_0|}(v_i-1)\leq m-2$, then since $n_s=\ldots =n_{r-1}<n_r$ and $\ord(g)=n_r$ for every $g\in G_0$, \eqref{u}, \eqref{2A}, \eqref{v}, \eqref{m^*}, and  $|G_0|\leq r+1$ yields that
\begin{center}
    $2|A|\le \sum_{i=1}^{|G_0|}(u_i+v_i)m^*\leq m^*(2m-3+|G_0|)\leq 2n_r+m^*(r-2)\leq 2n_r+(r-2)n_{r-1}.$
\end{center}
Letting $p_1$ be the minimal prime divisor of $n_1$, it follows from Lemma \ref{lowerbound} that
\begin{center}
$n_r+\frac{r-2}{2}n_{r-1}\geq |A|={\sepbeta}(G)\ge
\begin{cases}
n_r+\frac{r-1}{2}n_{r-1},&\mbox{ if }r \mbox{ is odd}, \\
n_r+\frac{r-2}{2}n_{r-1}+\frac{n_{r-1}}{p_1},&\mbox{ if }r\mbox{ is even},
\end{cases}$
\end{center}		
a contradiction. Thus $\sum_{i=1}^{|G_0|}u_i\ge m-1$ and  $\sum_{i=1}^{|G_0|}(v_i-1)\ge m-1$, so we are done by \eqref{u} and \eqref{v}. \qedhere[\ref{cl4}]
\end{proof}	

\begin{clam}\label{cl:m}
We have $\frac{n_r}{n_{r-1}}=m.$
\end{clam}
\begin{proof}[Proof of \ref{cl:m}]
By \eqref{m}, we have that $\frac{n_r}{n_{r-1}}\t m$. Assume for contradiction that  $\frac{n_r}{n_{r-1}}<m$.  Then $\frac{n_r}{n_{r-1}}\leq \frac{m}{2}$ and so $m^*\le \frac{n_{r-1}}{2}$. Combining (\ref{2A}), (\ref{u}), (\ref{v}), and $|G_0|\leq r+1$ yields that
\begin{center}
$2|A|\le \sum_{i=1}^{|G_0|}(u_i+v_i)m^*\leq m^*(2m-1+r)\leq 2n_r+m^*(r-1)\leq 2n_r+\frac{r-1}{2}n_{r-1}.$
\end{center}		
Letting $p_1$ be the minimal prime divisor of $n_1$, it follows from Lemma \ref{lowerbound} that
\begin{center}
$n_r+\frac{r-1}{4}n_{r-1}\geq |A|={\sepbeta}(G)\ge
\begin{cases}
n_r+\frac{r-1}{2}n_{r-1},&\mbox{ if }r \mbox{ is odd}, \\
n_r+\frac{r-2}{2}n_{r-1}+\frac{n_{r-1}}{p_1},&\mbox{ if }r\mbox{ is even},
\end{cases}$
\end{center}
a contradiction. So $\frac{n_r}{n_{r-1}}=m$. \qedhere[\ref{cl:m}]
\end{proof}

\begin{clam}\label{cl:hzs}
$\varphi(A')=h^{(|G_0|-2)(m-1)}$ for some $h\in H$ with $\ord(h)=m$.
\end{clam}
\begin{proof}[Proof of \ref{cl:hzs}]
By \ref{cl4} and  (\ref{AVAt}), we have
\begin{align}\label{tlow}
|\varphi(A')|=\sum_{i=1}^{|G_0|}s_i=\sum_{i=1}^{|G_0|}(m -u_i-v_i)=(|G_0|-2)(m-1).
\end{align}
By \ref{cl4} and \eqref{v}, $\varphi(Y)$ is zero-sum free over $H$ with $|\varphi(Y)|=m-1=\mathsf D(H)-1$. It follows that there exists $h\in H$ with $\ord(h)=m$ such that
\begin{center}
$\varphi(Y) =h^{m-1}.$
\end{center}
We show that $m^*g_{i}\neq h$ for each $s_i\ge 1$. Assume for contradiction that there exists $i_0\in [1,|G_0|]$ with $s_{i_0}\ge 1$ such that $m^*g_{i_0}=xh$ for some $x\in [2, m-1]$. Observe that
\begin{center}
$\varphi(YA')=\varphi\left(\prod_{i=1}^{|G_0|}g_i^{(v_i-1)m^*}\boldsymbol{\cdot}\prod_{i=1}^{|G_0|}g_i^{m^*s_i}\right) =h^{m-1}\prod_{i=1}^{|G_0|}(m^*g_i)^{s_i}\in \mathcal F(H)$,	
\end{center}
so $T=(xh)h^{m-x-1}$ is a
subsequence of $\varphi(YA')$ with sum $\sigma((xh)h^{m-x-1})=-h=\sigma(Y)$. Therefore there exists a subsequence $S=\prod_{i=1}^{|G_0|} g_i^{m^*v_i'}$ of $YA'=\prod_{i=1}^{|G_0|} g_i^{m^*(m-u_i-1)}$ with $\varphi(S)=T$. Then we have:
\begin{itemize}
    \item $v_1',\ldots, v_{|G_0|}'\in [0, m-u_i-1]$, so $(v_1'+1,\ldots, v_{|G_0|}'+1)\in [1,m-u_1]\times \ldots \times [1,m- u_{|G_0|}]$,
    \item $\sum_{i=1}^{|G_0|}(v_i'+1)=m-x+|G_0|<m-1+|G_0|=\sum_{i=1}^{|G_0|}v_i$ (by \ref{cl4}),
    \item $\sigma\left(\prod_{i=1}^{|G_0|} g_i^{m^*(v_i'+1)-x_i}\right)=\sigma\left(\prod_{i=1}^{|G_0|} g_i^{m^*v_i'}\cdot\prod_{i=1}^{|G_0|} g_i^{(m^*v_i-x_i)-m^*(v_i-1)}\right)=\sigma(SVY^{-1})=\sigma(\varphi(S))+0-\sigma(Y)=\sigma(T)-\sigma(Y)=0$.
\end{itemize}
So $\prod_{i=1}^{|G_0|} g_i^{m^*(v_i'+1)-x_i}$ is zero-sum sequence contradicting the minimality of $\sum_{i=1}^{|G_0|}v_i$. Therefore  $m^*g_{i}=h$ for each $s_i\ge 1$, and since $\sum_{i=1}^{|G_0|}s_i=(|G_0|-2)(m-1)$, we are done.
\qedhere[\ref{cl:hzs}]
\end{proof}

Now we are ready to show the main assertions.

1. Suppose that $n_s=\ldots=n_{r-1}<n_r$ and that $\ord(g)=n_r$ for every $g\in G_0$. It follows from \ref{cl:hzs} the existence of a zero-sum sequence $\varphi(A')=h^{(|G_0|-2)(m-1)}$ with $\ord(h)=m$. Moreover, $m=\frac{n_r}{n_{r-1}}$ by \ref{cl:m} and $|G_0|=r+1$ by Proposition \ref{main}.1. So $\frac{n_r}{n_{r-1}}$ divides $r-1$.

2. Assume to the contrary that there exist distinct $i, j\in [1,|G_0|]$ such that $g_i\in \langle g_j\rangle$.
	Suppose $g_i=xg_j$, $A=g_i^{m_i}g_j^{m_j}B=(xg_j)^{m_i}g_j^{m_j}B$, and $xm_i\equiv x_i \mod \ord(g_j)$, for some  $x, x_i\in [0,\ord(g_j)-1]$, and $B\in \mathcal F(G_0\setminus \{g_i,g_j\})$. Thus $$A=((xg_j)^{m_i}g_j^{\ord(g_j)-x_i})(Bg_j^{m_j+x_i})(g_j^{\ord(g_j)})^{-1}.$$
	Since $(xg_j)^{m_i}g_j^{\ord(g_j)-x_i}$ is a product of minimal zero-sum subsequences over $\langle g_j\rangle$ and each minimal zero-sum subsequence has length at most  $\ord(g_j)<|A|$, we have that $$(xg_j)^{m_i}g_j^{\ord(g_j)-x_i}\in \mathsf q(\mathcal B(G_0)_{|A|-1}).$$
	Note that $|\supp(Bg_j^{m_j+x_i})|=|G_0|-1\leq r$, so by Conjecture \ref{r+1} and (\ref{MB}) we have $$Bg_j^{m_j+x_i}\in \mathsf q(\mathcal B(G_0)_{|A|-1}).$$
	Since $\ord(g_j)<|A|$, we have $g_j^{\ord(g_j)}\in \mathsf q(\mathcal B(G_0)_{|A|-1}).$
	Therefore $$A\in \mathsf q(\mathcal B(G_0)_{|A|-1}),$$ contradicting that $A$ is a separating atom.
	This completes the proof.
\end{proof}

\begin{proof}[Proof of Theorem \ref{max2}]
Since $\mathsf{r}(G)=2$, we have $s:=\left\lfloor\frac{\mathsf{r}(G)+1}{2}\right\rfloor=1$, which implies that $n_sG$ is cyclic and hence $\mathsf D(n_sG)=\mathsf D^*(n_sG)$ by Lemma \ref{Davenport}. It follows from Proposition \ref{main}.1 that $|\supp(A)|=3$, say $\supp(A)=\{g_1,g_2,g_3\}$ with $\ord(g_1)\ge \ord(g_2)\ge \ord(g_3)$, whence Conjecture \ref{r+1} holds for $G$. By Proposition \ref{max}.2, we have $g_i\not \in \langle g_j\rangle$ for any two distinct indexes $i,j\in [1,3]$.
	
	 It remains to show $\ord(g_1)=\ord(g_2)=n_2$ and $\ord(g_3)=n_1$.
	If $n_1=n_2$, then the assertion follows from Lemma \ref{invsh}.1.
	Suppose $n_1<n_2$. Assume to the contrary that the assertion fails. Then Lemma \ref{invsh}.2 implies that $\ord(g_1)=\ord(g_2)=\ord(g_3)=n_2$. It follows from  Proposition \ref{max}.1  that $1<\frac{n_r}{n_{r-1}}|(r-1)=1$, a contradiction.
\end{proof}

\section*{Acknowledgement}
The authors thank Alfred Geroldinger for sharing his thoughts on the topic of the manuscript and the referee for very valuable recommendations.

\providecommand{\bysame}{\leavevmode\hbox to3em{\hrulefill}\thinspace}
\providecommand{\MR}{\relax\ifhmode\unskip\space\fi MR }
\providecommand{\MRhref}[2]{%
	\href{http://www.ams.org/mathscinet-getitem?mr=#1}{#2}
}

\end{document}